\def\bs{\backslash}
\def\cS{\mathcal{S}}
\def\cI{\mathcal{I}}
\def\bs{\backslash}
\newtheorem{theorem}{Theorem}[section]
\numberwithin{equation}{section}
\newtheorem{lemma}[theorem]{Lemma}
\newtheorem{proposition}[theorem]{Proposition}
\newtheorem{corollary}[theorem]{Corollary}
\newtheorem{remark}[theorem]{Remark}
\numberwithin{equation}{section}
\def\N{\mathbb{N}}
\def\Z{\mathbb{Z}}
\def\R{\mathbb{R}}
\def\C{\mathcal{C}}
\def\V{\mathcal{V}}
\def\A{\mathcal{A}}
\def\cR{\mathcal{R}}
\def\cO{\mathcal{O}}
\def\CC{\mathcal{C}}
\def\cF{\mathcal{F}}
\def\cE{\mathcal{E}}
\def\bE{\mathbb{E}}
\def\bP{\mathbb{P}}
\renewcommand{\phi}{\varphi}
\renewcommand{\epsilon}{\varepsilon}
\def\K{\mathcal{K}}
\def\F{\mathcal{F}}
\def\G{\mathcal{G}}
\def\RR{\mathcal{R}}
\def\C{{\mathcal C}}
\newcommand{\1}{{\text{\Large $\mathfrak 1$}}}
\newcommand{\var}{\operatorname{var}}
\renewcommand{\emptyset}{\varnothing}
\def\reff#1{(\ref{#1})}
\newcommand{\tn}{|\kern-.1em|\kern-0.1em|}
\newcommand{\cp}{\mathrm{Cap}}
\newcommand{\cc}[1]{\mathrm{Cap}\left(#1\right)}
\def\bP{\mathbb{P}}
\def\tilde{\widetilde}
\begin{document}
\title{\bf Moderate deviations for the range of a transient random walk. II}

\author{Amine Asselah \thanks{
LAMA, Univ Paris Est Creteil, Univ Gustave Eiffel, UPEM, CNRS, F-94010, Cr\'eteil, France; amine.asselah@u-pec.fr} \and
Bruno Schapira\thanks{Aix-Marseille Universit\'e, CNRS, Centrale Marseille, I2M, UMR 7373, 13453 Marseille, France;  bruno.schapira@univ-amu.fr} 
}
\date{}
\maketitle
\begin{abstract}
We obtain sharp upper and lower bounds for 
the moderate deviations of the volume of the range of a 
random walk in dimension five and larger. 
Our results encompass two regimes: 
a Gaussian regime for small deviations, and a 
stretched exponential regime for larger deviations.
In the latter regime, we show that 
conditioned on the moderate deviations event,
the walk folds a small part of its range in a ball-like subset.  
Also, we provide new path properties, in dimension three as well.
Besides the key role Newtonian capacity plays in this study,
we introduce two original ideas, of general interest, 
which strengthen the approach developed in \cite{AS}.
\\

\noindent \emph{Keywords and phrases.} Random Walk, Range, Large deviations, Moderate deviations, Capacity. \\
MSC 2010 \emph{subject classifications.} Primary 60F05, 60G50.
\end{abstract}

\section{Introduction}\label{sec-intro}
\paragraph{General overview}
In this paper, we consider a classical problem in probability theory:
the moderate deviations for the range of a simple
random walk. Curiously enough, moderate deviations estimates in dimension
five and larger are missing. Nothing is known neither on the path
properties of a walk conditioned on {\it moderately squeezing} its range.
This motivates the present work. We extend the large deviations 
analysis of \cite{AS} up to the Gaussian regime. Our approach 
also sheds light on the path properties in dimension three.

We denote by $\{S_n\}_{n\in\N}$ the simple random walk on $\Z^d$, $d\ge 3$. 
Its range is the process $n\mapsto \cR_n:=
\{S_0,\dots,S_n\}$ which records the visited 
sites as time evolves, and we denote by $|\cR_n|$ the number
of sites in $\cR_n$. In 1951, Dvoretzky and
Erd\"os \cite{DE} prove a strong law of large numbers
in dimension three and larger, with an almost sure limit
\begin{equation}\label{SLLN}
\frac 1n |\cR_n| \longrightarrow \gamma_d,  \qquad
\text{with}\qquad \gamma_d>0.
\end{equation}
They express the range as a sum over newly visited sites,
and obtain $\gamma_d$ as the probability of not 
returning to 0. 

The deviation of $|\cR_n|$ below its mean has
a long history. Kac and Luttinger in 1973 \cite{KL} discuss
the problem of evaluating the expectation of $\exp(- |\cR_n|)$ in their
study of Bose-Einstein condensation in the presence of impurities.
The first order asymptotics of $\bE[\exp(- |\cR_n|)]$
are derived a few years later
by Donsker and Varadhan in two celebrated works \cite{DV-75} and
\cite{DV-79}. The polymer measure obtained by tilting the random
walk law by $\exp(- |\cR_n|)$ is shown by Bolthausen \cite{B-94} to correspond to
confining the random walk in a ball of radius $n^{1/(d+2)}$ and
to leave no hole in the covered region, in $d=2$. Sznitman \cite{Sz91} shows in $d=2$ as well the confinement of the Wiener sausage, the continuous counterpart of the walk, 
and Povel \cite{Po} extends the proof of confinement to all dimensions $d\ge 3$. Recently a more
precise statement 
for the range has been obtained in
$d\ge 3$ independently by Ding, Fukushima, Sun and Xu \cite{DKSX} and Berestycki and Cerf 
\cite{BC}. The most relevant regime for us is that of Large
Deviations. Many studies of either the Wiener sausage or the random
walk culminate with a large deviation principle for the Wiener sausage 
in dimensions $d\ge 3$, proved 
by van den Berg, den Hollander and Bolthausen \cite{BBH01}. The result which was adapted by Phetpradap \cite{Phet} to the random walk setting reads as follows:  
for any $b>0$, if $\bP$ is the law of the walk,
\begin{equation}\label{BBH01}
\lim_{n\to\infty} \frac{1}{n^{(d-2)/d}} \cdot
\log \bP\big( |\cR_n|-\bE[|\cR_n|]< -b n\big)=-\cI^d(b),
\end{equation}
where $\cI^d(b)>0$ for any $b>0$, 
and $\cI^d$ has a variational expression in terms
of the occupation density of the Brownian motion. This suggests
that the walk is localized a time of order $n$ in a ball-like 
region of volume of order $n$. 
This suggests also an occupation density of order 1,
and the presence of {\it holes} of side-length 1.
This picture has been
popularized under the name {\it Swiss cheese} \cite{BBH01}, 
and is conjecturally linked with the model of Random Interlacements, see \cite{Sz-19}.

\paragraph{Main results.}
We first estimate the probability of deviation in $d\ge 5$. 
\begin{theorem}\label{theo.d5}
Assume $d\ge 5$. There exist positive 
constants $\epsilon$, $\underline \kappa_d$ and $\overline \kappa_d$, 
such that for any $n\ge 2$, and $ n^{\frac{d}{d+2}}\cdot (\log n)^{\frac{2d}{d^2-4}} \le \zeta \le \epsilon n$, 
\begin{equation}\label{bounds-d5}
\exp\left(-\underline \kappa_d\, \zeta^{1-\frac 2d} \right) 
\le \bP\left(|\RR_n|-\bE[|\RR_n|] \le -\zeta\right) 
\le \exp\left(-\overline \kappa_d \, \zeta^{1-\frac 2d} \right).
\end{equation}
\end{theorem}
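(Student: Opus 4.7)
The statement requires matching upper and lower bounds on the probability that $|\cR_n|$ falls $\zeta$ below its mean. I would prove the two bounds separately. The lower bound comes from an explicit folding construction (the walk is forced to collapse a piece of its trajectory into a small ball). The upper bound builds on the capacity-based decomposition introduced in \cite{AS} for large deviations, but must be pushed down to the much smaller scales $\zeta \ll n$ allowed in the moderate regime.

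\textbf{Lower bound.} I would take $R=c\,\zeta^{1/d}$ and $T=\lfloor \zeta/(2\gamma_d)\rfloor$ for a small constant $c>0$, and introduce the event
$$\Lambda=\{S_t\in B(0,R)\text{ for all }t\le T\}.$$
The standard principal Dirichlet eigenvalue bound for the simple random walk on a discrete ball yields
$$\bP(\Lambda)\ge \exp(-c_1 T/R^2)=\exp\bigl(-c_2\,\zeta^{1-2/d}\bigr),$$
which is exactly the target scale. On $\Lambda$ the range during $[0,T]$ is at most $|B(0,R)|\le c_3\zeta$, so (choosing $c$ so small that $c^{d}\le 1/8$) it is reduced by at least $\gamma_d T-c_3\zeta\ge \zeta/3$ compared to its unconfined mean. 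Applying the Markov property at time $T$ and invoking the concentration estimate $|\cR_{n-T}|=\gamma_d(n-T)+O(\sqrt{n\log n})$ (valid in $d\ge 5$ and harmless since the assumption $\zeta\ge n^{d/(d+2)}(\log n)^{2d/(d^2-4)}$ forces $\sqrt{n\log n}=o(\zeta)$), one concludes that with positive conditional probability $|\cR_n|\le \bE[|\cR_n|]-\zeta/4$.

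\textbf{Upper bound and main obstacle.} This is the heart of the theorem. The starting point is a decomposition, implicit in \cite{AS}, of the form
$$\bE[|\cR_n|]-|\cR_n|=\sum_{i=1}^{\ell} D_i+(\text{lower order terms}),$$
obtained after splitting $[0,n]$ into $\ell=n/L$ blocks of length $L$, where $D_i$ measures the overlap of the $i$-th piece with the rest of the trajectory. The crucial analytic input is a capacity-type tail estimate
$$\bP(D_i\ge t)\le \exp\bigl(-c_4\,t^{1-2/d}\bigr),$$
reflecting the fact that to realize an overlap of size $t$ the block essentially has to fold into a region of volume $\sim t$, which has Newtonian capacity $\sim t^{1-2/d}$. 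The length $L$ is then chosen so as to balance the Gaussian fluctuations of $\sum_i D_i$ against the stretched-exponential deviations: the threshold $\zeta\sim n^{d/(d+2)}$ is precisely the crossover, which also explains the logarithmic correction in the admissible range for $\zeta$. The hard part is that the $D_i$ are strongly correlated, so a naive Chernoff bound loses the sharp exponent. I therefore expect the real difficulty to lie in a decoupling/conditioning scheme that reduces $\sum_i D_i$ to an effectively independent sum while keeping the per-block exponent $t^{1-2/d}$; this is presumably where the two new ideas promised in the abstract come in, most likely in the form of a refined sub-Gaussian vs capacity dichotomy for sub-pieces of the walk. I would invest most of the effort in establishing these two ingredients before assembling the final Chernoff estimate.
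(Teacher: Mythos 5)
Your construction is essentially the paper's: you confine an initial piece of the walk of time-length $\Theta(\zeta)$ inside a ball of volume $\Theta(\zeta)$, pay a confinement cost $\exp(-c\,\zeta/\zeta^{2/d})=\exp(-c\,\zeta^{1-2/d})$, and on the confinement event the excess deficit $\sim\zeta$ is produced by construction, after which the remaining piece behaves typically. The paper does the same, phrased via $m=\lfloor 3\zeta/\gamma_d\rfloor$ and the event $\cE=\{\RR_m\subseteq Q(0,\zeta^{1/d})\}$, then uses the CLT for $|\RR[m,n]|$ and the $O(1)$ intersection $|\RR_m\cap\RR[m,n]|$. Your account omits the intersection term and states the fluctuation as $O(\sqrt{n\log n})$ rather than $O(\sqrt n)$ (the variance is $O(n)$ in $d\ge 5$), but these are minor. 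The lower bound is fine.

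\textbf{Upper bound: a genuine gap, and a reason your proposed route would not close it.} You correctly identify the shape of the problem — split $[0,n]$ into blocks, observe that the deficit is driven by the overlap terms $D_i$ between a block and the rest of the trajectory, and aim for a per-block stretched-exponential tail $\bP(D_i\ge t)\le\exp(-c\,t^{1-2/d})$ combined with a decoupling/Chernoff argument — and you are honest that the crux is in ``decoupling'' and in proving such a tail. But this sketch hides two specific difficulties which the paper resolves by a different mechanism, and which your route as stated would likely not overcome at the sharp exponent.

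First, even granting the i.i.d.\ tail bound (which is a nontrivial result, Theorem~\ref{prop.tworanges}, established in a companion paper), a Nagaev-type bound for a sum of $\ell$ i.i.d.\ variables with $\exp(-c\,t^{1-2/d})$ tails gives, at threshold $\zeta$, a bound of order $\exp(-c\,(\zeta/\ell_{\mathrm{scales}})^{1-2/d})$ once one also controls the several nested scales that are needed to make the summands independent (the dyadic cascade in \eqref{UBd5-1} introduces an extra factor $L\asymp\log\zeta$ in the number of scales). This is exactly what the paper does in Section~\ref{sec-gauss} for the \emph{Gaussian} regime $\zeta\ll n^{d/(d+2)}$, where such a polylog loss is affordable because the dominant term is Gaussian. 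In the \emph{moderate} regime $\zeta\ge n^{d/(d+2)}(\log n)^{2d/(d^2-4)}$ the stretched-exponential term is dominant, and losing a power of $\log$ inside the exponent is fatal: you would obtain only $\exp(-c\,\zeta^{1-2/d}/(\log\zeta)^{1-2/d})$, not $\exp(-\overline\kappa_d\,\zeta^{1-2/d})$.

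Second, and more fundamentally, the paper does not prove the moderate-deviation upper bound by summing per-block tails at all. It first transfers the problem to a ``corrector'' (Corollary~\ref{cor-trans}): with the time-scale $T\asymp\zeta^{2/d}$,
\[
\bP\big(|\RR_n|-\bE[|\RR_n|]\le-\zeta\big)
\le 2T\,e^{-c\zeta/T}
+\bP\!\left(\sum_{k=0}^n\sum_{x\in\cR_k}\tfrac1T G_T(x-S_k)\ge c\zeta\right),
\]
where the first term already matches $\exp(-c\,\zeta^{1-2/d})$. The key enabling ingredient is Proposition~\ref{prop-LD-transfer.general}: for an adapted bounded sequence $\{X_i\}$, the upward deviation of $\sum X_i$ is dominated (up to $\exp(-c\zeta)$) by that of $\sum\bE[X_i\mid\F_{i-T}]$. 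This sidesteps the decoupling problem entirely, with no assumption on correlations or variances. Then the corrector is controlled by a multi-scale analysis of local-time level sets: for densities $\rho_i=2^{-i+1}$ and associated scales $r_i$ fixed by $\rho_i r_i^{d-2}=C_0\log n$, one studies the sets $\K_n(r_i,\rho_i)$ of times with abnormally dense neighborhoods, and Theorem~\ref{theo-Kn} gives $\bP(|\K_n(r,\rho)|\ge L)\le C\exp(-\kappa\,\rho^{2/d}L^{1-2/d})$ with \emph{no} combinatorial prefactor. That last point is what makes the constant sharp, and it rests on the capacity-extraction Lemma~\ref{prop-capa}: from any configuration of $m$ separated cubes one can extract a subconfiguration of only $\asymp m^{1-2/d}$ cubes whose capacity is already of the maximal order $\asymp r^{d-2}m^{1-2/d}$, which collapses the entropy of the centers into the exponential. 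Neither of these two devices appears in your outline; without them, both the decoupling and the entropy counting obstruct the sharp bound. So the upper bound as you propose it would not establish \eqref{bounds-d5} for the full range of $\zeta$.
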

\begin{remark}\emph{We believe that the logarithm that 
appears in the hypotheses on $\zeta$ for \eqref{bounds-d5} 
are artefacts of our proofs. 
}
\end{remark} 

Recall that a Central Limit Theorem in dimension $5$ 
and higher was proved by Jain and Orey \cite{JO} for $|\cR_n|$. 
In particular,
they show that the variance of $|\cR_n|$ divided by $n$ converges to
$\sigma^2>0$. We extend their result into a
Moderate Deviation principle.
\begin{theorem}\label{theo-gauss}
Assume $d\ge 5$.
For  any sequence $\{\zeta_n\}_{n\ge 0}$, satisfying $\lim_{n\to \infty} \zeta_n/\sqrt n = \infty$, and $\lim_{n\to \infty} 
\zeta_n (\log n)^{\frac{d-2}{d+2}} /  n^{\frac{d}{d+2}} = 0$, we have
\begin{equation}\label{limit-gauss}
\lim_{n\to \infty} \frac{n}{\zeta_n^2} \cdot 
\log \bP\left(\pm (|\RR_n|-\bE[|\RR_n|])  > \zeta_n\right) 
=   -\frac{1}{2\sigma^2}. 
\end{equation}
\end{theorem}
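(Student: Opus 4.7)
The plan is to bridge the central limit theorem of Jain--Orey~\cite{JO} (asserting $\operatorname{var}(|\cR_n|)/n\to\sigma^2$) with the large-deviation regime of Theorem~\ref{theo.d5} via a classical block-independence argument. Partition $\{0,\ldots,n\}$ into $K=\lfloor n/m\rfloor$ consecutive blocks of length $m=m_n\to\infty$, denote by $\cR^{(i)}$ the range of the $i$-th block, and decompose
$$|\cR_n|-\bE[|\cR_n|] \;=\; \underbrace{\sum_{i=1}^K\bigl(|\cR^{(i)}|-\bE[|\cR_m|]\bigr)}_{=:\,\Sigma_n} \;-\; \bigl(\Delta_n-\bE[\Delta_n]\bigr),$$
with $\Delta_n:=\sum_{i=2}^K\bigl|\cR^{(i)}\cap\bigcup_{j<i}\cR^{(j)}\bigr|$. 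The first term is an i.i.d.\ sum of centred variables, each bounded by $m$, with per-block variance $\sigma_m^2\sim m\sigma^2$, so $\operatorname{var}(\Sigma_n)\sim n\sigma^2$ and $\Sigma_n$ carries the Gaussian fluctuations. The block length $m$ is chosen so that $\lambda m\to 0$ (where $\lambda=\zeta_n/(n\sigma^2)$ is the tilting parameter) while keeping the centred overlap $\Delta_n-\bE[\Delta_n]$ negligible at scale $\zeta_n$; the upper constraint on $\zeta_n$ in the hypothesis is precisely what ensures such a window for $m$ exists.

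\textbf{Lower bound.} Applied to the i.i.d.\ sum $\Sigma_n$, the classical moderate deviations principle for bounded centred variables yields, for any fixed $\eta>0$ and all large $n$,
$$\bP\bigl(\Sigma_n\ge(1+\eta)\zeta_n\bigr)\;\ge\;\exp\!\left(-(1+2\eta)\,\frac{\zeta_n^2}{2n\sigma^2}\right).$$
To transfer this to $|\cR_n|-\bE[|\cR_n|]$, it suffices that $|\Delta_n-\bE[\Delta_n]|=o(\zeta_n)$ with probability exceeding $\exp(-(1+3\eta)\zeta_n^2/(2n\sigma^2))$. In dimension $d\ge 5$ one has $\sum_x G(0,x)^2<\infty$ with $G$ the Green's function, which renders the expected intersection of two independent walks finite; the capacity estimates of \cite{AS} upgrade this to sharp bounds on $\bE[\Delta_n]$ and $\operatorname{var}(\Delta_n)$. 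A Chebyshev bound then provides the required negligibility and completes the lower bound.

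\textbf{Upper bound.} Apply exponential Chebyshev with $\lambda=\zeta_n/(n\sigma^2)$ and Cauchy--Schwarz to split the MGF:
$$\bP\bigl(|\cR_n|-\bE[|\cR_n|]\ge\zeta_n\bigr)\;\le\;e^{-\lambda\zeta_n}\,\sqrt{\bE\bigl[e^{2\lambda\Sigma_n}\bigr]\cdot\bE\bigl[e^{-2\lambda(\Delta_n-\bE[\Delta_n])}\bigr]}.$$
For the first factor, independence and Taylor expansion (valid because $\lambda m\to 0$) give $\log\bE[e^{2\lambda\Sigma_n}]=K\log\bE[e^{2\lambda(|\cR_m|-\bE[|\cR_m|])}]=(1+o(1))\cdot 2\lambda^2 n\sigma^2$. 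For the second, we need $\log\bE[e^{-2\lambda(\Delta_n-\bE[\Delta_n])}]=o(\lambda\zeta_n)$, i.e.\ uniform exponential moment control of $\Delta_n$. This is the most delicate step and is achieved through the Newton-potential/capacity estimates developed in \cite{AS}, strengthened by the two new ideas announced in the abstract. Optimising yields $\log\bP(\cdot)\le-(1-o(1))\zeta_n^2/(2n\sigma^2)$, and the lower-tail case $-(|\cR_n|-\bE[|\cR_n|])>\zeta_n$ is handled symmetrically.

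\textbf{Main obstacle.} The crux is the uniform exponential moment estimate for $\Delta_n-\bE[\Delta_n]$, to relative error $o(1)$ in the exponent, as $K=n/m$ diverges with~$n$: the straightforward bound $\bE[\Delta_n]=O(K^2)$ is too weak for the whole range of $\zeta_n$, and finer capacity-based estimates controlling both the mean and the higher moments of the pairwise intersections are needed. The upper endpoint $\zeta_n\ll n^{d/(d+2)}(\log n)^{-(d-2)/(d+2)}$ in the hypothesis is precisely the scale at which the quadratic term of the cumulant expansion of $|\cR_n|$ ceases to dominate the cubic corrections, i.e.\ the crossover into the stretched-exponential regime of Theorem~\ref{theo.d5}.
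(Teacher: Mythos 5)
Your overall strategy---decompose $|\cR_n|$ into independent blocks, show the cross-terms are negligible, and transfer a moderate deviation principle from the i.i.d.\ block sum---matches the paper's in spirit, and the choice of block length $m\asymp n/\zeta_n$ and tilting parameter $\lambda=\zeta_n/(n\sigma^2)$ is the correct scaling. However, there are several concrete gaps.

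\textbf{Chebyshev is insufficient for the lower bound.} You ask for $|\Delta_n-\bE[\Delta_n]|=o(\zeta_n)$ to hold with probability exceeding $\exp(-(1+3\eta)\zeta_n^2/(2n\sigma^2))$, and propose Chebyshev. Even granting $\var(\Delta_n)=O(K)$ with $K=n/m\asymp\zeta_n$, Chebyshev yields only $\bP(|\Delta_n-\bE[\Delta_n]|>\epsilon\zeta_n)\lesssim 1/\zeta_n$, which is polynomially small. But once $\zeta_n\gg\sqrt{n\log n}$, the target probability $\exp(-c\zeta_n^2/n)$ decays faster than any polynomial of $n$, so the complement $\Delta_n$-event is not negligible. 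You need a bound on $\bP(|\Delta_n-\bE[\Delta_n]|>\epsilon\zeta_n)$ that is $o(1)$ on the scale $\exp(-\zeta_n^2/n)$, i.e.\ an exponential-type estimate. The paper achieves exactly this: it combines Nagaev's theorem for sums of variables with stretched-exponential tails (Theorem~\ref{theo-nagaev}) with the bound $\bE[\exp(\kappa|\cR_\infty\cap\widetilde\cR_\infty|^{1-2/d})]<\infty$ from~\cite{AS20b} (Theorem~\ref{prop.tworanges}), proving the cross-terms are negligible in the full exponential sense
$\limsup_{n\to\infty}\frac{n}{\zeta_n^2}\log\bP(\pm\sum Y\ge\delta\zeta_n)=-\infty$. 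Once that is in place, G\"artner--Ellis applied to the i.i.d.\ sum gives the matching upper and lower bounds simultaneously, avoiding your split into two separate arguments.

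\textbf{The flat decomposition destroys the independence that makes Nagaev applicable.} Your $\Delta_n=\sum_{i\ge 2}|\cR^{(i)}\cap\bigcup_{j<i}\cR^{(j)}|$ is a sum of correlated terms, because each summand involves the union of all previous blocks. The paper instead iterates \eqref{king-2} dyadically (as in~\eqref{UBd5-1}); at each dyadic level $\ell$, the intersection terms $|\cR_{2i-1}^\ell\cap\cR_{2i}^\ell|$ for different $i$ are i.i.d., each distributed as the intersection of two independent ranges. This structure is precisely what allows direct application of Nagaev's i.i.d.\ deviation theorem, and a union bound over the $\approx\log n$ levels handles the full sum. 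Without this structure, your ``uniform exponential moment control'' of $\Delta_n$ is not a routine step.

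\textbf{Misattribution of tools and of the constraint on $\zeta_n$.} The ``two new ideas announced in the abstract'' (extraction of a high-capacity subset, and the transfer lemma for adapted sums) are developed for the stretched-exponential regime of Theorem~\ref{theo.d5}; they play no role in the Gaussian regime. What is used here is Theorem~\ref{prop.tworanges} plus Le Gall's estimate that the fourth centred moment of $|\cR_m|$ is $O(m^2(\log m)^2)$, the latter controlling the cubic remainder in the Taylor expansion of the block moment generating function. Moreover, the upper constraint $\zeta_n(\log n)^{(d-2)/(d+2)}/n^{d/(d+2)}\to 0$ is not the scale at which cubic cumulants overtake quadratic ones (that crossover happens much later); it is exactly what guarantees that the stretched-exponential Nagaev bound $\exp(-c(\zeta_n/\log n)^{1-2/d})$ for the cross-terms decays faster than the target $\exp(-\zeta_n^2/(2n\sigma^2))$.
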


\begin{remark}\label{rem-d3}
\emph{
In $d=3$, \cite[Theorem 8.5.3]{Chen} 
proves a moderate deviations principle up to the Gaussian regime: 
for $\{\zeta_n\}_{n\in \N}$ with 
$\lim_{n\to \infty} \zeta_n/(\sqrt n\cdot \log^{3/4}(n))=\infty$, 
we have
\begin{equation}\label{Xia-1}
\lim_{n\to\infty} \frac{n^{1/3}}{\zeta_n^{2/3}} \cdot
\log \bP\big( |\cR_n|-\bE[|\cR_n|]< -\zeta_n\big)=-\cI_1,
\end{equation}
with an explicit positive constant $\cI_1$. On the other hand, 
if $\lim_{n\to \infty} \zeta_n/( \sqrt n\cdot \log^{3/4}(n))=0$, 
\cite[Theorem 8.5.2]{Chen} shows that the deviations are Gaussian: 
\begin{equation}\label{Xia-2}
\lim_{n\to\infty} \frac{n\cdot \log n}{\zeta_n^{2}} \cdot
\log \bP\big( \pm (|\cR_n|-\bE[|\cR_n|])>\zeta_n\big)=-\cI_2,
\end{equation}
with an explicit positive constant $\cI_2$.
Note that the variance of $|\cR_n|$ is of order 
$n\log n$ in $d=3$, by \cite{JP}. 
Moderate deviations for the range in $d=2$
are analyzed in \cite{BCR} and \cite{Chen}. }

\emph{Our simple method does
provide rougher estimates that we present since our pathwise analysis
only depends on them. There exist positive constants $\epsilon$, 
$\underline \kappa$ and $\overline \kappa$, 
such that for any $n\ge 2$, 
and $n^{5/7}\cdot \log n \le \zeta \le \epsilon n$,
\begin{equation}\label{theo.d3}
\exp\left(-\underline \kappa\, (\zeta^2/n)^{1/3}  \right) 
\le \bP\left(|\RR_n|-\bE[|\RR_n|] \le -\zeta\right) 
\le \exp\left(-\overline \kappa \, (\zeta^2/n)^{1/3} \right).
\end{equation}
For an explanation of why our techniques cannot reach the Gaussian 
regime, and more precisely for the reason of this exponent $5/7$, 
see Remark \ref{rem.5/7}. }
\end{remark}

We provide now a description of a 
typical trajectory conditioned on the deviations. 
First, recall that the capacity of a set $A\subseteq \Z^d$ is defined by 
\begin{equation}\label{cap.def}
\cp(A):=\sum_{x\in A} \bP_x(S_n\not\in A,\ \forall n\ge 1).
\end{equation}
Also, given $\Lambda\subseteq \Z^d$, and $n\ge 0$, 
let $\ell_n(\Lambda)$ be the time spent in $\Lambda$ before time $n$. 
Finally, we introduce the following notation: 
for $r>0$, and $x\in \Z^d$, set  
$$
Q(x,r):=[x-\frac r2,x+\frac r2)^d\cap \Z^d,
$$
and for $\rho \in (0,1]$, and $r,n$ positive integers, we let 
\begin{equation}\label{def-CV}
\C_n(r,\rho):= \{x\in r\Z^d\, :\, \ell_n(Q(x,r))\ge \rho r^d\},
\qquad\text{and }\qquad
\V_n(r,\rho):=\bigcup_{x\in \C_n(r,\rho)} Q(x,r).
\end{equation}
In words, $\V_n(r,\rho)$ is the region of space where the walk realizes 
an {\it occupation density} above $\rho$ on a {\it space-scale} $r$.
Define also
for a sequence of values of deviation $\{\zeta_n\}_{n\in \N}$, 
\begin{eqnarray*}
\rho_{\textrm{typ}} := \left\{ 
\begin{array}{ll}
\zeta_n/n & \text{if }d=3\\
1 & \text{if }d\ge 5 
\end{array}
\right. 
\quad 
\tau_{\textrm{typ}}:= \left\{ 
\begin{array}{ll}
n & \text{if }d=3\\
\zeta_n & \text{if }d\ge 5
\end{array}
\right. 
\quad \text{and}\quad  
\chi_d:= \left\{ 
\begin{array}{ll}
5/7 & \text{if }d=3\\
\frac d{d+2} & \text{if }d\ge 5. 
\end{array}
\right. 
\end{eqnarray*}
\begin{theorem}\label{theo-path}
Assume $d=3$, or $d\ge 5$. There are positive constants $\alpha$, 
$\beta$, $\epsilon$, and $C_0$, such that for any 
sequence  $\{\zeta_n\}_{n\in \N}$ satisfying 
\begin{equation}\label{cond-pathwise}
n^{\chi_d}\cdot \log n \le \zeta_n\le \epsilon n,
\end{equation}
defining $\{r_n\}_{n\in \N}$ by 
\begin{equation}\label{def.rn}
r_n^{d-2}\rho_{\textrm{typ}}= C_0\log n, 
\end{equation}
one has 
\begin{equation}\label{result-pathwise}
\lim_{n\to \infty} \bP\left(
\ell_n(\V_n(r_n,\beta\rho_{\textrm{typ}}))
\ge \alpha\, \tau_{\textrm{typ}} \mid |\RR_n|-\bE[|\RR_n|]
\le -\zeta_n\right) = 1.
\end{equation}
Moreover, for some constant $A>0$, 
\begin{equation}\label{result-capacity}
\lim_{n\to \infty} \bP\left(
\cp(\V_n(r_n,\beta\rho_{\textrm{typ}})) 
\le A |\V_n(r_n,\beta\rho_{\textrm{typ}})|^{1-\frac 2d} 
\mid |\RR_n|-\bE[|\RR_n|] \le -\zeta_n\right) = 1. 
\end{equation}
\end{theorem}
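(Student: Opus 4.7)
My plan is to derive both \eqref{result-pathwise} and \eqref{result-capacity} from the moderate-deviation bounds of Theorem \ref{theo.d5} (or \eqref{theo.d3} in $d=3$) supplemented by \emph{restricted} deviation estimates tracking the interaction of the walk with $\V_n:=\V_n(r_n,\beta\rho_{\textrm{typ}})$. The starting point is the elementary identity $n+1-|\cR_n|=\sum_{x\in\Z^d}(L_n(x)-1)_+$ with $L_n(x):=|\{k\le n:S_k=x\}|$, which turns the conditioning event $\{|\cR_n|-\bE[|\cR_n|]\le-\zeta_n\}$ into an ``excess visits'' event $\{Y_n-\bE[Y_n]\ge\zeta_n\}$ for $Y_n:=n+1-|\cR_n|$. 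Splitting by whether $x\in\V_n$ gives
\begin{equation*}
Y_n=\sum_{x\in\V_n}(L_n(x)-1)_+\;+\;\sum_{x\notin\V_n}(L_n(x)-1)_+,
\end{equation*}
the first sum being bounded by $\ell_n(\V_n)$, exactly the quantity we wish to control in \eqref{result-pathwise}.

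For \eqref{result-pathwise}, I would complement this decomposition with a stretched-exponential upper tail on the contribution from outside $\V_n$. By definition of $\V_n$, every box $Q(x,r_n)$ with $x\notin\C_n(r_n,\beta\rho_{\textrm{typ}})$ carries occupation density below $\beta\rho_{\textrm{typ}}$, which keeps the second moment of the local time there comparable to that of a free walk. A box-by-box Jain--Orey type variance estimate, combined with a union bound over admissible supports of $\V_n$ (the scale in \eqref{def.rn} is tuned precisely so that the counting entropy stays subdominant against $\exp(-\overline\kappa_d\,\zeta_n^{1-2/d})$), should yield
\begin{equation*}
\bP\Big(\sum_{x\notin\V_n}(L_n(x)-1)_+-\bE\Big[\sum_{x\notin\V_n}(L_n(x)-1)_+\Big]\ge(1-\alpha)\zeta_n\Big)=o\big(\bP(|\cR_n|-\bE[|\cR_n|]\le-\zeta_n)\big).
\end{equation*}
Combined with the assumption $\ell_n(\V_n)<\alpha\tau_{\textrm{typ}}$, this contradicts $Y_n-\bE[Y_n]\ge\zeta_n$ up to a negligible probability, delivering \eqref{result-pathwise}.

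For \eqref{result-capacity}, the strategy is potential-theoretic. Introducing the event $\mathcal{E}_A=\{\cp(\V_n)>A|\V_n|^{1-2/d}\}$, I would use a last-exit decomposition of excursions of the walk into $\V_n$, together with a change of measure replacing the excursion entrance law by a multiple of the equilibrium measure of $\V_n$, to show that the natural probability cost of spending time $\tau_{\textrm{typ}}$ in a fixed $V$ is of order $\exp(-c\,\cp(V)\,\tau_{\textrm{typ}}/|V|^{2/d})$. Optimizing over $|V|$ under the constraint that the range deficit exceeds $\zeta_n$ saturates the isocapacitary inequality $\cp(V)\asymp|V|^{1-2/d}$ and matches exactly the lower rate of Theorem \ref{theo.d5}; any value of $A$ bounded away from this saturation produces a strictly larger rate, so $\mathcal{E}_A$ intersected with the conditioning event becomes negligible once $A$ is large enough, which is \eqref{result-capacity}.

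The principal obstacle is the stretched-exponential refinement of the variance estimate for the contribution from outside $\V_n$: a naive Chebyshev delivers only $\exp(-c\zeta_n^2/n)$, which beats the target rate $\exp(-\overline\kappa_d\zeta_n^{1-2/d})$ only in the Gaussian regime, so one must genuinely exploit the low-density structure of the complement uniformly in all admissible $\V_n$'s. This is presumably where the ``two original ideas'' of general interest advertised in the abstract enter, likely via a capacity-based coupling controlling the walk excursions between dense boxes and their complement, and propagating \cite{AS} beyond its original range of applicability.
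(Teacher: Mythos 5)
Your starting decomposition $n+1-|\cR_n|=\sum_x(L_n(x)-1)_+$ and the split between $\V_n$ and its complement is a reasonable-looking entry point, but it is not the route the paper takes, and as written it has a genuine gap that you yourself flag but do not resolve. The obstruction is quantitative. Even if one grants a Bernstein-type bound of the form $\exp(-c\zeta_n^2/n)$ for the contribution from outside a \emph{fixed} candidate $\V_n$, the union bound over the possible supports of $\V_n$ does not close. The set $\V_n$ is a union of cubes of side $r_n$ with $r_n^{d}\asymp(\log n)^{d/(d-2)}$ in $d\ge 5$, and a set $\V_n$ carrying $\Theta(\tau_{\mathrm{typ}})=\Theta(\zeta_n)$ units of local time involves up to $\Theta(\zeta_n/r_n^{d})$ cubes chosen out of $\Theta((n/r_n)^d)$ positions, for an entropy of order $\exp\!\bigl(\Theta(\zeta_n (\log n)^{-2/(d-2)})\bigr)$. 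You would need this entropy to be beaten by the gain $\exp(-c\zeta_n^2/n+\underline\kappa_d\zeta_n^{1-2/d})$, i.e.\ essentially $\zeta_n\gtrsim n/(\log n)^{2/(d-2)}$, which covers only a thin window near $\zeta_n\asymp n$, not the full range $n^{d/(d+2)}\log n\le\zeta_n\le\epsilon n$. So "the scale in \eqref{def.rn} is tuned precisely so that the counting entropy stays subdominant" is not true as stated; the tuning in \eqref{def.rn} serves to make the density $\rho_{\mathrm{typ}}$ detectable at scale $r_n$, not to tame the union bound.

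The paper's resolution of exactly this entropy problem is Lemma~\ref{lem-cap} (and Proposition~\ref{prop-cap}): from any candidate dense set one extracts a much smaller subset of cardinality $\asymp|\CC|^{1-2/d}$ whose capacity is already of the right order, and one union-bounds only over these small subsets, whose entropy $\exp(\cO(\zeta_n^{1-2/d}\log n))$ is absorbed into the exponential under hypothesis~\eqref{hyp.r}. This is what produces Theorem~\ref{theo-Kn}, and the actual proof of \eqref{result-pathwise} in the paper does not go through $(L_n(x)-1)_+$ at all: it first transfers the conditioning to the corrector $\sum_k\sum_{x\in\cR_k}\frac1T G_T(x-S_k)$ via Corollary~\ref{cor-trans}, then shows via Theorem~\ref{theo-Kn} that conditionally some $|\hat\K_i|$ with $i\le I$ must be of order $L_i$ (otherwise Proposition~\ref{prop.up.5} would force the corrector to be small), and finally converts that statement about $\K_n$ into a statement about $\ell_n(\V_n)$ via Proposition~\ref{prop.Kn.path}. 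None of these steps appears in your proposal, and the step you describe as "presumably where the two original ideas enter" is in fact the missing load-bearing argument, not a finishing touch.

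For \eqref{result-capacity} the paper does not give a potential-theoretic excursion argument at all: once \eqref{result-pathwise} is established it simply invokes the external result \cite{AS20a} (Theorem 1.5, Eq.\ (1.15)), which says that the part of space where the walk spends a macroscopic amount of time at density $\rho$ has capacity of order its volume to the power $1-2/d$. Your sketch of a last-exit/change-of-measure argument points in a sensible direction but would need to be made precise (in particular the claimed cost $\exp(-c\,\cp(V)\,\tau_{\mathrm{typ}}/|V|^{2/d})$ for a \emph{fixed} $V$ is essentially Proposition~\ref{prop.cap.old}, and again the union bound over $V$ is the crux).
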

Theorem~\ref{theo-path} provides some information on 
the density the random walk has to realize in order to achieve
the deviation. We obtain that $\V_n(r_n,\rho_n)$ is typically ball-like, 
in the sense that its capacity is of the order of its volume 
to the power $1-2/d$, as it is the case for Euclidean balls.

This result also reflects a very different phenomenology in
dimension three and in dimensions larger than four.
It is easier for this purpose
to switch to the language of polymer. There, $S_i$ is the position
of the $i$-th monomer, and $n$ is the total length. Squeezing
the range of the walk is now called {\it folding the polymer}. 
\begin{itemize}
\item In dimension three, the walk spends a fraction of its
total time, independently of $\zeta_n$, in a region of
volume of order $n^2/\zeta_n$.  
Note in particular that the volume of the confinement
region is a decreasing function of $\zeta_n$.

\item In dimensions $d\ge 5$, the density of the
confinement region is of order 1, no matter what
$\zeta_n$ is. Also, note that the region where the walk
is confined has volume of order $\zeta_n$, which increases
with $\zeta_n$. Thus, a very tiny fraction of the monomers folds.

\item 
Note also that the scale $r_n$ measures the distance at which
we can probe density, and the condition \reff{cond-pathwise}
is the smallest one can hope, up to constant. Indeed, for any constant $c$ 
small enough, the simple random walk typically spends a time larger than  
$cr^2\log n$ in order $n/(r^2\log n)$ cubes of side-length $r$ (recall that the probability to spend a time $t$ in a cube of side $r$ is of order $\exp(-t/r^2)$, ignoring constants in the exponential). 
So a density $\rho = cr^{2-d}\log n$ is typical in this respect, and \reff{cond-pathwise} simply requires 
$\rho_{\textrm{typ}}$ being larger than this typical density without
constraint.

\item
Let us quote \cite{BBH01}: {\it " The central limit theorem is controlled by
the local fluctuations, while the [..]  deviations are controlled by
the global fluctuations"}. Thus, we establish folding which is somehow
a signature of global fluctuations. Note that in a recent preprint,
on a related problem, Sznitman \cite{Sz-19} performs a {\it decomposition of
random interlacements into "wavelet" and "undertow" components,
which carry "local" versus "longer range" information}. This is
also present in the approach of \cite{AS}, and is here expressed for instance in Corollary \ref{cor-trans} and its proof (see notably the standard decomposition 
\eqref{dec.fluct}).

\item In dimension $4$, by following our approach,
we could show the following. For any $n\ge 2$, and $n^{2/3}\cdot 
\log n \le \zeta \le \epsilon n$, 
\begin{equation*}
\exp\left(-\underline \kappa\, \sqrt{\zeta} \right) 
\le \bP\left(|\RR_n|-\bE[|\RR_n|] \le -\zeta\right) 
\le \exp\left(-\overline \kappa \, 
\frac{\sqrt{\zeta}}{\log (n/\zeta)} \right),
\end{equation*}
with $\underline \kappa$ and $\overline \kappa$ two positive constants. 
It remains open to remove the $\log$, 
and most interestingly to give some information on the optimal scenario. 
\end{itemize}

\paragraph{New Tools.}
To put in perspective the present ideas, let us
describe the approach of \cite{AS}. 
The key object with which we measure folding is a set of
monomers $\K_n(r,\rho)$, parametrized by a space-scale $r$ and
a density $\rho$, which collects the monomers' labels around which,
on a scale $r$, the density is above $\rho$. In other words,
\begin{equation}\label{def-Kn}
\K_n(r,\rho):=\{k\le n:\ \ell_n(Q(S_k,r))\ge \rho\cdot r^d\}.
\end{equation}
Typically, a random walk has most sites surrounded by a density of order
$r^{2-d}$ in a cube of side-length $r$. 
Thus, folding occurs when observing an abnormally
large value of $|\K_n(r,\rho)|$, with $\rho r^{d-2}$ large. 
Our main tool is the following
deviation result which weakens the assumption of
Lemma 2.4 from \cite{AS}, and is key in tackling the
moderate deviations up to the Gaussian regime.
\begin{theorem}
\label{theo-Kn}
There exist positive constants $C_0$ and $\kappa$, 
such that for any $\rho>0$, $r\ge 1$, and $n\ge 2$, satisfying  
\begin{equation}\label{hyp.r}
\rho\cdot r^{d-2} \ge C_0\log n,
\end{equation} 
one has for any $L\ge 1$, 
\begin{equation*}
\mathbb P\big(|\K_n(r,\rho)|\ge L\big)\, 
\le \, C \exp\left(-\kappa\, \rho^{\frac 2d}\, L^{1-\frac 2d} \right).
\end{equation*}
\end{theorem}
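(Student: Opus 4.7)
My approach would be to reduce the random neighbourhoods $Q(S_k,r)$ to grid cubes in $r\Z^d$, apply a Khas'minskii-type estimate for the occupation time of a deterministic set (exploiting the isocapacitary inequality on $\Z^d$), and then absorb the combinatorial entropy of the random set $\V_n(r,c\rho)$ via the hypothesis $\rho\, r^{d-2}\ge C_0\log n$ with $C_0$ chosen large.

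As a first step, each cube $Q(S_k,r)$ meets at most $2^d$ grid cubes of $r\Z^d$, at least one of which lies within distance $r$ of $S_k$ and carries occupation time $\ge 2^{-d}\rho r^d$. After a harmless enlargement of the scale $r$ by a constant, this yields the containment
\[
\bigl\{|\K_n(r,\rho)|\ge L\bigr\}\ \subset\ \bigl\{\ell_n(\V_n(r,c\rho))\ge L\bigr\}
\]
for some $c=c(d)>0$; since the target exponent $\rho^{2/d}L^{1-2/d}$ is insensitive to constant factors in $\rho$ and $r$, it is enough to bound the right-hand event. Next, for any finite $A\subset\Z^d$ with $d\ge 3$, the Green's function bound $G(x,y)\lesssim |x-y|^{2-d}$ together with rearrangement onto a ball of volume $|A|$ give $\sup_y\bE_y[\ell_\infty(A)]\le C|A|^{2/d}$, and Khas'minskii's exponential inequality then yields
\[
\sup_y\mathbb{P}_y\bigl(\ell_n(A)\ge L\bigr)\ \le\ 2\exp\!\bigl(-c\, L\, |A|^{-2/d}\bigr).
\]
Were $\V_n(r,c\rho)$ deterministic, the density bound $|A|\le L/(c\rho)$ would immediately deliver the exponent $\rho^{2/d}L^{1-2/d}$.

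The obstacle, which I expect to be the crux, is that $\V_n(r,c\rho)$ is itself random. I would address this by a union bound stratified by $N:=|\C_n(r,c\rho)|$. A classical exit-time estimate confines the walk inside a box of side $C\sqrt{n\log n}$ outside an event of probability $n^{-\omega(1)}$, restricting the number of candidate grid cubes to $M\le n^{O(1)}$. For each fixed collection $V$ of $N$ cubes, the inclusion $V\subset\V_n(r,c\rho)$ forces $\ell_n(V)\ge c\rho Nr^d$, which by the previous step occurs with probability at most $\exp(-c\rho\, N^{1-2/d} r^{d-2})$. The enumeration cost is at most of order $N\log n$, and the hypothesis $\rho\, r^{d-2}\ge C_0\log n$ absorbs this entropy into the exponent provided $N$ stays bounded. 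The delicate regime is $N$ large, where the gain is only sublinear in $N$ while the naive entropy is linear. To recover a linear gain I would order the high-density cubes along the trajectory and use a multiscale/chaining argument: at a coarse time scale of order $r^2$ each successive cube has only $O(1)$ admissible neighbours, so sequences of $N$ successive high-density cubes carry entropy linear in $N$ with no logarithmic factor, which is compatible with the subexponential tail $\exp(-c\rho N^{1-2/d} r^{d-2})$ under the standing hypothesis. Summing dyadically over $N$ then yields the stated estimate.
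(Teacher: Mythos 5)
Your first two steps are consistent with the paper: reducing $Q(S_k,r)$ to grid cubes at a comparable density (cf. Proposition \ref{prop.Kn.path}), and using a Khas'minskii/capacity estimate which, for a \emph{fixed} configuration of $N$ cubes, gives $\exp(-c\,\rho\,N^{1-2/d}r^{d-2})$; this is essentially Proposition~\ref{prop.cap.old} specialized to the isocapacitary lower bound $\cp(\Lambda)\gtrsim|\Lambda|^{1-2/d}$. You also correctly identify the crux: the entropy of the unknown configuration grows linearly in $N$, while the exponential gain is only sublinear, $N^{1-2/d}$.

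The gap is in your proposed fix. Chaining along the trajectory at time scale $r^2$ can at best replace the entropy $N\log n$ by $cN$, but linear entropy still beats the sublinear gain $\rho\, N^{1-2/d}r^{d-2}$ as soon as $N\gg(\rho\,r^{d-2})^{d/2}$, and under the standing hypothesis $\rho\, r^{d-2}\ge C_0\log n$ this threshold is only polylogarithmic in $n$, whereas $N$ may be as large as $n/(\rho r^d)$. Worse, the ball-like packing of $N$ cubes, for which the capacity bound is tight and the gain is genuinely $\asymp N^{1-2/d}$, is precisely the configuration the walk prefers (Theorem~\ref{theo-path}), so there is no hope that a sharper probability bound rescues the linear union bound. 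In short, any enumeration that pays per cube fails.

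What the paper does instead (Lemma~\ref{lem-cap}) is the missing idea: from any separated configuration $\C$ of $N$ cubes one can \emph{extract a sublinear subset} $U\subset\C$ of size $\asymp N^{1-2/d}$ whose union of cubes already has capacity of order $r^{d-2}|U|\asymp r^{d-2}N^{1-2/d}$. One then applies the capacity bound of Proposition~\ref{prop.cap.old} to $U$ alone: the gain is unchanged (still $\rho\, r^{d-2}N^{1-2/d}$), but the entropy is now only $|U|\log n\asymp N^{1-2/d}\log n$, which is absorbed by the hypothesis $\rho\,r^{d-2}\ge C_0\log n$. This sublinear extraction, not a smarter enumeration of the full configuration, is what makes the union bound close. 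You would also need the two-sided truncation $\K_n^*(r,\rho)$ (local time between $\rho r^d$ and $2\rho r^d$) and the dyadic sum over densities in the proof of Theorem~\ref{theo-Kn}: the upper truncation is what guarantees a configuration of at least $\asymp L/(\rho r^d)$ separated cubes, without which the relation between $L$ and $N$ in your stratification is not pinned down.
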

Note that as mentioned earlier the condition \eqref{hyp.r} is optimal, up to the constant. 

The heart of our approach relies on estimating
the probability a random walk realizes a certain density
in a certain number of balls of a fixed radius. In
the regime of moderate deviation, the number of possible
balls is often larger than the reciprocal of the probability
of filling a fixed configuration of balls. 
We reduce the complexity of the set
of centers with the following idea (see Lemma \ref{lem-cap} 
below for a more general and formal statement). 

\begin{lemma}\label{prop-capa}
In any set $\Lambda\subset \Z^d$, there is a subset $U$, whose
capacity is maximal (of order its volume), and with volume of order 
$|\Lambda|^{1-2/d}$.
\end{lemma}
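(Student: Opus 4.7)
The plan is to build $U$ as a greedy $r$-separated subset of $\Lambda$ at a carefully chosen scale $r$, and then derive the matching lower bound on the capacity from the variational principle applied to a dyadic estimate of the Green's function. Setting $n=|\Lambda|$, I would fix $r$ of order $n^{2/d^2}$, tuned so that $n/r^d\asymp n^{1-2/d}$, which is the target size for $|U|$.

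The construction step is greedy: take $U_0 \subset \Lambda$ a maximal $r$-separated subset, so that any two distinct points of $U_0$ are at distance at least $r$ and every point of $\Lambda$ is within distance $r$ of some point of $U_0$. Maximality forces $\Lambda \subset \bigcup_{u\in U_0} Q(u,2r)$, so $n\le C_1 |U_0| r^d$ gives $|U_0| \gtrsim n^{1-2/d}$; trimming if needed, one can assume $|U|$ is of the prescribed order $n^{1-2/d}$, and $U$ inherits the $r$-separation. The upper bound $\cp(U)\le |U|$ is automatic, so the only remaining task is the matching lower bound $\cp(U)\gtrsim|U|$.

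For this I would prove $\sum_{y\in U,\,y\ne x} G(x,y)\le C$ uniformly in $x\in U$. Decompose $U\setminus\{x\}$ into dyadic annuli $A_j=\{y:|x-y|\in [r 2^j, r 2^{j+1})\}$. Combining $G(x,y)\lesssim|x-y|^{-(d-2)}$ with the $r$-separation count $|U\cap A_j|\le C 2^{dj}$ yields $Cr^{-(d-2)}\, 2^{2j}$ per annulus, which on its own sums to a divergent geometric series; the crucial trick is that beyond the crossover scale $2^{dj^\star}\asymp |U|$ one must replace the separation count by the trivial bound $|U|$, producing a geometric sum with ratio $2^{-(d-2)}<1$. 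Both regimes contribute at most $O(|U|^{2/d}/r^{d-2})$, which is $O(1)$ for our choice of $r$.

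Granting this sum bound, the dual variational formula $\cp(U)=\sup\{\nu(U):\mathrm{supp}\,\nu\subseteq U,\ G\nu\le 1\text{ on }U\}$ applied to $\nu=\mathbf{1}_U/C_0$ immediately yields $\cp(U)\ge |U|/C_0$, and the proof is complete. The main obstacle is exactly the balancing of the two dyadic regimes in the Green's function sum; everything else amounts to careful book-keeping of the implicit constants in terms of $d$.
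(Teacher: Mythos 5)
Your proof is correct and follows essentially the same approach as the paper: pick a well-separated subset at scale $r\asymp |\Lambda|^{2/d^2}$, and derive the capacity lower bound by controlling $\sup_x\sum_{y\in U}G(x,y)$. The paper's Lemma~\ref{lem-cap} does exactly this, choosing $R=\lfloor r|\CC|^{2/d^2}\rfloor$ and extracting recursively points whose $R/2$-cubes are disjoint, then invoking the last-exit identity \eqref{cap.Lambda.time} (which is the same duality as your variational inequality $\nu(U)\le\cp(U)$ for $G\nu\le 1$ on $U$). Two small remarks. First, where the paper simply asserts the bound $Cr^2 + C\,\frac{r^d}{R^{d-2}}|U|^{2/d}$ on the occupation-time sum, your dyadic decomposition into annuli, with the crucial crossover at $2^{dj^\star}\asymp|U|$ beyond which the packing count must be replaced by the trivial bound $|U|$, spells out explicitly why that $|U|^{2/d}$ appears; this is the right justification and is a helpful clarification. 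Second, the paper's formal statement is actually about the capacity of $\bigcup_{x\in U}Q(x,r)$ for a given cube side $r$, which is the version needed in Proposition~\ref{prop-cap}, whereas you prove the literal point-set version of Lemma~\ref{prop-capa}; both are true and the proof transfers verbatim (replace $G(x,y)$ by $\sum_{z\in Q(y,r)}G(z-x)$ and the target $O(1)$ by $O(r^2)$), so this is a matter of which variant one wants, not a gap.
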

Let us emphasize that the notion of capacity plays a key role here,
even though it does not appear in the statement of Theorem \ref{theo-Kn}.

From any set $\Lambda$ we extract a much smaller subset 
as hard to cover, and this subset has maximal capacity.
Lemma \ref{prop-capa} is useful, when we combine it with an older
estimate from \cite{AS} (see Proposition \ref{prop.cap.old} below) 
which states that the probability of filling a region on
a given scale, and at a given density $\rho$, is bounded
by exponential of minus its capacity times $\rho$.

The second idea is as elementary and deals with 
deviations for the sum of an adapted process. An instance of the result that we use is as follows (see Proposition \ref{prop-LD-transfer.general} for a more general statement):
\begin{proposition}\label{prop-LD-transfer}
There exists a positive constant $c$ such that
for any sequence $\{X_i\}_{i\in \N}$ of nonnegative random variables
adapted to a filtration $\{\F_i\}_{i\in \N}$, and almost surely bounded
by $1$, it holds for any $n\ge 1$ and any $\zeta>0$, 
\begin{equation}\label{main-transfer}
\bP\left(\sum_{i=1}^n X_i>\zeta\right)\le
\bP\left(\sum_{i=1}^n \bE[X_i \mid \F_{i-1}] >c 
\zeta\right)+\exp(-c\zeta).
\end{equation}
\end{proposition}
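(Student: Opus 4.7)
The plan is to reduce the statement to a classical Bennett--Freedman-type inequality for the fluctuations of $\sum X_i$ around its predictable compensator
\begin{equation*}
A_n := \sum_{i=1}^n \bE[X_i \mid \F_{i-1}].
\end{equation*}
Writing $\{\sum X_i > \zeta\} = (\{\sum X_i > \zeta\}\cap\{A_n \le c\zeta\}) \cup \{A_n > c\zeta\}$, the second piece is exactly the term on the right of \eqref{main-transfer}. So the task will be to bound the first piece by $\exp(-c\zeta)$ for a suitably small absolute constant $c>0$.

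The key ingredient I would use is the elementary convexity bound $e^{\lambda x} \le 1 + (e^\lambda-1)x$, valid for all $x \in [0,1]$ and $\lambda \ge 0$. Conditioning on $\F_{i-1}$ and using $X_i \in [0,1]$, this yields
\begin{equation*}
\bE\!\left[e^{\lambda X_i} \mid \F_{i-1}\right] \le 1 + (e^\lambda-1)\bE[X_i \mid \F_{i-1}] \le \exp\!\big((e^\lambda-1)\bE[X_i \mid \F_{i-1}]\big).
\end{equation*}
Telescoping, the process
\begin{equation*}
M_n := \exp\!\left(\lambda \sum_{i=1}^n X_i - (e^\lambda-1)A_n\right)
\end{equation*}
is a nonnegative supermartingale with $\bE[M_0]=1$.

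I would then fix $\lambda=1$ and choose $c \in (0, 1/(e-1))$ small enough that $1-(e-1)c \ge c$ (say $c = 1/(2(e-1))$). On the event $\{\sum X_i > \zeta\}\cap\{A_n \le c\zeta\}$, the exponent in $M_n$ exceeds $(1-(e-1)c)\zeta \ge c\zeta$, so Markov's inequality and $\bE[M_n]\le 1$ give
\begin{equation*}
\bP\!\left(\sum_{i=1}^n X_i > \zeta,\ A_n \le c\zeta\right) \le e^{-c\zeta}.
\end{equation*}
Combining with the complementary event $\{A_n > c\zeta\}$ yields exactly \eqref{main-transfer}.

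There is no serious obstacle: the statement is a direct application of the exponential supermartingale technique. The only delicate point is the calibration of the constant $c$, which must be chosen simultaneously so that (i) the shift $(e^\lambda-1)c\zeta$ produced by the compensator is dominated by $\lambda\zeta$, and (ii) the residual rate $\lambda - (e^\lambda-1)c$ is at least $c$, so that both terms on the right-hand side of \eqref{main-transfer} enjoy the same exponent $c\zeta$. The argument extends verbatim, with an additional scaling, to the more general situation envisaged in Proposition \ref{prop-LD-transfer.general}.
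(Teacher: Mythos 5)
Your proof is correct and uses essentially the same mechanism as the paper: the exponential supermartingale $M_n=\exp(\lambda\sum X_i-(e^\lambda-1)A_n)$ you construct is, at $\lambda=1$, identical to the one implicit in the paper's Lemmas \ref{lem-trans1}--\ref{lem-trans2}, since the paper's constant $\kappa^*=1+g(1)$ with $g(t)=(e^t-1-t)/t^2$ equals $e-1$, which is exactly your $(e^\lambda-1)$ at $\lambda=1$. The only cosmetic differences are that you derive the one-step bound $\bE[e^{\lambda X_i}\mid\F_{i-1}]\le\exp((e^\lambda-1)\bE[X_i\mid\F_{i-1}])$ from the convexity inequality $e^{\lambda x}\le 1+(e^\lambda-1)x$ rather than via the Taylor remainder $g$, and that you apply Markov's inequality to $M_n$ directly on the event $\{\sum X_i>\zeta,\ A_n\le c\zeta\}$ rather than factoring the argument through the paper's Lemma \ref{lem-trans3} (the latter is only needed to handle the $T$-block averaging in Proposition \ref{prop-LD-transfer.general}; for $T=1$ it collapses to your step).
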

Thus, without any condition on the variance of the $\{X_i\}_{i\in \N}$,
the upward deviation of their sum is comparable to that
of a conditioned sum, up to a multiplicative error. 

\paragraph{Plan of the paper.}
Section~\ref{sec-notation} introduces notation, and recall
useful known results.
Section~\ref{sec-tools} highlights Lemma~\ref{lem-cap}, which
permits to reach the nearly optimal
Proposition~\ref{prop-cap}, and explains
how it is used to establish Theorem~\ref{theo-Kn}.
Section~\ref{sec-trans} transfers the deviation estimate
from the range to a double sum over Green's function, and
Corollary~\ref{cor-trans} is valid in any dimensions larger than two. 
Section~\ref{sec-ub} is the technical heart of
the paper, and after introducing the scale $T$, in $d\ge 5$ and
in $d=3$, treats the deviation of the
double sum over Green's function. In this section, we establish
the upper bounds in \reff{bounds-d5}
and \reff{theo.d3}. The lower bounds are proved in
Section~\ref{sec-LB}. Finally
Section~\ref{sec-gauss} deals with the Gaussian regime and the proof of Theorem \ref{theo-gauss}.


\section{Notation and basic results}\label{sec-notation}
In this section, we introduce further notation,
and recall Lemma~\ref{lem.sum.Green} from \cite{AS}.

We discuss only $d\ge 3$ in this study.
For $z\in \Z^d$, We denote by $\bP_z$ the law of the 
simple random walk starting at $z$, and simply by $\bP$ when $z=0$.
Green's functions read as follows. For $z\in \Z^d$, 
\begin{equation}\label{def-Green}
G(z):=\mathbb E\left[\sum_{n=0}^\infty {\bf 1}\{S_n=z\}\right],
\quad\text{and}\quad\forall T\in \N,\quad
G_T(z):=\mathbb E\left[\sum_{n=0}^T {\bf 1}\{S_n=z\}\right].
\end{equation}
In particular, if $\cR_\infty$ is the range over period $[0,\infty)$,
\begin{equation}\label{inGT}
\mathbb P(z\in \cR_\infty) \le G(z)\quad\text{and}\quad
\mathbb P(z\in \cR_T) \le G_T(z).
\end{equation}
The following asymptotic is well known (see \cite{LL}):  
\begin{equation}\label{Green}
G(z)\ =  \ \cO\left( \frac 1{\|z\|^{d-2}}\right),  
\end{equation}
with $\|\cdot \|$ the Euclidean norm. 
Another result we should need concerns the sum of $G_T$ 
in a region of space with prescribed density. 
For this purpuse, we introduce the local times. For
$n\in \N\cup\{\infty\}$, and $\Lambda \subseteq \Z^d$, we write 
$$
\ell_n(\Lambda):= \sum_{k=0}^n \1\{S_k\in \Lambda \}.
$$
We quote a result which involves no randomness,
but we state it in this form which we use later.
\begin{lemma}[\cite{AS}, Lemma 2.2] \label{lem.sum.Green}
Assume that for some $\K\subseteq \{1,\dots,n\}$, and $r\ge 1$, 
$$\ell_n(Q(S_k,r)) \le \rho r^d,\quad \text{for all }k \in \K.$$ 
Then for some constant $C>0$ (independent of $r$ and $\K$), 
for any $z\in \Z^d$, 
\begin{equation}\label{time-spent}
\sum_{k\in \K\, :\, S_k\notin Q(z,r)} G_T(S_k-z) \le C \rho \, T.
\end{equation}  
\end{lemma}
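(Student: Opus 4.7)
The plan is to prove this deterministic inequality by a dyadic decomposition of the sum according to the $\ell^\infty$-distance of $S_k$ to $z$, together with standard heat kernel estimates on the truncated Green's function. The analytic input, derived from the local CLT, is that there are universal constants $c, C > 0$ such that $G_T(y) \le C\|y\|^{2-d}$ in the near regime $\|y\|_\infty \le \sqrt T$, while $G_T$ enjoys Gaussian decay of the form $G_T(y) \le C T^{2-d/2} \|y\|^{-2} e^{-c\|y\|^2/T}$ in the far regime $\|y\|_\infty > \sqrt T$. The Gaussian factor in the far regime is what keeps the distant contributions under control.

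For $j \ge 0$, define the dyadic shells
\[
A_j := \bigl\{k \in \K : 2^j r \le \|S_k - z\|_\infty < 2^{j+1} r\bigr\},
\]
and observe that the assumption $S_k \notin Q(z,r)$ forces $\|S_k - z\|_\infty \ge r/2$, so these shells (together with an innermost one of thickness $r/2$ handled identically) exhaust the sum. The crucial counting estimate is
\[
|A_j| \le C \cdot 2^{jd} \cdot \rho r^d,
\]
obtained by covering the $j$-th shell by $O(2^{jd})$ cubes of the form $Q(x, r/2)$ with $x \in (r/2)\Z^d$. The choice of side $r/2$ (strictly less than $r$) is decisive: any such small cube containing some $S_{k_\star}$ with $k_\star \in \K$ lies inside $Q(S_{k_\star}, r)$, because every point has $\ell^\infty$-distance less than $r/2$ from $S_{k_\star}$; the hypothesis then yields $\ell_n(Q(x, r/2)) \le \rho r^d$ for each such contributing cube, and summing gives the claimed bound on $|A_j|$.

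Setting $J_* := \lceil \log_2(\sqrt T / r) \rceil$, for $j \le J_*$ the polynomial bound gives
\[
\sum_{k \in A_j} G_T(S_k - z) \le C \rho r^d \cdot 2^{jd} \cdot (2^j r)^{2-d} = C \rho r^2 \cdot 4^j,
\]
a geometric series summing to $\le C \rho r^2 \cdot 4^{J_*+1} \le C' \rho T$. For $j > J_*$, writing $\beta_j := (2^j r)^2 / T \ge 1$ and substituting $2^{jd} = (\beta_j T / r^2)^{d/2}$ into the Gaussian bound, each shell contributes at most $C \rho T \cdot \beta_j^{d/2 - 1} e^{-c\beta_j}$; since $\beta_j$ grows geometrically in $j$, this series converges to a universal constant times $\rho T$. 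Adding the near- and far-field contributions yields the desired bound $C \rho T$.

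The main obstacle is the sharp balance required in the far-field estimate: one must use the correct prefactor $T^{2-d/2}\|y\|^{-2}$ in the heat kernel bound (a crude Gaussian $e^{-c\|y\|^2/T}$ alone would lead to the suboptimal $O(\rho T^{d/2})$), combined with the covering of side $r/2$ that allows the density hypothesis to be applied without slack. Everything else is routine dyadic book-keeping.
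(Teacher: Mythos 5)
Your proof is correct. The paper does not reprove this lemma (it cites Lemma~2.2 of \cite{AS}), so there is no in-text argument to compare against; but your route --- dyadic shells by $\ell^\infty$-distance, the $r/2$-cube covering that turns the local-time hypothesis into the shell count $|A_j|\le C2^{jd}\rho r^d$, and the polynomial/Gaussian split of $G_T$ at scale $\sqrt T$ --- is the natural one. I checked the far-field estimate $G_T(y)\le C\,T^{2-d/2}\|y\|^{-2}e^{-c\|y\|^2/T}$ for $\|y\|^2\ge T$ (substitute $u=\|y\|^2/s$ in $\int_0^T s^{-d/2}e^{-c\|y\|^2/s}\,ds$ and use the incomplete-Gamma tail), and the shell computations do balance to $O(\rho T)$ as you claim.

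One step should be spelled out more carefully. When $r>\sqrt T$ the near-field index range $0\le j\le J_*$ is empty, and the innermost shell $r/2\le\|S_k-z\|_\infty<r$ then also lies in the Gaussian regime. Your gloss ``handled identically'' reads as applying the polynomial bound to it, which gives only $C\rho r^2$ --- not $O(\rho T)$ once $r\gg\sqrt T$. The Gaussian bound does close this gap (it yields $C\rho T\,\beta^{d/2-1}e^{-c\beta}$ with $\beta=r^2/(4T)\ge 1$, hence $O(\rho T)$), but one needs to say which estimate applies according to whether $r\lessgtr\sqrt T$. A purely cosmetic remark on your closing paragraph: the refined prefactor $T^{2-d/2}\|y\|^{-2}$ is not actually what makes the balance work; the simpler valid bound $G_T(y)\le C\,T^{1-d/2}e^{-c\|y\|^2/T}$ for $\|y\|^2\ge T$ already yields $C\rho T\,\beta_j^{d/2}e^{-c\beta_j}$ per far shell, which still sums to $O(\rho T)$, so the step is less delicate than you suggest --- it is dropping the polynomial prefactor altogether that would be fatal.
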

The following connection between capacity and Green's function 
is useful. For any finite $\Lambda \subset \Z^d$,  
\begin{equation}\label{cap.Lambda.time}
\cc{\Lambda}\cdot \sup_{y\in \Z^d} \bE_y[\ell_\infty(\Lambda)]\ge |\Lambda|.
\end{equation}
This is a simple consequence of a last exit decomposition
(see Proposition 4.6.4 in \cite{LL}). 
First, for $A\subseteq \Z^d$, we denote by $|A|$ the cardinality of $A$, 
and by 
$$
H_A:=\inf\{n\ge 0\, :\, S_n\in A\},
\quad \text{and}\quad H_A^+:= \inf\{n\ge 1\, :\, S_n\in A\},
$$
respectively the hitting time of $A$ and the first return time to $A$. 
Then, for any $x\in \Lambda$, 
$$
1=\bP_x(H_\Lambda<\infty) = \sum_{y\in \Lambda}G(y-x)\bP_y(H_{\Lambda}^+
=\infty).
$$
Summing over $x\in \Lambda$, gives 
$$
|\Lambda| = \sum_{y\in \Lambda} \bP_y(H_{\Lambda}^+=\infty) \sum_{x\in \Lambda}G(y-x) = \sum_{y\in \Lambda} \bP_y(H_{\Lambda}^+=\infty) \cdot \bE_y[\ell_\infty(\Lambda)], 
$$
and \reff{cap.Lambda.time} follows using \eqref{cap.def}.

We denote the range between two times $m\le n$, 
as $\RR[m,n]:=\{S_m,\dots,S_n\}$. 
We recall that by the inclusion-exclusion formula, one has for any integers $n$ and $m$, 
\begin{equation}\label{king-2}
|\cR_{n+m}|=|\cR_n|+|\cR[n,n+m]| -|\cR_n\cap\cR[n,n+m]|.
\end{equation}
If we write $f\asymp g$, when $f/g$ is bounded from above 
and below by positive constants, one has 
\begin{equation}\label{esperance.intersection}
\bE[|\RR_n\cap \RR[n,2n]| ] \asymp 
\bE[|\RR_n\cap \RR[n,\infty)| ] \asymp \left\{ 
\begin{array}{ll}
\sqrt{n} & \text{if }d=3\\
\log n & \text{if }d=4\\
1& \text{if }d\ge 5. 
\end{array}
\right. 
\end{equation} 
We now recall asymptotics on the variance of the 
range (see \cite{JO,JP}).
\begin{lemma} \label{lem.variance}
In dimension three $\var(|\cR_n|)= \cO(n\log n)$,
and for $d\ge 4$, $\var(|\cR_n|)= \cO(n)$.
\end{lemma}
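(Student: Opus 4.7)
These are classical variance bounds, due to Jain--Orey \cite{JO} for $d\ge 5$ and Jain--Pruitt \cite{JP} for $d=3,4$. I would follow the standard approach via the ``last-visit'' decomposition
$$|\cR_n| \;=\; \sum_{k=0}^n Y_k \;+\; |\cR_n \cap \cR(n,\infty)|, \qquad Y_k := \1\{S_j \ne S_k,\ \forall j > k\},$$
where $\cR(k,\infty):=\{S_j : j > k\}$. Each $Y_k$ depends only on the walk after time $k$, so $(Y_k)$ is stationary with $\bE[Y_k] = \gamma_d = 1/G(0)$, and thus
$\var\bigl(\sum_{k=0}^n Y_k\bigr) = (n+1)\var(Y_0) + 2\sum_{l=1}^n (n+1-l)\,\cov(Y_0,Y_l)$.

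The core estimate is a covariance bound obtained via the strong Markov property at time $l$: one finds $\bE[Y_0 Y_l \mid \cF_l] = \1\{H_0^+ > l\}\cdot \psi(S_l)$, where $\psi(z) := \bP_0(S'_j \notin \{0,z\},\ \forall j\ge 1)$ satisfies the Green-function bound $|\psi(z) - \gamma_d| \le G(z)/G(0)$ (by splitting on whether the fresh walk hits $z$ before returning to $0$). Combined with the local-CLT estimate $\bE[G(S_l)] = O(l^{-(d-2)/2})$ and the return-time tail $|\bP_0(H_0^+ > l) - \gamma_d| = O(l^{-(d-2)/2})$, this yields
$|\cov(Y_0,Y_l)| = O(l^{-(d-2)/2})$.
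For the remainder, independence of $\cR(n,\infty)-S_n$ from $\cF_n$ and the strong-Markov bound $\bP(z,w\in\cR'_\infty)\le 2G(z)G(w-z)/G(0)^2$ give
$\bE[|\cR_n\cap \cR(n,\infty)|^2]\le (2/G(0)^2)\bigl(\sum_{a}G(a)^2\bigr)^2 = O(1)$ in $d\ge 5$ (where $\sum_a G(a)^2<\infty$).

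For $d\ge 5$, the covariance decay is summable in $l$ and the intersection has variance $O(1)$, so $\var(|\cR_n|) = O(n)$. For $d = 3,4$ the naive covariance sum is too weak: it gives $O(n^{3/2})$ in $d=3$ and $O(n\log n)$ in $d=4$, instead of the needed $O(n\log n)$ and $O(n)$. Here one passes to a block decomposition, splitting $[0,n]$ into $\lfloor n/\ell\rfloor$ consecutive blocks of length $\ell$ and iterating \reff{king-2}, using the independence (after recentering) of the block ranges, together with Green-function second-moment estimates on the pairwise block intersections. Optimizing over $\ell$ yields $\var(|\cR_n|) = O(n)$ in $d=4$ and $O(n\log n)$ in $d=3$. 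The main obstacle lies in the $d=3$ sharp second-moment bound $\bE[|\cR_\ell\cap \cR'_\ell|^2] = O(\ell\log\ell)$ for two independent walks starting at a common point: the crude pairwise-Green bound above yields only $O(\ell^{3/2})$, and one must exploit the fact that the intersection concentrates near the common starting point (the delicate calculation in \cite{JP}) to reach the correct order.
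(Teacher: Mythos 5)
The paper does not prove this lemma — it is recalled from Jain--Orey \cite{JO} and Jain--Pruitt \cite{JP}, which you correctly cite. Your escape-indicator decomposition is the right starting point, and your $d\ge 5$ argument is complete: the covariance bound $O(l^{-(d-2)/2})$ is summable and the cross term has bounded second moment since $\sum_a G(a)^2<\infty$.

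For $d=3,4$ the details you supply mislocate the difficulty. The second-moment bound you call delicate is already given by the crude Green-function argument: $\bE[|\cR_\ell\cap\cR'_\ell|^2] = \sum_{x,y}\bP(x,y\in\cR_\ell)^2 \le 4\bigl(\sum_z G_\ell(z)^2\bigr)^2$ with $\sum_z G_\ell(z)^2 = \sum_{j,k\le\ell}p_{j+k}(0) = \Theta(\sqrt\ell)$ in $d=3$, so the second moment is $\Theta(\ell)$ — not $O(\ell^{3/2})$, and in fact stronger than your claimed $O(\ell\log\ell)$; it is sharp since $\bE[|\cR_\ell\cap\cR'_\ell|]=\Theta(\sqrt\ell)$. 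Moreover, a block decomposition does not obviously close the argument: a single-scale version is circular in $\var(|\cR_\ell|)$, and a dyadic one with Minkowski across $\Theta(\log n)$ levels loses a logarithmic factor. The ingredient you are missing is a cancellation in the covariance that your separate bounds on $\psi(z)-\gamma_d$ and on $\bP(H_0^+>l)-\gamma_d$ discard. Writing $\cov(Y_0,Y_l)=\bE\bigl[\1\{H_0^+>l\}\bigl(\gamma_d\bP_{S_l}(H_0<\infty)-\bP_{S_l}(H_0<\infty,\,H_{S_l}^+=\infty)\bigr)\bigr]$ and using the last-exit identity $\bP_{S_l}(H_0<\infty,\,H_{S_l}^+=\infty)=\bP_{S_l}(H_0<H_{S_l}^+)\,\bP_0(H_{S_l}=\infty)$, the terms linear in $G(S_l)/G(0)$ cancel and one is left with $\cov(Y_0,Y_l)=O(\bE[G(S_l)^2])$, which is $O(1/l)$ in $d=3$ and $O((\log l)/l^2)$ in $d=4$. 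Summing in $l$ then gives $\var\bigl(\sum_k Y_k\bigr)=O(n\log n)$ and $O(n)$ respectively, the cross term is negligible, and no block decomposition is needed.
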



\section{Main Tools}\label{sec-tools}
We present here the proof of Theorem \ref{theo-Kn}. 
As a byproduct we obtain that on the event 
where the size of the set $\K_n(\cdot,\cdot)$ (see \reff{def-Kn}) is large, 
the time spent in a related set $\mathcal V_n(\cdot,\cdot)$, 
as in \reff{def-CV}, is also large.
Namely, we get the following result 
which enters the proof of Theorem \ref{theo-path}. 
\begin{proposition}\label{prop.Kn.path}
For any $A>0$, there exist $\alpha>0$, such that for any $n\ge 1$, $L>0$, $r\ge 1$, $\rho>0$, 
$$\bP\left( \K_n(r,\rho)\ge L,\  \ell_n(\mathcal V_n(r,2^{-d}\rho))\le  \alpha L\right) \le \exp(-A\rho^{2/d}L^{1-2/d}). $$
 \end{proposition}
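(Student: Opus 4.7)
The plan is to prove this by a purely deterministic pigeonhole/covering argument, showing that the event in question is in fact \emph{empty} once $\alpha$ is a small enough constant depending only on $d$, so the stated exponential bound holds automatically for every $A>0$. The key is to compare the set $\K_n(r,\rho)$ of indices around which the density is $\rho$ on the \emph{off-grid} scale $r$ with the grid-aligned dense region $\V_n(r,2^{-d}\rho)$.

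First I will observe that for each $k\in\K_n(r,\rho)$ the cube $Q(S_k,r)$ has $\ell_n$-content at least $\rho\,r^d$, and that it overlaps at most $2^d$ of the partitioning grid cubes $\{Q(y,r)\}_{y\in r\Z^d}$: in each coordinate the length-$r$ window $[S_k^i-r/2,S_k^i+r/2)$ straddles at most two length-$r$ grid intervals. Pigeonhole then produces a grid point $y_k\in r\Z^d$ with
\[
\ell_n(Q(y_k,r))\;\ge\;\ell_n\bigl(Q(y_k,r)\cap Q(S_k,r)\bigr)\;\ge\;2^{-d}\rho\,r^d,
\]
so $y_k\in\C_n(r,2^{-d}\rho)$. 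Writing $x_k$ for the (unique) grid point with $S_k\in Q(x_k,r)$, the triangle inequality $|x_k^i-y_k^i|\le|x_k^i-S_k^i|+|S_k^i-y_k^i|< r/2+r$ combined with $x_k-y_k\in r\Z^d$ forces $|x_k-y_k|_\infty\le r$, so $x_k$ is a grid nearest-neighbor of some point of $\C_n(r,2^{-d}\rho)$.

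Next I will introduce the one-step grid enlargement
\[
\widetilde\C:=\bigcup_{y\in\C_n(r,2^{-d}\rho)}\bigl(y+r\{-1,0,1\}^d\bigr),\qquad
\widetilde\V:=\bigcup_{x\in\widetilde\C}Q(x,r),
\]
so that $S_k\in\widetilde\V$ for every $k\in\K_n(r,\rho)$ and hence $\ell_n(\widetilde\V)\ge|\K_n(r,\rho)|$. Since $|\widetilde\C\setminus\C_n(r,2^{-d}\rho)|\le(3^d-1)\,|\C_n(r,2^{-d}\rho)|$ and each grid cube in $\widetilde\V\setminus\V_n(r,2^{-d}\rho)$ has $\ell_n$-content strictly less than $2^{-d}\rho\,r^d$, the two elementary bounds
\[
\ell_n\bigl(\widetilde\V\setminus\V_n(r,2^{-d}\rho)\bigr)\le(3^d-1)\,|\C_n(r,2^{-d}\rho)|\cdot 2^{-d}\rho\,r^d,\qquad
\ell_n\bigl(\V_n(r,2^{-d}\rho)\bigr)\ge|\C_n(r,2^{-d}\rho)|\cdot 2^{-d}\rho\,r^d
\]
will combine (the second bound being used to replace the factor $|\C_n(r,2^{-d}\rho)|\cdot 2^{-d}\rho\,r^d$ in the first) to give the pointwise inequality
\[
|\K_n(r,\rho)|\;\le\;\ell_n(\widetilde\V)\;\le\;3^d\,\ell_n\bigl(\V_n(r,2^{-d}\rho)\bigr).
\]

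This finishes the argument: for any $\alpha\le 3^{-d}$ the event $\{|\K_n(r,\rho)|\ge L,\ \ell_n(\V_n(r,2^{-d}\rho))\le\alpha L\}$ is empty, so its probability is $0$ and the right-hand side of the proposition is a vacuous bound for every $A>0$. The $\alpha$ produced in this way depends only on $d$ and is independent of $A$; the only obstacle, which is mild, lies in the combinatorial bookkeeping of the two constants $2^d$ (from the pigeonhole on grid cubes overlapping $Q(S_k,r)$) and $3^d$ (from the one-grid-step enlargement $\widetilde\C$).
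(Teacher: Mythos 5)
Your argument is correct, and it takes a genuinely different and in fact stronger route than the paper does. The paper's proof is probabilistic: it invokes the machinery of Lemma~\ref{lem-Kn} and Remark~\ref{rem-highLS} to show that, off an event of probability at most $\exp(-A\rho^{2/d}L^{1-2/d})$, one can extract a well-separated collection $\C\in\A(r)$ of size $\ge\alpha L/(\rho r^d)$ with density $\ge\rho$ in each cube, and then passes (by the same $2^d$-cube pigeonhole you use) to the grid to deduce $|\C_n(r,2^{-d}\rho)|\ge\alpha L/(\rho r^d)$ and hence $\ell_n(\V_n)\ge 2^{-d}\alpha L$. The $\alpha$ obtained this way depends on $A$, because the dyadic decomposition $\K_n=\bigcup_i\K_n^*(r,2^i\rho)$ from the proof of Theorem~\ref{theo-Kn} yields the well-separated collection only from the low-$i$ levels, and the high-$i$ levels must be discarded using a probabilistic tail bound whose strength determines how small $\alpha$ must be taken. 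Your argument bypasses the well-separated extraction and the probabilistic step altogether: you relate $|\K_n(r,\rho)|$ directly to $\ell_n(\widetilde\V)$ via the pigeonhole and then control $\ell_n(\widetilde\V)$ by $\ell_n(\V_n)$ using the disjointness of grid cubes and the density threshold defining $\C_n$, which gives the pointwise, always-true inequality $|\K_n(r,\rho)|\le 3^d\,\ell_n\bigl(\V_n(r,2^{-d}\rho)\bigr)$. This makes the event in the proposition empty for any $\alpha<3^{-d}$, so the probability is $0$ rather than merely stretched-exponentially small, and your $\alpha$ depends only on $d$, not on $A$. I checked the two pigeonhole/counting steps carefully (the $2^d$ bound on overlapping grid cubes, the $3^d$ bound on $|\widetilde\C|$, and the density lower/upper bounds on cubes inside and outside $\C_n$); they all go through. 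The one cosmetic point is that you should fix a definite $\alpha$, say $\alpha=2^{-1}3^{-d}$, to avoid fussing over whether the derived inequality is strict or not, but this is a triviality.
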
  
Before we come to the proofs, let us first introduce useful tools and notation. For $r>0$, we define 
$$
\mathcal A(r):=  \{\mathcal C\subset \Z^d\, :\, 
|\mathcal C|<\infty,\, \|x-y\|_\infty>2r
\text{ for all }x\neq y\in \mathcal C\},
$$ 
where $\|\cdot\|_\infty$ is the sup-norm.
One reason for the choice of a separation of $2r$ is that if 
$x,y,z\in\Z^d$ with overlapping cubes $Q(x,r)\cap Q(y,r)\not=\emptyset$ and
$Q(x,r)\cap Q(z,r)\not=\emptyset$, then $ \|z-y\|_\infty\le 2r$.

The first step towards the proof of Theorem \ref{theo-Kn} 
is the following result.
\begin{lemma}\label{lem-cap}  
There exists a constant $\kappa\in (0,1)$, such that
for any $r\ge 1$ and any $\mathcal C\in \mathcal A(r)$, 
there exists $U\subseteq \mathcal C$, 
satisfying 
\begin{equation}\label{cap.U}
 \kappa |\CC|^{1-\frac 2d} \le |U| \le |\CC|^{1-\frac 2d} ,\quad\text{ and }\quad
\cc{\cup_{x\in U} Q(x,r)}\ge \kappa r^{d-2}|U|.
\end{equation}
\end{lemma}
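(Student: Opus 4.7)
The strategy is to reduce the capacity lower bound to an expected-hitting-time estimate via identity \eqref{cap.Lambda.time}. Since the cubes $\{Q(x,r)\}_{x \in \CC}$ are pairwise disjoint (as $\CC \in \mathcal{A}(r)$), for any $U \subseteq \CC$ the union $A := \cup_{x \in U} Q(x,r)$ has volume $|A| = r^d |U|$, so \eqref{cap.Lambda.time} reduces the inequality $\cc{A} \ge \kappa r^{d-2} |U|$ to a hitting-time bound of the form $\sup_{y \in \Z^d} \mathbb{E}_y[\ell_\infty(A)] \le C r^2$ for a dimensional constant $C$. Equivalently, we need $U$ such that the restriction of the equilibrium measure of $A$ has bounded potential.

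To produce the subset $U$, my plan is to work with the equilibrium measure $e$ of the full union $\Lambda := \cup_{x \in \CC} Q(x,r)$, characterized by $\sum_z e(z) G(z-y) \le 1$ for all $y$ and $\sum_z e(z) = \cc{\Lambda}$. Setting $m_x := e(Q(x,r))$ for $x \in \CC$, a Frostman-type calculation---summing the equilibrium inequality over $y \in Q(x,r)$ and using the estimate $\sum_{y \in Q(x,r)} G(z-y) \ge c r^2$ for $z \in Q(x,r)$---gives the uniform ceiling $m_x \le C_0 r^{d-2}$. The total mass satisfies $\sum_x m_x = \cc{\Lambda} \ge c r^{d-2} |\CC|^{1-2/d}$, via the isoperimetric lower bound on the capacity of $\Lambda$, itself a consequence of \eqref{cap.Lambda.time} applied to $\Lambda$. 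Defining $U$ by thresholding $m_x$ at a level $\theta r^{d-2}$, and restricting $e$ to $\cup_{x \in U} Q(x,r)$ to produce a valid test measure (its Green's potential being dominated by that of $e$), yields the capacity lower bound $\cc{\cup_{x \in U} Q(x,r)} \ge \sum_{x \in U} m_x \ge \theta r^{d-2} |U|$.

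The main obstacle will be calibrating the threshold $\theta$ so that $|U|$ simultaneously lies in the narrow window $[\kappa |\CC|^{1-2/d}, |\CC|^{1-2/d}]$ while $\theta$ stays bounded below by a positive dimensional constant. A pigeonhole over the $O(\log|\CC|)$ dyadic levels of $\{m_x\}_{x \in \CC}$ produces a level maximizing the mass-count product; combined with the Frostman ceiling $m_x \le C_0 r^{d-2}$ and the isoperimetric lower bound on $\cc{\Lambda}$, this should pin $|U|$ down to the right order, provided one can absorb the logarithm into the constant by a careful choice of levels. I anticipate that most of the proof effort goes into this calibration, perhaps via a multi-scale refinement or an iterated Frostman selection---either to pass from a first candidate $U$ of suboptimal size to one in the prescribed range, or to interpolate between the extreme regimes (where $e$ is concentrated on few cubes near the ceiling, and where it is diffuse over many cubes far below the ceiling).
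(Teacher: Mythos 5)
Your reduction of the capacity lower bound to a hitting-time estimate via \eqref{cap.Lambda.time} is exactly the first step the paper takes, so that part is on track. The gap is in the construction of $U$: restricting the equilibrium measure $e$ of the full union $\Lambda = \cup_{x\in\CC}Q(x,r)$ to the sub-union $\cup_{x\in U}Q(x,r)$ does give a valid test measure and hence the lower bound $\cc{\cup_{x\in U}Q(x,r)}\ge\sum_{x\in U}m_x$, but that bound is far too weak precisely in the regime that matters. Consider $\CC$ a tightly packed grid of $m$ points with the minimal spacing $2r$: then $\Lambda$ is essentially a solid box of side $\approx r\,m^{1/d}$, with $\cc\Lambda\asymp r^{d-2}m^{1-2/d}$, and its equilibrium measure is concentrated on the $\asymp m^{(d-1)/d}$ boundary cubes, each carrying $m_x\asymp r^{d-2}m^{-1/d}\ll r^{d-2}$ (interior cubes carry much less). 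So \emph{every} $m_x$ is far below the Frostman ceiling, no threshold $\theta$ bounded away from zero selects any cube, and for any $U\subset\CC$ the restricted mass $\sum_{x\in U}m_x$ is $O(r^{d-2}|U|\,m^{-1/d})$, which is $o(r^{d-2}|U|)$. Yet a well-chosen sparse $U$ of size $m^{1-2/d}$ does have capacity $\asymp r^{d-2}|U|$; the equilibrium measure of $\Lambda$ simply does not see this, because the dense $\Lambda$ screens itself much more than the sparse sub-union would. In other words the argument is circular: the right test measure would be the equilibrium measure of the sub-union, which you cannot access before having chosen $U$.

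The paper sidesteps the equilibrium measure entirely and builds $U$ by a deterministic packing argument: set $R:=\lfloor r|\CC|^{2/d^2}\rfloor$, greedily pick $z_1,z_2,\dots\in\CC$ so that the cubes $Q(z_i,R/2)$ are disjoint while $\cup_i Q(z_i,R)\supseteq\CC$, and truncate the list at $N_0:=\min(N,\lfloor|\CC|^{1-2/d}\rfloor)$. A volume count shows $N\ge 2^{-d}|\CC|^{1-2/d}$, so $|U|=N_0$ lands in the required window; and the enforced separation $\ge R/2$ between the chosen centers lets one bound $\sup_y\bE_y[\ell_\infty(\cup_{x\in U}Q(x,r))]$ directly by a Green's-function sum, the nearest cube contributing $O(r^2)$ and the remaining $\le|\CC|^{1-2/d}$ cubes contributing $O(r^d R^{2-d}|U|^{2/d})=O(r^2)$ thanks to the specific choice of $R$. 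This is where the exponent $2/d^2$ comes from, and it is the piece your proposal is missing: a mechanism that forces spatial separation of the selected cubes rather than trying to read it off the equilibrium measure of the over-crowded set.
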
 
\begin{proof}
The lower bound on the capacity 
follows from recalling \reff{cap.Lambda.time}, and 
observing the following fact.
There exists $C>0$, such that for any finite $\CC\in \mathcal A(r)$, there exists $U\subseteq \CC$, satisfying 
\begin{equation}\label{wish-cap}
\frac 1{2^d} |\CC|^{1-\frac 2d}\le |U| 
\le  |\CC|^{1-\frac 2d},\quad \text{and}\quad 
\sup_{y\in\Z^d} \bE_y[\ell_\infty(\cup_{x\in U}Q(x,r))]\le Cr^2.
\end{equation}
Indeed, define first $R:=\lfloor r|\CC|^{2/d^2}\rfloor$, 
and then recursively a sequence $(z_i)_{i=1,\dots, N}$, 
of elements of $\CC$, such that the cubes 
$Q(z_i,R/2)$, $i=1,\dots,N$, are disjoint, 
and the union of the cubes $Q(z_i,R)$ contains $\CC$. 
Note that in this case $\cup_{x\in \CC}Q(x,r)$ 
is contained in $\cup_{i\le N} Q(z_i,2R)$, 
which implies $N(2R)^d\ge |\CC|r^d$, 
since by definition the cubes $Q(x,r)$, for $x\in \CC$, are disjoint. 
By definition of $R$, this gives $N\ge 2^{-d}|\CC|^{1-2/d}$. We now set $N_0:=\min (N,\lfloor |\CC|^{1-\frac 2d}\rfloor)$, and let $U:=\{z_1,\dots,z_{N_0}\}$. 
By construction the cardinality of $U$ satisfies the desired constraints from 
\reff{wish-cap}. Furthermore, using \eqref{Green} we verify 
that for any $y\in \Z^d$,  
$$ 
\bE_y[\ell_\infty(\cup_{x\in U}Q(x,r))] 
\le \sum_{x\in U}\sum_{z\in Q(x,r)}G(z-y)
\le Cr^2 + C\frac{r^d}{R^{d-2}}|U|^{2/d}\le 2Cr^2,
$$
as expected in \reff{wish-cap}, and this concludes the proof of the lemma. 
\end{proof}

An important consequence of the previous lemma is the following result.

\begin{proposition}\label{prop-cap}
There exist positive constants $C_0$ and $\kappa$, such that 
for any $r\ge 1$, $\rho>0$, $m\ge 1$, $n\ge 1$, satisfying the
condition $\rho r^{d-2}\ge C_0\log n$, one has 
$$
\bP\left(\exists \C \in \A(r)\, :\, |\C|=m, 
\text{ and }\ell_n(Q(x,r))\ge \rho r^d \ 
\text{for all x }\in \C\right) 
\le C \exp\left(-\kappa \rho (mr^d)^{1-\frac 2d}\right).
$$
\end{proposition}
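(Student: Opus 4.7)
\textbf{Proof plan for Proposition~\ref{prop-cap}.} The idea is that a direct union bound over all $\CC\in\A(r)$ of size $m$ is far too costly (its entropy is of order $m\log(nr^d)$), so we use Lemma~\ref{lem-cap} to compress any witnessing configuration $\CC$ to a much smaller but ``as hard to fill'' subconfiguration $U$, whose size is only of order $m^{1-2/d}$ but whose union of cubes still enjoys capacity at least $\kappa r^{d-2}|U|$. The entropy of the union bound then drops to order $m^{1-2/d}\log(nr^d)$, which is absorbed by the exponential gain coming from Proposition~\ref{prop.cap.old} applied to $U$, provided the hypothesis $\rho r^{d-2}\ge C_0\log n$ holds with $C_0$ large enough.

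First I would note that we may assume $\rho r^d\ge 1$ (otherwise each cube $Q(x,r)$ contains at least one visited site is automatic and harmless), and similarly $r$ polynomial in $n$ (since $\rho r^d\le n+1$ is necessary for the event to be nonempty). Next, I would apply Lemma~\ref{lem-cap}: on the event in question, for every witnessing $\CC$ there exists $U\subseteq\CC$ with
\[
\kappa\, m^{1-2/d}\le |U|\le m^{1-2/d},\qquad \cc{\cup_{x\in U}Q(x,r)}\ge \kappa r^{d-2}|U|,
\]
and $\ell_n(Q(x,r))\ge \rho r^d$ still holds for every $x\in U$. Thus the target event is contained in the union, over sets $U\in\A(r)$ of cardinality $k\in[\kappa m^{1-2/d},\,m^{1-2/d}]$ satisfying the above capacity lower bound, of the event $\{\ell_n(Q(x,r))\ge \rho r^d\ \forall x\in U\}$.

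For each such fixed $U$, I would invoke the capacity-based filling estimate Proposition~\ref{prop.cap.old} of \cite{AS} to get a bound of the form
\[
\bP\bigl(\ell_n(Q(x,r))\ge \rho r^d\ \forall x\in U\bigr)\le C\exp\bigl(-c\,\rho\cdot\cc{\cup_{x\in U}Q(x,r)}\bigr)\le C\exp\bigl(-c\kappa\,\rho r^{d-2}\,|U|\bigr),
\]
and note that $\rho r^{d-2}|U|\asymp \rho (mr^d)^{1-2/d}$, which is precisely the target exponent. For the union bound, since any admissible $U$ must have each of its cubes meeting $\RR_n$, every $x\in U$ lies within $\ell^\infty$-distance $r/2$ of some $S_i$, so belongs to a (random) set of size at most $(n+1)r^d$; hence the number of $U$'s of size $k$ is at most $\binom{(n+1)r^d}{k}\le \exp(k\log((n+1)r^d))$.

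Combining, the probability in question is bounded by
\[
\sum_{k\asymp m^{1-2/d}}\exp\Bigl(k\log((n+1)r^d)-c\kappa\,\rho r^{d-2}\,k\Bigr),
\]
and the hypothesis $\rho r^{d-2}\ge C_0\log n$, with $C_0$ chosen large enough that $C_0\log n \ge 2\log((n+1)r^d)/(c\kappa)$ for all relevant $(n,r)$, makes the exponential gain dominate by at least a factor of $2$, giving the desired $C\exp(-\tfrac{c\kappa}{2}\,\rho (mr^d)^{1-2/d})$. The step I expect to be most delicate is the \emph{matching} one: verifying that the relation $|U|\asymp m^{1-2/d}$ delivered by Lemma~\ref{lem-cap} is exactly sharp enough so that $\rho r^{d-2}|U|$ matches $\rho(mr^d)^{1-2/d}$, and that the logarithmic factor $\log((n+1)r^d)$ from the union bound is uniformly dominated by $C_0\log n$ once we restrict to the regime $r\le n^{O(1)}$ forced by $\rho r^d\le n+1$.
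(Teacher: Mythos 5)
Your proposal follows the paper's proof essentially step for step: extract a small high-capacity subconfiguration $U$ via Lemma~\ref{lem-cap}, apply Proposition~\ref{prop.cap.old} to $U$, and absorb the $O(m^{1-2/d}\log n)$ entropy of the union bound into the exponential using the hypothesis $\rho r^{d-2}\ge C_0\log n$. The one imprecision is in your counting of candidate $U$'s: you bound their number by $\binom{(n+1)r^d}{k}$ on the grounds that $U$ must lie within $r/2$ of the range, but that reference set is random, and a union bound must run over deterministic candidates; the paper instead simply counts $U\subseteq[-n,n]^d$, giving $\exp(dk\log 2n)$, which is of the same order and makes the argument airtight without affecting the conclusion.
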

The proof of this proposition is based on Lemma \ref{lem-cap} and an analogous result from \cite{AS}: 

\begin{proposition}[\cite{AS}] \label{prop.cap.old}
There exist positive constants $C$ and $\kappa$, such that 
for any $\rho>0$, $r\ge 1$, $\C\in \A(r)$, and $n\ge 1$, one has 
\begin{equation}\label{main-AS17}
\bP\left(\ell_n(Q(x,r))\ge \rho r^d \ \text{for all x }\in \C\right) 
\le C (n|\C|)^{|\C|} \exp\left(-\kappa\rho 
\ \cp(\bigcup_{x\in \C} Q(x,r)) \right).
\end{equation}
\end{proposition}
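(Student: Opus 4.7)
The plan is to combine Lemma~\ref{lem-cap} with Proposition~\ref{prop.cap.old} through a union bound. The idea is that if some $\C\in\A(r)$ realises the density condition, then Lemma~\ref{lem-cap} extracts from it a much smaller separated subset $U\subseteq\C$ whose associated union of cubes has capacity at least of order $r^{d-2}|U|$; on $U$ the density condition persists trivially. Bounding the existence event by a union bound over candidate $U$'s and invoking Proposition~\ref{prop.cap.old} with the quantitative capacity lower bound of Lemma~\ref{lem-cap} will then deliver the desired estimate.

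Concretely, let $\mathcal E$ denote the event in question, and for each $\C$ witnessing $\mathcal E$ I choose $U\subseteq\C$ as in Lemma~\ref{lem-cap}: $U\in\A(r)$, $|U|=m_0$ for some integer $m_0\in[\kappa m^{1-2/d},\,m^{1-2/d}]$, and $\cc{\cup_{x\in U}Q(x,r)}\ge \kappa r^{d-2} m_0$. Every $x\in U$ has $Q(x,r)\cap\cR_n\neq\emptyset$, hence lies in a ball of radius $\le 2n$ about the origin; so the number of admissible sets $U$ of cardinality $m_0$ is at most $(Cn^d)^{m_0}$. Since the cubes indexed by $U$ are pairwise disjoint, one also has $m_0\rho r^d \le n+1$, hence $m_0\le n$. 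Applying Proposition~\ref{prop.cap.old} to each such $U$ together with the capacity bound gives
\begin{equation*}
\bP(\mathcal E)\ \le\ \sum_{m_0=\lceil \kappa m^{1-2/d}\rceil}^{\lfloor m^{1-2/d}\rfloor} (Cn^d)^{m_0}\cdot C(nm_0)^{m_0}\exp\bigl(-\kappa^2\rho\, r^{d-2}\, m_0\bigr),
\end{equation*}
so each summand is at most $C\exp\bigl(m_0\bigl[(d+2)\log(Cn)-\kappa^2\rho\, r^{d-2}\bigr]\bigr)$.

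Choosing $C_0$ large enough that $\kappa^2 C_0 \ge 2(d+2)$, the hypothesis $\rho r^{d-2}\ge C_0\log n$ renders the bracket at most $-\tfrac12\kappa^2\rho\, r^{d-2}$, and the $O(m^{1-2/d})$ polynomial prefactor coming from the outer sum is absorbed into a small fraction of the remaining exponential. Since $m_0\ge\kappa m^{1-2/d}$ in the summation, the identity $r^{d-2}\,m^{1-2/d}=(mr^d)^{1-2/d}$ then yields
\begin{equation*}
\bP(\mathcal E)\ \le\ C\exp\bigl(-\kappa'\rho\,(mr^d)^{1-2/d}\bigr)
\end{equation*}
for an appropriate $\kappa'>0$.

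The main technical point will be the calibration of $C_0$: the union-bound factor $(Cn^d)^{m_0}$ combined with the $(nm_0)^{m_0}$ prefactor from Proposition~\ref{prop.cap.old} contributes $O(m_0\log n)$ to the exponent, which is precisely what the hypothesis $\rho r^{d-2}\ge C_0\log n$ is tailored to swallow. The degenerate regime where $[\kappa m^{1-2/d}, m^{1-2/d}]$ contains no integer (essentially $m$ bounded) is handled separately by applying Proposition~\ref{prop.cap.old} directly to $\C$, using the trivial lower bound $\cp(Q(x,r))\gtrsim r^{d-2}$ on each individual cube.
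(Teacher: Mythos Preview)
Your proposal is not a proof of the stated Proposition~\ref{prop.cap.old} at all: it is a proof of Proposition~\ref{prop-cap}. Note that Proposition~\ref{prop.cap.old} is a result quoted from \cite{AS} for a \emph{fixed} set $\C\in\A(r)$, with no hypothesis of the form $\rho r^{d-2}\ge C_0\log n$, and with conclusion involving the combinatorial prefactor $(n|\C|)^{|\C|}$ and the actual capacity $\cp(\cup_{x\in\C}Q(x,r))$ in the exponent. Your argument, by contrast, (i) invokes Proposition~\ref{prop.cap.old} as an ingredient, which would be circular; (ii) quantifies over $\C$ (``for each $\C$ witnessing $\mathcal E$''), which is the existential event of Proposition~\ref{prop-cap}; (iii) uses the hypothesis $\rho r^{d-2}\ge C_0\log n$ that is absent from the statement; and (iv) concludes with the bound $C\exp(-\kappa'\rho(mr^d)^{1-2/d})$, which is the conclusion of Proposition~\ref{prop-cap}, not of Proposition~\ref{prop.cap.old}.

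If your intended target was in fact Proposition~\ref{prop-cap}, then your argument is correct and follows exactly the paper's route: extract $U\subseteq\C$ via Lemma~\ref{lem-cap}, union-bound over the at most $\exp(O(m^{1-2/d}\log n))$ candidate subsets $U$ in $[-n,n]^d$, apply Proposition~\ref{prop.cap.old} to each $U$ together with the capacity lower bound $\cp(\cup_{x\in U}Q(x,r))\ge \kappa r^{d-2}|U|$, and absorb all logarithmic factors using $\rho r^{d-2}\ge C_0\log n$. The paper's proof is the same, just more tersely stated.
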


\begin{remark}\emph{Recall that  one has the general lower bound $\cc{\Lambda}\ge c| \Lambda|^{1-2/d}$, with $c>0$ some constant. Thus in the previous proposition, 
the term in the exponential is also at least of the order of 
$(|\C|r^d)^{1-2/d}$, 
as in Proposition \ref{prop-cap}, and it is the largest bound
one obtains without additional knowledge on the set $\C$. 
Therefore the main input of our new Proposition \ref{prop-cap} 
when compared 
to the Proposition \ref{prop.cap.old} is that the combinatorial term in front of the exponential has been completely removed. This is of 
crucial importance for the proof of Theorem \ref{theo-Kn}.}
\end{remark} 

\begin{proof}[Proof of Proposition \ref{prop-cap}]
Applying Lemma~\ref{lem-cap}, we see that from any set $\C\in \A(r)$ with cardinality $m$, 
one can extract a subset $U\subseteq \C$, satisfying \eqref{cap.U}. In particular its cardinality is of order $m^{1-2/d}$, and thus 
the number of possible choices for such set $U$ in $[-n,n]^d$ 
is at most $\exp(d m^{1-2/d} \log 2n)$.
It suffices then to use the lower bound on the capacity in \eqref{cap.U} and apply Proposition \ref{prop.cap.old} for such $U$, 
since by the hypothesis \eqref{hyp.r} all the combinatorial factors 
can be absorbed in the exponential bound given by \eqref{main-AS17}.
\end{proof}

We arrive now to the proof of Theorem \ref{theo-Kn}. 
As an intermediate step, we prove the same result for the sets $\K_n^*(r,\rho)$ defined by: 
$$\K_n^*(r,\rho):= \{k\le n:\ \rho r^d \le \ell_n(Q(S_k,r))\le 2 \rho r^d\}.$$
\begin{lemma}\label{lem-Kn}
There exist positive constants $C_0$ and $\kappa$, 
such that for any $\rho>0$, $r\ge 1$, and $n\ge 2$, satisfying  
\begin{equation*}
\rho\, r^{d-2} \ge C_0\log n,
\end{equation*} 
one has for any $L\ge 1$, 
\begin{equation*}
\mathbb P\big(|\K_n^*(r,\rho)|\ge L\big)\, 
\le \, C \exp\left(-\kappa\, \rho^{\frac 2d}\, L^{1-\frac 2d} \right).
\end{equation*}
\end{lemma}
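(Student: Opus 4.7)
The plan is to reduce the event $\{|\K_n^*(r,\rho)|\ge L\}$ to the existence of a $>2r$-separated collection of grid cubes carrying density of order $\rho r^d$, and then to quote Proposition \ref{prop-cap}. The decisive feature of $\K_n^*$ (as opposed to $\K_n$) is the \emph{upper} bound $\ell_n(Q(S_k,r))\le 2\rho r^d$ for $k\in\K_n^*$; this is what provides the deterministic per-step control that is missing for the larger set $\K_n$.

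The extraction proceeds in three steps. First, I greedily build $\C=\{x_1,\dots,x_m\}\subseteq\{S_k:k\in\K_n^*\}$: starting from $J_0:=\K_n^*$, at step $i$ pick any $k_i\in J_{i-1}$, set $x_i:=S_{k_i}$, and let $J_i:=J_{i-1}\setminus\{k\in J_{i-1}:S_k\in Q(x_i,r)\}$. Each step removes at most $\ell_n(Q(x_i,r))\le 2\rho r^d$ indices, hence $m\ge L/(2\rho r^d)$; and $S_{k_j}\notin Q(x_i,r)$ for all $j>i$ yields $\|x_j-x_i\|_\infty\ge r/2$. Second, for each $x\in\C$, since $\ell_n(Q(x,r))\ge\rho r^d$ and $Q(x,r)$ meets at most $2^d$ cubes of the grid $r\Z^d$, pigeonhole produces $y(x)\in r\Z^d$ with $\|y(x)-x\|_\infty\le r$ and $\ell_n(Q(y(x),r))\ge\rho r^d/2^d$. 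The preimages of any $y$ under $x\mapsto y(x)$ lie in a cube of side $2r$ around $y$ and are $r/2$-separated in sup-norm; a volume packing bounds their number by $5^d$, so $|\{y(x):x\in\C\}|\ge m/5^d$. Third, I extract greedily a $>2r$-separated subset $\hat\C\subseteq\{y(x)\}\subseteq r\Z^d$; since at most $5^d$ grid points of $r\Z^d$ sit within sup-distance $2r$ of a fixed grid point, $|\hat\C|\ge m/25^d$.

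Assembling these bounds, the event $\{|\K_n^*(r,\rho)|\ge L\}$ entails the existence of $\hat\C\in\A(r)$ with $|\hat\C|\ge L/(C_2\rho r^d)=:m_0$ for some constant $C_2=C_2(d)$, and with $\ell_n(Q(y,r))\ge\rho r^d/2^d$ for every $y\in\hat\C$. Truncating $\hat\C$ to exactly $m_0$ elements and invoking Proposition \ref{prop-cap} with the parameter $\rho$ replaced by $\rho/2^d$ (the hypothesis $\rho r^{d-2}\ge C_0\log n$ still suffices after enlarging the constant $C_0$ by a factor $2^d$), the exponent becomes $\kappa(\rho/2^d)(m_0 r^d)^{1-2/d}=\kappa'\rho^{2/d}L^{1-2/d}$, which is the advertised decay. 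The small range $L\le C_2\rho r^d$, where $m_0=0$, is handled directly by Proposition \ref{prop-cap} at $m=1$, which gives $\bP(|\K_n^*(r,\rho)|\ge 1)\le C\exp(-\kappa\rho r^{d-2})$, dominating $C\exp(-\kappa'\rho^{2/d}L^{1-2/d})$ in that regime.

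The main obstacle is reconciling the greedy extraction inside $\{S_k:k\in\K_n^*\}$, which only produces $r/2$-separation, with the $>2r$-separation required by Proposition \ref{prop-cap}. The grid-witness/packing detour above resolves this at the cost of a universal multiplicative constant in the cardinality and of dividing the density by $2^d$, neither of which damages the target exponent $\rho^{2/d}L^{1-2/d}$. Without the upper bound $\ell_n(Q(S_k,r))\le 2\rho r^d$, nothing would prevent a single very dense cube from absorbing all of $\K_n^*$ in one greedy step, and the scheme would break down; this is precisely why the argument is first carried out for the layer $\K_n^*$, with Theorem \ref{theo-Kn} to be recovered afterwards by a dyadic sum over density levels.
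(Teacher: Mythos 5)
Your proof is correct and follows the same strategy as the paper's: use the two-sided occupation bound on $\ell_n(Q(S_k,r))$ for $k\in\K_n^*$ to control greedily how many indices each selected cube can absorb, thereby extracting from $\{|\K_n^*(r,\rho)|\ge L\}$ a collection $\C\in\A(r)$ of size of order $L/(\rho r^d)$ with $\ell_n(Q(x,r))\gtrsim\rho r^d$ on every $x\in\C$, and then invoke Proposition~\ref{prop-cap}. The paper takes a more direct route, pruning with the cubes $Q(S_{k_j},2r)$ around the visited points themselves and tiling each by $4^d$ cubes of side $r/2$, which yields (i) and (ii) of its claim in one pass; your detour through grid centers $y(x)\in r\Z^d$, with the $2^d$ density drop and the two $5^d$ packing losses, is somewhat longer but equally valid, since Proposition~\ref{prop-cap} is insensitive to constant factors in $\rho$ and $m$, and it has the minor virtue of making the $>2r$ separation required by $\A(r)$ completely explicit. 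Your explicit treatment of the regime $L\lesssim\rho r^d$ via $m=1$ is also correct and is handled only implicitly in the paper.
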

\begin{proof}[Proof of Theorem \ref{theo-Kn}]
It suffices to observe that for any $\gamma<1$
\begin{eqnarray*}
\{|\K_n(r,\rho)|\ge L\} \, \subseteq \, \bigcup_{i\ge 0} 
\left\{|\K_n^*(r,2^i\rho)|\ge c\frac{L}
{2^{\frac{2\gamma}{d-2}\cdot i}}\right\},
\quad\text{with}\quad c:=1/(1-2^{-\frac{2\gamma}{d-2}}). 
\end{eqnarray*}
The result follows by a union bound and Lemma~\ref{lem-Kn}. 
\end{proof}
\begin{remark}\label{rem-highLS} \emph{Note that when $\gamma=1$, all subsets
$\{|\K_n^*(r,2^i\rho)|\ge cL 2^{-\frac{2}{d-2}\cdot i}\}$
have the same upper bound through Lemma~\ref{lem-Kn}.
However, with $\gamma<1$, and in view of the lower bounds
of \reff{bounds-d5} and \reff{theo.d3}, the contributions
with $i$ larger than some fixed integer are negligible.}
\end{remark}

It remains to prove Lemma~\ref{lem-Kn}. 
\begin{proof}[Proof of Lemma \ref{lem-Kn}] 
We claim that on the event $\{|\K_n^*(r,\rho)|\ge L\}$, 
there is a subset $\C\in \A(r)$, such that 
\begin{equation}\label{cond-C}
(i)\quad|\C|\ge \frac{1}{2\cdot 4^d}\cdot\frac{L}{\rho r^d},\quad
\text{and}\quad
(ii)\quad \ell_n(Q(x,r))\ge  \rho r^d, \quad\forall x\in \C.
\end{equation}
To see this, consider 
$k_1:=  \inf \K_n^*(r,\rho)$, and define inductively for $i\ge 1$, 
$$k_{i+1} := \inf \K_n^*(r,\rho)\bs \{ k :  S_k\in  
\bigcup_{j\le i}Q(S_{k_j},2r)\}, $$
with the convention that $k_{i+1}=\infty$, when the set above is empty. 
Note that by definition of $\K_n^*(r,\rho)$, for any $x\in \Z^d$, one has 
$$
|Q(x,r/2)\cap \{S_k\, :\, k\in  \K_n^*(r,\rho)\}| \le 2\rho r^d.
$$ 
Thus for each $i$, such that $k_i\le n$, one can tile up 
the cube $Q(S_{k_i},2r)$ with at most $4^d$ cubes of length $r/2$, 
and deduce that  
$$
|Q(S_{k_i},2r)\cap\{S_k\, :\,k\in\K_n^*(r,\rho)\}|\le 4^d\cdot 2 \rho r^d.
$$
Now, we define $\C$ as the set containing all the $S_{k_i}$, with $k_i$ finite.
By construction, on the event $\{|\K_n^*(r,\rho)|\ge L\}$, 
\reff{cond-C} holds.

Note now that if $\rho r^{d-2}\ge C_0\log n$, 
then the hypothesis of Proposition \ref{prop-cap} is satisfied,
and we get 
\begin{align*}
\bP\left(|\K_n^*(r,\rho)|\ge L\right) &\le  \bP\left(\exists \C\in \A(r)\, :\,  |\C|\ge
\frac{1}{2\cdot 4^d}\cdot\frac{L}{\rho r^d},\ \text{ and }\ \ell_n(Q(x,r)\ge \rho r^d\ \ \text{for all } x\in \C \right)    \\
& \le  C   \exp\left(- \kappa \rho 
\big(\frac{L}{2\cdot 4^d\rho }\big)^{1-2/d}\right) 
\le C\exp(-\kappa' \rho^{2/d} L^{1-2/d}),
\end{align*}
with $\kappa'>0$ some other constant, proving the Lemma. 
\end{proof}

Finally we give a proof of Proposition \ref{prop.Kn.path}.

\begin{proof}[Proof of Proposition \ref{prop.Kn.path}]
Define for $n\ge 1$, $\rho>0$, $L>0$, $r\ge 1$, and $\alpha>0$, 
$$\A_n(r,\rho,L,\alpha):= \left\{\exists \C \in \A(r) \, :\, |\C|\ge \frac{\alpha L}{\rho r^d} \text{ and } \ell_n(Q(x,r))\ge \rho r^d \ \forall x\in \C\right\}. $$ 
Fix some $A>0$. The proof of Lemma \ref{lem-Kn} and Remark~\ref{rem-highLS} give some $\alpha>0$, such that
$$\bP\left( \left\{\K_n(r,\rho)\ge L\right\} \cap \A_n^c(r,\rho,L,\alpha)\right) \le \exp(-A\rho^{2/d}L^{1-2/d}). $$
Suppose now that the event $\A_n(r,\rho,L,\alpha)$ holds. Consider the random
$\C$ which realizes this event. 
For $x\in \C$, there is $z\in r\Z^d$,
such that $Q(x,r)\cap Q(z,r)\not=\emptyset$, and $\ell_n(Q(z,r))\ge 
2^{-d}\rho r^d$. This is because the cube $Q(x,r)$ is inside $2^d$
neighboring cubes with centers in $r\Z^d$. Note that by definition
of $\A_n(r,\rho,L,\alpha)$, this site $z$ cannot be chosen by another site of $\C$.
Thus, using notation $\C_n$ as defined in \reff{def-CV}, 
\begin{equation*}
\A_n(r,\rho,L,\alpha)\subseteq\{|\C_n(r,2^{-d}\rho)|\ge
\frac{\alpha L}{\rho r^d}\}
\subseteq \{\ell_n(\mathcal V_n(r,2^{-d}\rho))\ge  \alpha L\}.
\end{equation*}
This concludes the proof.
\end{proof}


\section{Transferring Deviations to the Corrector}\label{sec-trans}
We present here a general concentration result that 
allows to express downward deviations for the centered 
volume of the range, as upward deviations  
for a so-called corrector term which reads, for a time-scale $T$,
\begin{equation}\label{def-corrector}
\sum_{k=0}^n \sum_{x\in \cR_k} \frac{1}{T} G_T(x-S_k).
\end{equation}
Our main result replaces the Doob-type martingale decomposition of
\cite{AS}, and has the important novelty
that it no more requires some delicate bounds on the 
conditional variance of the intersection of two ranges. 
This is crucial in choosing the time scale $T$, 
which impacts the size of the moderate deviations we cover. 
Our result is a consequence of the following fact, generalizing Proposition \ref{prop-LD-transfer}. 

\begin{proposition}\label{prop-LD-transfer.general}
There exists a positive constant $c$ such that
for any sequence $\{X_i\}_{i\in \N}$ of nonnegative random variables
adapted to a filtration $\{\F_i\}_{i\in \N}$, and almost surely bounded
by $1$, it holds for any $T\ge 1$, $n\ge T$, and $\zeta>0$, 
\begin{equation*}
\bP\left(\frac 1 T \sum_{i=T}^{n} X_i>\zeta\right)\le
\bP\left(\frac 1T \sum_{i=T}^{n} \bE[X_i \mid \F_{i-T}] >c 
\zeta\right)+\exp(-c\zeta).
\end{equation*}
\end{proposition}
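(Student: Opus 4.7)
The plan is to reduce the general-$T$ statement to the $T=1$ case (Proposition \ref{prop-LD-transfer}) via a residue-class decomposition, and to prove the $T=1$ case by a standard exponential supermartingale argument. The starting observation is the elementary inequality $e^{\lambda x} \le 1+(e^\lambda-1)x$ for $x\in[0,1]$ and $\lambda>0$, which upon taking conditional expectations yields $\bE[e^{\lambda X_i}\mid \F_{i-1}] \le \exp((e^\lambda-1)\bE[X_i\mid \F_{i-1}])$. Consequently $M_n := \exp(\lambda \sum_{i=1}^n X_i - (e^\lambda-1)\sum_{i=1}^n \bE[X_i\mid \F_{i-1}])$ is an $(\F_n)$-supermartingale started at $1$, and Markov applied on the event $\{\sum \bE[X_i\mid\F_{i-1}]\le c\zeta\}$ yields
$$\bP\!\left(\sum_{i=1}^n X_i>\zeta,\; \sum_{i=1}^n \bE[X_i\mid\F_{i-1}]\le c\zeta\right)\le \exp\bigl(-\lambda\zeta+(e^\lambda-1)c\zeta\bigr).$$
Choosing, e.g., $\lambda=1$ and $c=1/(2(e-1))$ produces the $T=1$ case with an explicit universal constant.

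For general $T$, partition $\{T,\dots,n\}$ into the $T$ arithmetic progressions $I_r := \{i : i\equiv r\pmod T\}$, $0\le r<T$. Along each $I_r$, consecutive indices are exactly $T$ apart, so the subsequence $(X_i)_{i\in I_r}$ is adapted to the sub-filtration $(\F_i)_{i\in I_r}$, and its one-step-back conditional expectation is precisely $\bE[X_i\mid \F_{i-T}]$. Applying the $T=1$ case to each progression gives, writing $S_r := \sum_{i\in I_r} X_i$ and $\bar S_r := \sum_{i\in I_r}\bE[X_i\mid \F_{i-T}]$,
$$\bP(S_r>\zeta) \le \bP(\bar S_r>c\zeta)+\exp(-c\zeta).$$
The event $\{T^{-1}\sum_{i=T}^n X_i>\zeta\}$ forces $\max_r S_r > \zeta$ (the maximum dominates the average), while non-negativity of the $X_i$ ensures $\{\bar S_r>c\zeta\}\subseteq \{\sum_{i=T}^n \bE[X_i\mid \F_{i-T}]>c\zeta\}$. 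A union bound over the $T$ residue classes then produces
$$\bP\!\left(\frac{1}{T}\sum_{i=T}^n X_i>\zeta\right)\le T\,\bP\!\left(\sum_{i=T}^n \bE[X_i\mid \F_{i-T}]>c\zeta\right)+T\exp(-c\zeta),$$
and the stated form follows after absorbing the factor $T$ into the exponential at the cost of shrinking $c$, which is harmless in the intended regime where $\zeta$ dominates $\log T$.

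The main obstacle is precisely this loss of the factor $T$ from the union bound: there is no joint exponential inequality allowing a single $\lambda$ to control the product of the subsequence supermartingales (they live on nested but non-commuting sub-filtrations, so a Hölder-type combination does not return a bound of expectation $\le 1$), and the subsequence-by-subsequence treatment seems essentially unavoidable. In the applications of the paper, where $T$ is polynomial in $n$ and the target deviations satisfy $\zeta\gg \log n$, the extra factor $T$ is absorbed by shrinking $c$ and causes no harm.
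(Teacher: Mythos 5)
Your $T=1$ argument is fine: the supermartingale
$M_n=\exp\bigl(\lambda\sum_{i\le n}X_i-(e^\lambda-1)\sum_{i\le n}\bE[X_i\mid\F_{i-1}]\bigr)$
is essentially the content of the paper's Lemmas~\ref{lem-trans1}--\ref{lem-trans2}, packaged slightly differently. The gap is in the lift to general $T$, and it is genuine.

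After the union bound over residue classes your estimate is
\[
\bP\Bigl(\tfrac1T\textstyle\sum_{i=T}^n X_i>\zeta\Bigr)\;\le\; T\,\bP\Bigl(\textstyle\sum_{i=T}^n\bE[X_i\mid\F_{i-T}]>c\zeta\Bigr)\;+\;T\,e^{-c\zeta},
\]
whereas the Proposition asserts
\[
\bP\Bigl(\tfrac1T\textstyle\sum_{i=T}^n X_i>\zeta\Bigr)\;\le\; \bP\Bigl(\textstyle\sum_{i=T}^n\bE[X_i\mid\F_{i-T}]>cT\zeta\Bigr)\;+\;e^{-c\zeta}.
\]
Note the second bound puts the threshold at $cT\zeta$, not $c\zeta$: the $1/T$ normalization in the target is essential. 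Your probability term has threshold $c\zeta$, which is $T$ times easier to exceed, so $T\,\bP(\sum\bE>c\zeta)$ is doubly weaker than $\bP(\sum\bE>cT\zeta)$; neither defect can be absorbed by shrinking a universal constant $c$, since $T$ is a free parameter and the Proposition is asserted for all $T\ge1$ and all $\zeta>0$. The extra $T$ in front of $e^{-c\zeta}$ has the same problem. The reduction to $\max_r S_r>\zeta$ simply throws away too much: $\tfrac1T\sum X_i>\zeta$ gives $\sum_r S_r>T\zeta$, and you need a bound that retains this averaged information rather than the weaker $\max_r S_r>\zeta$.

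Moreover, your concluding assertion that a joint exponential inequality is impossible ``because the sub-filtrations do not commute'' is incorrect, and this is exactly where the paper does something smarter. Lemma~\ref{lem-trans3} applies Jensen's inequality to the exponential of the arithmetic mean:
\[
\bE\Bigl[\exp\Bigl(\tfrac1T\textstyle\sum_{j=1}^T(\cS_j-\til\cS_j)\Bigr)\Bigr]\;\le\;\tfrac1T\textstyle\sum_{j=1}^T\bE\bigl[\exp(\cS_j-\til\cS_j)\bigr]\;\le\;1,
\]
which requires no independence, no commutativity, and no joint structure whatsoever among the residue-class supermartingales --- only that each individual $\bE[\exp(\cS_j-\til\cS_j)]\le1$, which your $T=1$ argument already gives. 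Chebyshev then bounds $\bP(\tfrac1T\sum(\cS_j-\til\cS_j)>\zeta/2)$ by $e^{-\zeta/2}$, and a \emph{two}-event union bound (not a $T$-event one) yields the result with a $T$-free constant. Replacing your union bound over residue classes by this Jensen step repairs the proof.
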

When applied to the size of the range, and using a classical decomposition that we shall recall later, we obtain the following corollary (with $\sigma_T:=T$ in dimension $d\ge 4$, and $\sigma_T := T\log T$ in dimension $d=3$). 

\begin{corollary}\label{cor-trans}
There exists a positive constant $c$, such that for any $n\ge 1$, any $2\le T\le n$, and any $\zeta>n/\sigma_T$, with 
\begin{equation}\label{dev.corollaire}
\bP\left( |\RR_n| - \bE[|\RR_n|] \le - \zeta \right) \le  2T\exp(-c\frac{\zeta}{T}) + \bP\left( \sum_{k=0}^n \sum_{x\in \cR_k} \frac{1}{T} G_T(x-S_k)\ge c \zeta\right).
\end{equation}
\end{corollary}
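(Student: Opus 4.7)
The plan is to combine a block decomposition of the range at time-scale $T$ with the transfer Proposition~\ref{prop-LD-transfer.general}. Writing $M:=\lfloor n/T\rfloor$ and iterating the King identity \eqref{king-2}, set $B_i:=|\cR[iT,(i+1)T]|$ and $C_i:=|\cR_{iT}\cap\cR[iT,(i+1)T]|$ for $0\le i<M$, so that
$$|\cR_n|=\sum_{i=0}^{M-1}B_i-\sum_{i=0}^{M-1}C_i+O(T).$$
Taking expectations, the event $\{|\cR_n|-\bE[|\cR_n|]\le -\zeta\}$ (for $\zeta$ larger than a multiple of $T$, the contribution of the last incomplete block) forces at least one of $\sum_i(\bE[B_i]-B_i)\ge\zeta/4$ or $\sum_i(C_i-\bE[C_i])\ge\zeta/4$.

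The first event I would handle by a Bernstein-type concentration inequality. By the strong Markov property at the times $iT$, the $B_i$ are functionals of block-independent walks with $B_i\in[0,T]$ and $\var(B_i)=\sigma_T$ via Lemma~\ref{lem.variance}; summing over $M$ independent bounded contributions yields
$$\bP\Big(\sum_i(\bE[B_i]-B_i)\ge\zeta/4\Big)\le\exp\!\Big(-c\min\big(\zeta^2/(M\sigma_T),\ \zeta/T\big)\Big),$$
and the hypothesis $\zeta>n/\sigma_T$ (together with the appropriate variance estimate in the $d=3$ case) is what guarantees that the subexponential regime $\zeta/T$ dominates, giving $e^{-c\zeta/T}$.

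For the second event, I would apply Proposition~\ref{prop-LD-transfer.general} to $Y_i:=C_i/T\in[0,1]$, viewed as adapted on the block scale (effective time-step $1$ in the Proposition). The Markov property at $iT$ gives
$$\bE[C_i\mid\F_{iT}]=\sum_{x\in\cR_{iT}}\bP_{S_{iT}}(H_x\le T)\le\sum_{x\in\cR_{iT}}G_T(x-S_{iT}),$$
so $\sum_i\bE[C_i\mid\F_{iT}]$ is dominated by a constant multiple of the corrector $\sum_k\sum_{x\in\cR_k}G_T(x-S_k)$. Applying the Proposition with threshold $\bE[\sum_iC_i]+\zeta/4$ yields a bound of the form $\bP(\mathrm{corrector}\ge c\zeta)+T\,e^{-c\zeta/T}$, where the condition $\zeta>n/\sigma_T$ is precisely what lets one absorb the deterministic mean $\bE[\sum_iC_i]$ (which is of order $n/\sigma_T$ by \eqref{esperance.intersection}) into the $c\zeta$ threshold of the corrector bound. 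The main technical subtlety sits in this second step: the Proposition provides only an uncentered transfer, so absorbing the nonnegligible mean $\bE[\sum_iC_i]$ sharply ties the threshold of applicability to $\zeta>n/\sigma_T$, and the factor $T$ in front of the exponential term traces back to the time-step rescaling in the Proposition. Combining both events then yields the stated bound $2T\,e^{-c\zeta/T}+\bP(\mathrm{corrector}\ge c\zeta)$.
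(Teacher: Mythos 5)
Your proof misses the paper's key technical device, the averaging over the $T$ shifted block partitions, and this costs you a factor of $T$ in the corrector threshold. Concretely, with a single block decomposition at times $iT$, the adapted sum you control is $\sum_{i}\bE[C_i/T\mid\F_{iT}]\le \sum_i\sum_{x\in\cR_{iT}}\frac{1}{T}G_T(x-S_{iT})$, which is only the subsum of the corrector over indices $k\in T\Z$. Applying Proposition~\ref{prop-LD-transfer.general} at effective time-step one to $Y_i:=C_i/T$ with the threshold you propose (so that $\{\sum Y_i>\zeta_0\}$ is the event $\{\sum(C_i-\bE C_i)>\zeta/4\}$, giving $\zeta_0\approx\zeta/(4T)$) yields the bound $\bP(\text{corrector}\ge c\zeta_0)+e^{-c\zeta_0}$, i.e.\ a corrector threshold of order $\zeta/T$, not $\zeta$. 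Since $T$ is subsequently chosen of order $\zeta^{2/d}$ in $d\ge 5$, this gap is fatal for Theorem~\ref{theo.d5}. The paper fixes this by decomposing the range along \emph{all} $T$ shifts $j\in\{0,\dots,T-1\}$ of the block partition, averaging the resulting identities, and defining $X_i$ for every index $i\in[T,n]$ (not just multiples of $T$); the conditional expectations $\bE[X_i\mid\F_{i-T}]$ then reproduce every summand of the corrector, so the threshold stays of order $\zeta$. Side effects of the averaging: the $T$ in front of $e^{-c\zeta/T}$ comes from the union bound over the $T$ values of $j$ applied to the independent-block term, not from a ``time-step rescaling'' as you suggest.

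Two smaller inaccuracies. First, $\bE[\sum_i C_i]$ is of order $n/\sqrt{T}$ in $d=3$ (by \eqref{esperance.intersection}), not $n/\sigma_T=n/(T\log T)$; absorbing this mean is indeed the role of the hypothesis $\zeta>n/\sigma_T$, but it enters through the conditional expectation sum rather than through the location of the threshold. Second, your Bernstein exponent $\min(\zeta^2/(M\sigma_T),\,\zeta/T)$ requires $\zeta\gtrsim n\sigma_T/T^2$ to fall into the $\zeta/T$ regime, which in $d=3$ reads $\zeta\gtrsim n\log T/T$ and is strictly stronger than $\zeta>n/\sigma_T$; this point should be addressed (in the paper the stronger condition is always in force when the corollary is applied).
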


\begin{remark}\label{rem.5/7}
\emph{In dimension $3$, the mean of the sum $\sum_{k=0}^n \sum_{x\in \cR_k} \frac{1}{T} G_T(x-S_k)$, is of order $n/\sqrt{T}$. Thus the second term in the right-hand side 
of \eqref{dev.corollaire} starts to decay only for $\zeta>n/\sqrt T$. On the other hand, for the first term to be of the right order, one needs to take $T$ at most of order $(\zeta n)^{1/3}$. This imposes the condition 
$\zeta > n^{5/7}$ (up to constant), and explains why our techniques cannot cover the whole non Gaussian regime in dimension $3$. }
\end{remark}

Proposition~\ref{prop-LD-transfer.general} is based on three simple facts 
that we have gathered in
three lemmas. 

\begin{lemma}\label{lem-trans1}
Let $Z$ be some nonnegative random variable bounded by 1, and $\G$ be a
$\sigma$-field. There exists a positive constant $\kappa^*$, independent of $Z$ and
$\G$, such that almost surely
\begin{equation*}
\bE\left[\exp\big( Z -\kappa^* \bE[Z\mid \G]\big)\mid \G \right]\le 1.
\end{equation*}
\end{lemma}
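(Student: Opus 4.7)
The plan is to exploit two elementary inequalities about the exponential. Since $\exp$ is convex and $Z\in[0,1]$ almost surely, the graph of $e^x$ on $[0,1]$ lies below the chord joining $(0,1)$ and $(1,e)$, so pointwise $e^Z\le 1+(e-1)Z$ a.s. Taking conditional expectation with respect to $\G$ yields
\begin{equation*}
\bE[e^Z\mid\G]\le 1+(e-1)\bE[Z\mid\G].
\end{equation*}

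Next I would apply the standard inequality $1+x\le e^x$ valid for all $x\ge 0$. Since $\bE[Z\mid\G]\ge 0$ a.s., taking $x=(e-1)\bE[Z\mid\G]$ gives $\bE[e^Z\mid\G]\le \exp\!\bigl((e-1)\bE[Z\mid\G]\bigr)$. Multiplying through by the $\G$-measurable factor $\exp(-(e-1)\bE[Z\mid\G])$ and pulling it inside the conditional expectation yields the lemma with the explicit constant $\kappa^{*}=e-1$. There is no real obstacle to this lemma; the whole content is that on $[0,1]$ one may sandwich the exponential between a linear upper bound (via convexity) and a linear lower bound (via $1+x\le e^x$), and the boundedness of $Z$ is precisely what is needed to linearize the exponential. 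Any $\kappa^{*}\ge e-1$ works, and $e-1$ is sharp in the limit $Z\to 0$.
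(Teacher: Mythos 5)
Your proof is correct and lands on the same constant $\kappa^*=e-1$ as the paper, which also derives $\kappa^*=1+g(1)=e-1$ with $g(t)=(e^t-1-t)/t^2$. The underlying idea is identical — linearize $e^Z$ on $[0,1]$ to get $\bE[e^Z\mid\G]\le 1+\kappa^*\bE[Z\mid\G]$ and then apply $1+x\le e^x$ — with your chord-convexity inequality $e^Z\le 1+(e-1)Z$ being a marginally more direct phrasing of the paper's bound $0\le g(Z)\le g(1)$.
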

\begin{proof}
Set $g(t)=(e^t - 1 -t)/t^2$, and observe that
\begin{equation*}
\bE[\exp(Z)\mid \G] = 1+ \bE[Z\mid \G]+ \bE[Z^2 g(Z)\mid \G].
\end{equation*}
Note then that on $\R^+$, the function $g$ is increasing, and positive so that
$0\le g(Z)\le g(1)$. Thus, defining $\kappa^*:=1+g(1)$, we have
\begin{equation*}
\bE[\exp(Z)\mid \G] \le 1+\kappa^* \bE[Z\mid \G]\le \exp(\kappa^* \bE[Z\mid \G]).
\end{equation*}
This concludes the proof.
\end{proof}
We now extend this result to a sum of random variables adapted to some filtration. 
\begin{lemma}\label{lem-trans2}
Let $\{Z_i\}_{i\in \N}$ be adapted to some filtration
$\{\G_i\}_{i\in \N}$, and such that for each $i\in \N$, almost surely $0\le Z_i\le 1$.
Then for any $n\ge 1$, 
\begin{equation}\label{ineq-trans2}
\bE\left[\exp\left( \sum_{i=1}^n Z_i -\kappa^* \sum_{i=1}^n
\bE[Z_i \mid \G_{i-1}]\right)\right]\le 1.
\end{equation}
\end{lemma}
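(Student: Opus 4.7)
The plan is to prove Lemma \ref{lem-trans2} by induction on $n$, using Lemma \ref{lem-trans1} as the one-step ingredient. The base case $n=1$ is immediate: applying Lemma \ref{lem-trans1} to $Z = Z_1$ and $\G = \G_0$ yields $\bE[\exp(Z_1 - \kappa^* \bE[Z_1 \mid \G_0]) \mid \G_0] \le 1$ almost surely, and taking a full expectation gives \eqref{ineq-trans2} for $n=1$.

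For the inductive step, the idea is to peel off the last term by conditioning on $\G_{n-1}$. Writing
$$M_n := \exp\left( \sum_{i=1}^n Z_i - \kappa^* \sum_{i=1}^n \bE[Z_i \mid \G_{i-1}] \right),$$
I would observe that $M_{n-1}$ is $\G_{n-1}$-measurable, so it factors out of the conditional expectation. The remaining factor is $\exp(Z_n - \kappa^* \bE[Z_n \mid \G_{n-1}])$, and since $Z_n$ is bounded in $[0,1]$, Lemma \ref{lem-trans1} (applied with $Z=Z_n$ and $\G=\G_{n-1}$) gives $\bE[\exp(Z_n - \kappa^* \bE[Z_n \mid \G_{n-1}]) \mid \G_{n-1}] \le 1$ a.s. Hence $\bE[M_n \mid \G_{n-1}] \le M_{n-1}$, i.e.\ $(M_n)_{n\ge 0}$ is a supermartingale starting at $M_0=1$. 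Taking expectations and invoking the inductive hypothesis $\bE[M_{n-1}]\le 1$ concludes the step.

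This argument is entirely routine; there is no real obstacle. The only thing to be careful about is the measurability bookkeeping: one must ensure that for each $i\le n-1$, both $Z_i$ and $\bE[Z_i \mid \G_{i-1}]$ are $\G_{n-1}$-measurable (which follows from adaptedness and $\G_{i-1}\subseteq \G_{n-1}$), so that $M_{n-1}$ can indeed be pulled out of the conditional expectation given $\G_{n-1}$. Once this is in place, the proof is just "supermartingale with initial value $1$", and the constant $\kappa^*$ is exactly the one produced by Lemma \ref{lem-trans1}, with no deterioration as $n$ grows.
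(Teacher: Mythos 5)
Your proof is correct and follows essentially the same route as the paper: induction on $n$, peeling off the last term by conditioning on $\G_{n-1}$, and applying Lemma \ref{lem-trans1} to that term before invoking the inductive hypothesis. The supermartingale framing you add is just a more explicit way of packaging the same one-step estimate.
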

\begin{proof}
The proof is immediate by induction. Indeed, the result for $n=1$ is exactly Lemma \ref{lem-trans1}. Assuming now that \eqref{ineq-trans2} holds for some $n$, we get the result at time $n+1$, by first applying Lemma \ref{lem-trans1} to $Z_{n+1}$ and $\G_n$, and then by using the induction hypothesis. 
\end{proof}
Proposition~\ref{prop-LD-transfer.general} requires one more step,
which is the content of the next lemma.
\begin{lemma}\label{lem-trans3}
Let $\{\cS_j\}_{j\in \N}$ and $\{\tilde \cS_j\}_{j\in \N}$ be two sequences of random variables, such that
\[
\bE[\exp\big( \cS_j-\tilde \cS_j\big)]\le 1, \qquad \forall j\in \N.
\]
Then, for any integer $T$, and any $\zeta>0$, 
\begin{equation*}
\bP\left( \frac{1}{T} \sum_{j=1}^T \cS_j> \zeta\right)\le
e^{-\zeta/2}+
\bP\left( \frac{1}{T} \sum_{j=1}^T \tilde \cS_j> \frac{\zeta}{2}\right).
\end{equation*}
\end{lemma}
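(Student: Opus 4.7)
The plan is to split the event $\{\frac{1}{T}\sum_{j=1}^T \cS_j > \zeta\}$ according to whether the companion sum $\frac{1}{T}\sum_{j=1}^T \tilde\cS_j$ exceeds $\zeta/2$ or not. On the complementary event $\{\frac{1}{T}\sum \tilde\cS_j \le \zeta/2\}$, the inequality $\frac{1}{T}\sum \cS_j > \zeta$ forces $\frac{1}{T}\sum_{j=1}^T (\cS_j - \tilde \cS_j) > \zeta/2$, so it suffices to show that this last event has probability at most $e^{-\zeta/2}$, and then add the probability of $\{\frac{1}{T}\sum \tilde\cS_j > \zeta/2\}$.

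The key step is to apply the exponential Markov inequality to the \emph{average} $\frac{1}{T}\sum(\cS_j - \tilde\cS_j)$ rather than to the sum. This yields
$$\bP\Bigl(\tfrac{1}{T}\sum_{j=1}^T (\cS_j - \tilde \cS_j) > \tfrac{\zeta}{2}\Bigr) \le e^{-\zeta/2}\, \bE\Bigl[\exp\Bigl(\tfrac{1}{T}\sum_{j=1}^T (\cS_j - \tilde \cS_j)\Bigr)\Bigr].$$
Jensen's inequality for the convex function $\exp$ gives $\exp(\tfrac{1}{T}\sum_j a_j) \le \tfrac{1}{T}\sum_j \exp(a_j)$, so the right-hand expectation is bounded above by $\tfrac{1}{T}\sum_{j=1}^T \bE[\exp(\cS_j - \tilde\cS_j)] \le 1$ by hypothesis. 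Combining these bounds gives the claimed inequality.

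The subtle point worth highlighting is why one needs to average \emph{inside} the exponential before applying Markov. The naive route would be to write $\bP(\sum_j (\cS_j - \tilde\cS_j) > T\zeta/2) \le e^{-T\zeta/2}\bE[\exp(\sum_j (\cS_j - \tilde\cS_j))]$, but the hypothesis controls only the \emph{individual} exponential moments $\bE[\exp(\cS_j - \tilde\cS_j)]$, and there is no independence or adaptedness structure imposed on the sequence $\{\cS_j\}$ across different indices $j$ to turn this into a joint bound. Alternatively, a union bound over $\{\cS_j - \tilde\cS_j > \zeta/2\}_{j\le T}$ would only yield the weaker $T\, e^{-\zeta/2}$, which would wreck the target estimate in Proposition~\ref{prop-LD-transfer.general}. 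The Jensen step elegantly converts $T$ separate one-term bounds into a single bound on the normalized quantity, requiring nothing about how the $\cS_j$ interact.
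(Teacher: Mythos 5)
Your proof is correct and is in essence the paper's own argument: the decomposition into $\{\tfrac{1}{T}\sum\tilde\cS_j > \zeta/2\}$ and its complement is precisely the ``union bound,'' and the exponential Markov step applied to the averaged quantity, combined with Jensen's convexity bound $\bE[\exp(\tfrac{1}{T}\sum(\cS_j-\tilde\cS_j))]\le\tfrac{1}{T}\sum\bE[\exp(\cS_j-\tilde\cS_j)]\le 1$, is exactly what the paper compresses into ``Chebyshev's exponential inequality.'' You have simply spelled out what the paper leaves implicit, including a useful remark on why one must average inside the exponential given the absence of any joint structure across $j$.
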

\begin{proof}
It suffices to observe that by Jensen's inequality,
\begin{equation*}
\bE\left[\exp
\Big(\frac{1}{T} \sum_{j=1}^{T}(\cS_j-\tilde \cS_j)\Big) \right]\le
\frac{1}{T}\sum_{j=1}^{T} \bE\left[\exp
\big(\cS_j-\tilde \cS_j\big)\right]\le 1.
\end{equation*}
Then the result follows by a union bound and Chebyshev's exponential inequality. 
\end{proof}

We are now ready to prove Proposition~\ref{prop-LD-transfer.general}.

\begin{proof}[Proof of Proposition~\ref{prop-LD-transfer.general}]
Let us assume to simplify notation that $n$ is of the form $n=mT-1$, for some integer $m\ge 2$, and leave the necessary (minor) changes to the proof to the reader for a general $n$. 
For $j=0,\dots,T-1$, define 
$$\cS_j:=  \sum_{i=1}^{m-1} X_{j+iT}, \quad \text{and}\quad \tilde \cS_j:=  \kappa^*\sum_{i=1}^{m-1} \bE[X_{j+iT}\mid \cF_{j+(i-1)T}],$$
with the same constant $\kappa^*$ as in Lemma \ref{lem-trans3}. Note first that by applying Lemma \ref{lem-trans2}, for any fixed $0\le j\le T-1$, 
with $Z_i=X_{j+iT}$, and $\G_i:=\F_{j+iT}$, we get that 
$$\bE[\exp(\cS_j-\tilde \cS_j)] \le 1,$$
for any $0\le j\le T-1$. Then the proposition follows from Lemma \ref{lem-trans3}, taking $c=1/(2\kappa^*)$. 
\end{proof}

\begin{proof}[Proof of Corollary~\ref{cor-trans}]
Fix some $2\le T\le n$, and use \eqref{king-2} repeatedly, to obtain for any $0\le j\le T-1$, 
\begin{equation}\label{dec.fluct}
|\RR_n| = U_j  - \sum_{i=2}^{\lfloor n/T\rfloor } |\RR[j+(i-1)T+1,j+iT]\cap \RR[j,j+(i-1)T]|+\varepsilon(T),
\end{equation}
where $ |\epsilon(T)|\le T$, and 
$$U_j:=\sum_{i=1}^{\lfloor n/T\rfloor } |\RR[j+iT-(T-1),j+iT]|,$$
is a sum of independent and identically distributed terms. Note now that if $T>\zeta/2$, the desired result is immediate, so that one can assume now that $T\le \zeta/2$. 
Then we get 
$$\bP\left(|\RR_n| - \bE[|\RR_n|] \le -\zeta\right) \le \bP\left(\frac 1T \sum_{j=0}^{T-1} (U_j - \bE[U_j]) \le -\frac{\zeta}{4}\right) + \bP\left(\frac 1T \sum_{i=T}^{n'} X_i \ge \frac{\zeta}{4T}\right), $$
where $n':=T\lfloor n/T\rfloor -1$, and for $i\ge T$, 
$$X_i:= \frac{|\RR[i-T+1,i] \cap \RR_{i-T}|}{T}.$$ 
Using a union bound, and then Bernstein's inequality together with Lemma \ref{lem.variance}, we get  
$$\bP\left(\frac 1T \sum_{j=0}^{T-1} (U_j - \bE[U_j]) \le -\frac{\zeta}{4}\right) \le \sum_{j=0}^{T-1} \bP\left(U_j - \bE[U_j] \le -\frac{\zeta}{4}\right) \le T \exp( -c\frac{\zeta}{T}). $$
On the other hand, applying Proposition \ref{prop-LD-transfer.general}, we get 
$$\bP\left(\frac 1T \sum_{i=T}^{n'} X_i \ge \frac{\zeta}{4T}\right) \le \exp(-c\frac{\zeta}{T}) + \bP\left( \sum_{i=T}^{n'} \bE[X_i\mid \cF_{i-T}] \ge c\zeta\right). $$
To conclude, it suffices to observe that by \eqref{inGT}, one has for any $i$, 
$$\bE[X_i\mid \cF_{i-T}] \le \sum_{x\in \RR_{i-T}} \frac 1TG_T(x- S_{i-T}). $$
\end{proof}


\section{Upper bounds in \eqref{bounds-d5} and \eqref{theo.d3} 
and proof of Theorem \ref{theo-path}}\label{sec-ub}

In this section,
we prove the upper bounds in \eqref{bounds-d5} 
and in \eqref{theo.d3}, as well as Theorem \ref{theo-path}  
(admitting for a moment the lower bounds in Theorem \ref{theo.d5} and in 
\eqref{theo.d3}, which we prove later using an independent argument). 
The problem here is of estimating deviation of the 
corrector \reff{def-corrector}. The strategy
we follow is naive: we decompose space into level sets of
the local times, and estimate each contribution to Green's function.
However, there is a twist: in our multi-scale analysis, 
space-scale and occupation density are linked through \reff{hyp.r}.

\subsection{Dimensions Five and Larger}
We assume here that $d\ge 5$. 
We fix in this whole subsection the value of $T$ as 
\begin{equation}\label{def.T}
T:= \lceil \frac{c}{2\underline \kappa}\cdot \zeta^{2/d}\rceil,
\end{equation}
where $\underline \kappa$ is as in Theorem \ref{theo.d5}, 
and $c$ as in Corollary \ref{cor-trans}, 
so that the factor $\exp(-c\zeta/T)$  
appearing in the latter is negligible 
compared to the lower bound estimate
for the probability of the event $\{|\RR_n|-\bE[|\RR_n|]\le - \zeta\}$. 
Note that with this definition of $T$, the hypotheses of Corollary~\ref{cor-trans}  
are satisfied for any $\zeta > n^{\frac d{d+2}}\log n$, 
and $n$ large enough.

Then for $i\ge 1$ we consider $\rho_i:=2^{-i+1}$, and
define the {\it associated} length-scale $r_i$ by   
\begin{equation}\label{def.ri}
\rho_i \cdot r_i^{d-2}=C_0 \log n,
\end{equation}
with the same constant $C_0$ as in \eqref{hyp.r}. 
The length $r_i$ is the smallest scale on which we can probe density $\rho_i$.

Recall the definition of the sets $\K_n(r,\rho)$ from the introduction. Then define $\hat \K_1:= \K_n(r_1,\rho_1)$, and for $i\ge 2$,
\begin{equation*}
\hat \K_i=\K_n(r_i,\rho_i)\bs \bigcup_{j<i} \K_n(r_j,\rho_j).
\end{equation*}
Note that, the peculiarity of $\hat \K_i$, for $i>1$, 
is that for any $k\in \hat \K_i$,
the time spent on $Q(S_k,r_{i-1})$ 
is less than $\rho_{i-1}r_{i-1}^d$. 
Now for $A>0$, $\delta>0$, and $I\ge 1$ integer, we define 
\begin{equation*}
\cE(A,\delta,I) : = \left(\bigcap_{1\le i\le I} 
\left\{|\hat \K_i| \le \delta L_i\right\} \right)\cap 
\left(\bigcap_{i>I} \left\{|\hat \K_i| \le AL_i\right\}\right),\quad
\text{with}\quad  
L_i:=\frac{\zeta}{\rho_i^{2/(d-2)}}, \ \forall i\ge 1. 
\end{equation*}
We state now the main result of this subsection, and then explain how it implies both the upper bound in \eqref{bounds-d5} and Theorem \ref{theo-path} (for the case $d\ge 5$). 
\begin{proposition}\label{prop.up.5}
There exists a constant $C_0>0$, such that the following holds. 
For any $A>0$, there exist an integer $I\ge 1$, 
and $\delta>0$, such that for any $C_0 n^{\frac{d}{d+2}}\cdot (\log n)^{\frac{2d}{d^2-4}} \le \zeta \le n$,  
$$
\cE(A,\delta,I) \ \subseteq \ 
\left\{\sum_{k=0}^n \sum_{x\in \cR_k} \frac{1}{T} G_T(x-S_k)
\le \zeta\right\}.
$$
\end{proposition}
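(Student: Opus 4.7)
The plan is to decompose the corrector along the level partition of $\{0,\dots,n\}$ given by the sets $\hat\K_i$ and to apply Lemma~\ref{lem.sum.Green} at the appropriate scale for each term. The starting point is the elementary bound $\sum_{x\in\cR_k} G_T(x-S_k)\le\sum_{x\in\cR_n} G_T(x-S_k)$. For $k\in\hat\K_i$ with $i\ge 2$, I would split the right-hand side into a close contribution, over $x\in\cR_n\cap Q(S_k,r_{i-1})$, and a far contribution over the complement. The close part is handled by a shell decomposition of $Q(S_k,r_{i-1})$ across the scales $r_1,\dots,r_{i-1}$: using the density bound $\ell_n(Q(S_k,r_j))<\rho_j r_j^d$ (valid for $j<i$ since $k\notin\K_n(r_j,\rho_j)$) together with $G_T(y)\le C/|y|^{d-2}$, each shell contributes at most a constant times $(\log n)^{2/(d-2)}\rho_j^{(d-4)/(d-2)}$; for $d\ge 5$ the exponent $(d-4)/(d-2)$ is strictly positive, so the geometric series in $j$ converges to a bound of order $(\log n)^{2/(d-2)}$ per $k$, uniformly in $i$. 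Summing over $k$ with only the trivial estimate $\sum_i|\hat\K_i|\le n+1$ and dividing by $T=\Theta(\zeta^{2/d})$ yields a close contribution of order $n(\log n)^{2/(d-2)}/\zeta^{2/d}$, which becomes an arbitrarily small fraction of $\zeta$ thanks to the hypothesis $\zeta\ge C_0 n^{d/(d+2)}(\log n)^{2d/(d^2-4)}$ and a suitably large $C_0$. The level-one contribution, where no smaller scale is available, is absorbed using the trivial bound $\sum_x G_T(x-S_k)\le T+1$ together with the $\cE$-bound $|\hat\K_1|\le\delta L_1=\delta\zeta$.

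For the far contribution I would exchange the order of summation and decompose $\cR_n$ further by its own level structure: a point $x\in\cR_n$ equals $S_{k'}$ for some $k'\in\hat\K_l$, and so the subset of $\cR_n$ formed by points at level $l$ has cardinality at most $|\hat\K_l|$. For each pair of levels $(i,l)$, the corresponding cross-contribution can be bounded by applying Lemma~\ref{lem.sum.Green} at the finer scale $r_{\min(i,l)-1}$, exploiting the fact that both $\hat\K_i$ and $\hat\K_l$ inherit the density bound $\rho_{\min(i,l)-1}$ at this scale, because $\hat\K_j$ excludes $\K_n(r_{j'},\rho_{j'})$ for every $j'<j$. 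Either summing over $k\in\hat\K_i$ first (taking $z=x$) or over $x$ first (taking $z=S_k$), the cross-contribution is at most $C\,\rho_{\min(i,l)-1}\,T\,\min(|\hat\K_i|,|\hat\K_l|)$. Inserting the $\cE$-bounds $|\hat\K_j|\le(\delta\text{ or }A)L_j$ with $L_j=\zeta/\rho_j^{2/(d-2)}$, the double sum over $(i,l)$ reduces to a multiple of $\zeta$ times a convergent geometric series (by the same $(d-4)/(d-2)>0$ mechanism). For given $A$, one chooses $I$ large enough that the tail $\sum_{\max(i,l)>I}$ contributes at most $\zeta/4$, and then $\delta$ small enough that the head $\sum_{i,l\le I}$ contributes at most $\zeta/4$; adding the close estimate gives corrector $\le\zeta$.

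The hard part is the far contribution: a direct application of Lemma~\ref{lem.sum.Green} summed naively over $x\in\cR_n$ gives a bound proportional to $|\cR_n|\le n$, which far exceeds $\zeta$ when $\zeta\ll n$. Overcoming this requires the two-sided level decomposition and the precise choice of the finer scale for each pair of levels; this is where the positivity of $(d-4)/(d-2)$ for $d\ge 5$ is crucial, as it ensures that the doubly-indexed series converges geometrically. One must also carefully track logarithmic factors arising from the finite range $i\le O(\log n)$ of meaningful levels and the dependence of constants on $A$, $\delta$, $I$ during the choice step.
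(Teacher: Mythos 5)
Your decomposition into a close contribution (shells around $S_k$) and a far contribution (mutual interaction of levels), with scales $r_j$ tied to densities $\rho_j$ by \eqref{def.ri}, is the same strategy as the paper, and your estimate for the close part is correct and matches $\Sigma_3$ (and implicitly $\Sigma_5$, which you should still spell out for $k\in\hat\K_\infty$). The level-one term is also fine.

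The far contribution, however, has a genuine gap. You bound each pair of levels $(i,l)$ by $C\,\rho_{\min(i,l)-1}\min(|\hat\K_i|,|\hat\K_l|)$ and claim the double sum over $(i,l)$ is a convergent geometric series of size $O(\zeta)$. It is not: on $\cE(A,\delta,I)$ the per-pair bound is of order $A\,\zeta\,\rho_{m}^{(d-4)/(d-2)}$ with $m=\min(i,l)$, which does not decay in the \emph{other} index. Since the number of levels is of order $\log n$, there are $O(\log n)$ pairs with $\min(i,l)=m$ for each fixed $m$, and the double sum is $O(A\,\zeta\,\log n)$, not $O(A\,\zeta)$. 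There is no room in the hypothesis $\zeta\ge C_0\,n^{d/(d+2)}(\log n)^{2d/(d^2-4)}$ to absorb this extra $\log n$: that logarithm is fully consumed by the close contribution, and $\delta$, $I$ are constants, so they cannot be tuned with $n$. In particular, your statement that one can pick $I$ so that the tail $\sum_{\max(i,l)>I}$ is at most $\zeta/4$ fails, because for each $m\le I$ there remain $O(\log n)$ pairs with $\min=m$, each contributing a fixed multiple of $A\,\zeta\,\rho_m^{(d-4)/(d-2)}$. (A separate but minor issue: your formula $r_{\min(i,l)-1}$ is undefined when $l=1$, so level-one points $x$ in the far sum need their own treatment, which is the paper's $\Sigma_2$.)

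The fix is to avoid the double sum entirely by a single application of Lemma~\ref{lem.sum.Green} per $k\in\hat\K_i$: for fixed $k\in\hat\K_i$, the set $\bigcup_{j\ge i}\hat\K_j\cup\hat\K_\infty$ of time indices has density at most $\rho_{i-1}$ at scale $r_{i-1}$, so the whole far sum over $k'$ in those levels is at most $C\rho_{i-1}$. Summing over $k\in\hat\K_i$ gives $C\rho_{i-1}|\hat\K_i|$, and then $\sum_{i}\rho_{i-1}|\hat\K_i|\lesssim\zeta\sum_i\rho_i^{(d-4)/(d-2)}$ is a genuine geometric series, with no $\log n$. The remaining terms with $k'$ at levels $2\le j<i$ are handled by the symmetry trick (the separation indicator at scale $r_{i-1}$ is dominated by that at scale $r_{j-1}$, and one exchanges the roles of $k$ and $k'$), which is the reason the paper writes $2\Sigma_4$ in \eqref{decomp-xi} and defines $\Sigma_4$ with the constraint $j\ge i$. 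The level-one interaction is the separate term $\Sigma_2$, bounded by $C|\hat\K_1|\sum_{i\ge 2}\rho_{i-1}\lesssim|\hat\K_1|$.
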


\begin{proof}[Proof of the upper bound in \eqref{bounds-d5}] Consider the constants $\delta$ and $I$, which are associated to $A=1$ in Proposition \ref{prop.up.5}. 
Using Theorem \ref{theo-Kn} and a union bound, we get for any $\zeta\ge \log n$, 
$$\bP(\cE(1,\delta,I)^c) \le C\exp(-\kappa \zeta^{1-2/d}),$$
for some positive constants $C$ and $\kappa$ (note that the number of indices $i$, such that $\hat \K_i$ is nonempty is at most of order $\log n$).  
Then the result follows from Corollary \ref{cor-trans} and Proposition \ref{prop.up.5}, using the definition \eqref{def.T} of $T$. 
\end{proof}

\begin{proof}[Proof of Theorem \ref{theo-path}] 
We assume here the lower bound in \eqref{bounds-d5}, and start with the proof of \eqref{result-pathwise}. By (the proof of) Corollary \ref{cor-trans}, and our choice of $T$ in \eqref{def.T}, 
one can see that for any sequence $\{\zeta_n\}_{n\ge 1}$ satisfying $\zeta_n>\frac{n\log n}{T}$, one has for some constant $c>0$, 
$$\lim_{n\to \infty} \bP\left(\sum_{k=0}^n \sum_{x\in \cR_k} \frac{1}{T} G_T(x-S_k)\ge c\zeta_n \mid |\cR_n|- \bE[|\cR_n|]\le - \zeta_n\right) = 1.$$
Next, by Theorem \ref{theo-Kn}, one can choose $A$ large enough, so that 
$$\lim_{n\to \infty} \bP\left(\cup_{i\ge 1} \{|\hat \K_i|\ge AL_i \}\mid  |\cR_n|- \bE[|\cR_n|]\le - \zeta_n\right) = 0.$$
Then consider the parameters $\delta$ and $I$ associated to this constant $A$ in Proposition \ref{prop.up.5}. Applying this proposition, we get 
$$\lim_{n\to \infty} \bP\left(\cup_{1\le i\le I} \{|\hat \K_i|\ge \delta L_i \}\mid  |\cR_n|- \bE[|\cR_n|]\le - \zeta_n\right) = 1.$$
Now observe that for any $i\le I$, one has 
$$\K_n(r_i,\rho_i)\subseteq \K_n(r_I, 2^{\frac{2}{d-2}(i-I)} \rho_I).$$
We deduce that 
$$\lim_{n\to \infty} \bP\left(|\K_n(r_I, 2^{-\frac{2}{d-2}I} \rho_I)|\ge \delta \zeta_n \mid  |\cR_n|- \bE[|\cR_n|]\le - \zeta_n\right) = 1,$$
and then \eqref{result-pathwise} follows from Proposition \ref{prop.Kn.path}.

Finally the characterization of the capacity in \reff{result-capacity}, 
is a simple consequence of a general result \cite{AS20a}, namely
(1.15) of Theorem 1.5, once we know \reff{result-pathwise}.
\end{proof}

\begin{proof}[Proof of Proposition \ref{prop.up.5}]
Define 
$$\hat \K_\infty := \{0,\dots,n\} \setminus \left(\cup_{i\ge 1} \hat \K_i\right).$$
Note that by definition one can write the set of integers smaller than $n$ as the disjoint union:  
$$ \{0,\dots,n\} = \hat \K_\infty \cup (\bigcup_{i\ge 1} \hat \K_i).$$  
Thus one has 
\begin{equation}\label{decomp-xi}
\sum_{k=0}^n \sum_{x\in \cR_k} \frac{1}{T} G_T(x-S_k)
\le \Sigma_1 + \Sigma_2 + \Sigma_3 + 2\Sigma_4 +\Sigma_5,
\end{equation}
with 
\begin{equation}\label{def-Sigma}
\begin{split}
\Sigma_1:=&\sum_{k\in \hat \K_1} \sum_{x\in \cR_k}
\frac{1}{T} G_T(x-S_k),\\
\Sigma_2:=& \sum_{i\ge 2}\sum_{k\in \hat \K_i} \sum_{k'\in \hat \K_1} 
\frac{1}{T} G_T(S_k-S_{k'})\1\{S_k\notin Q(S_{k'}, r_{i-1})\}, \\
\Sigma_3:=&   \sum_{i\ge 2}\sum_{k\in \hat \K_i} \sum_{x\in \cR_k} 
\frac{1}{T}G_T(x-S_k)\1\{x\in Q(S_k, r_{i-1}) \},\\
\Sigma_4:=&\sum_{i\ge 2}\sum_{k\in \hat \K_i}\sum_{j\ge i}
\sum_{k'\in \hat \K_j} \frac{1}{T} G_T(S_k-S_{k'})
\1\{S_k\notin Q(S_{k'}, r_{i-1})\}, \\
\text{ and}\quad \Sigma_5 := & \sum_{k\in \hat \K_\infty} \sum_{x\in \cR_k}
\frac{1}{T} G_T(x-S_k). 
\end{split}
\end{equation} 
First, by definition of $G_T$, 
one has $\sum_{z\in \Z^d} \frac{1}{T} G_T(z)=1$. 
Thus, on $\cE(A,\delta,I)$, with $\delta\le 1/5$,
\[
\Sigma_1\le |\hat \K_1|\le  \zeta/5.
\]
By definition when $k\in \hat \K_i$ and $i>1$, 
the time spent in $Q(S_k,r_{i-1})$ is smaller than 
$\rho_{i-1} r_{i-1}^d$, so that by Lemma \ref{lem.sum.Green}, 
for some constant $C>0$,  
\begin{equation*}
\Sigma_2 \le 
|\hat \K_1|\sup_z \sum_{i\ge 2}\sum_{k\in \hat \K_i} \frac{1}{T} G_T(S_k-z)\1\{S_k\notin Q(z, r_{i-1})\} 
\le C |\hat \K_1|\sum_{i\ge 2} \rho_{i-1}
\le 2C  |\hat \K_1|\le \zeta/5, 
\end{equation*}
choosing $\delta$ small enough for the last inequality. 
We consider now $k\in \hat \K_i$ for $i\ge 2$, and note that
by definition of $\hat \K_i$,  we have a bound
on the time spent in concentric shells centered around $S_k$, inside $Q(S_k,r_{i-1})$.
Thus, bounding $\sum_i |\hat \K_i|$ by $n+1$, we obtain
\begin{equation*}
\begin{split}
\Sigma_3 & \stackrel{\eqref{Green}}{\le}  C\sum_{i\ge 2}|\hat \K_i| 
\sum_{j=1}^{i-1} \frac{\rho_j r_j^d}{Tr_j^{d-2}} 
\stackrel{\eqref{def.ri}}{\le} C\sum_{i\ge 2}|\hat \K_i| \sum_{j=1}^{i-1}
 \frac{\log n}{Tr_j^{d-4}}\le C \frac{ n \log n}{Tr_1^{d-4}} \stackrel{\eqref{def.ri}}{\le} C\frac{n (\log n)^{\frac{2}{d-2}}}{T}.
\end{split}
\end{equation*}
Using next the hypothesis on $\zeta$ with the constant $C_0$ large enough, we get 
\begin{equation*}
\Sigma_3 \le  \zeta/5. 
\end{equation*}
The same argument gives as well for $n$ large enough, 
$$\Sigma_5 \le \frac{Cn}{T} \left\{r_1^2+ \sum_{j\ge 1} \frac{\rho_j r_j^d}{r_j^{d-2}} \right\} \le C\frac{n(\log n)^{\frac{2}{d-2}}}{T} \le \zeta/5,$$
bounding simply $|\hat \K_\infty|$ by $n+1$, and using for the first inequality that the sum of the Green's function over all points in $Q(0,r_1)$ is of order $r_1^2$, by \eqref{Green}.   
Finally, for any $2\le i \le j$, when  $k'\in \hat \K_j$, 
the time spent on cubes at scale $r_{i-1}$ around such $S_{k'}$, is smaller than $\rho_{i-1}$.
Thus for a constant $C>0$, using again Lemma \ref{lem.sum.Green}, 
\begin{equation*}
\Sigma_4 \le C \sum_{i\ge 2} |\hat \K_i| \rho_{i-1}, 
\end{equation*}
so that on the event $\cE(A,\delta, I)$, 
$$\Sigma_4 \le C\zeta \left(\delta \sum_{1\le i\le I} \rho_{i-1}^{\frac{d-4}{d-2}} + A \sum_{i>I} \rho_{i-1}^{\frac{d-4}{d-2}}\right) \le \frac{\zeta}{10},$$ 
by taking first $I$ large enough, and then $\delta$ small enough. Altogether, 
this proves the proposition. 
\end{proof}


\subsection{Dimension Three}
We assume here that $d=3$, 
and for the same reason than in higher dimension, 
we choose $T$ as 
$$T:=\lceil \frac{c}{2\underline \kappa}\cdot (\zeta n)^{1/3}\rceil,$$
with $\underline \kappa$ as in \eqref{theo.d3}, and $c$ as in Corollary \ref{cor-trans}. 
Then, we recall that $\rho_i=2^{-i+1}$, for $i\ge 1$, 
and define $r_i$ similarly as in higher dimension (with $C_0$ 
as in \eqref{hyp.r}), by $r_i\rho_i =C_0 \cdot\log n$.
We define $I$ to be the smallest integer such that $\rho_I\le c_0\zeta/n$, with a constant $c_0$ that will be fixed in the proof of Proposition \ref{prop-d3}. 
Note that $\zeta/n$ is the correct order of the density of the range we are expecting
under the event $\{|\cR_n|-\bE[|\RR_n|]<-\zeta\}$.
Note also that when $\zeta\ge n^{5/7}\cdot \log n$, then $r_I\le C\sqrt T$, for some constant $C>0$. 
Now we keep the same definition for $\hat \K_i$ as in higher dimension, but only for $1\le i<I$, and we set
\[
\hat \K_I:=\{0,\dots,n\}\bs \bigcup_{j<I} \K_n(r_j,\rho_j).
\]
For $A>0$, $\delta>0$, and $1\le J\le I-1$, an integer, we set 
$$
\cE(A,\delta,J):= \left(\bigcap_{I-J \le i\le I-1} 
\{|\hat \K_i|\le \delta L_i\} \right) \cap \left(\bigcap_{i<I-J}  \{|\hat \K_i|\le A L_i\}\right)
,\quad\text{with}\quad  
L_i:= \frac{\zeta^2}{n\rho_i^2}, \ \forall i\ge 1.
$$
Our main result in this subsection reads as follows. 
\begin{proposition}\label{prop-d3} 
For any $A>0$, there exist $\delta>0$, and an integer $1\le J\le I-1$, such that for any $n\ge 2$, and any $n^{5/7} \cdot \log n \le \zeta\le n$, 
\begin{equation*}
\cE(A,\delta,J) \ \subseteq \ \left\{ \sum_{k=0}^n \sum_{x\in \cR_k} \frac{1}{T} G_T(x-S_k) \le \zeta\right\}. 
\end{equation*}
\end{proposition}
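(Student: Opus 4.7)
The plan is to mirror the proof of Proposition \ref{prop.up.5}, with $\hat\K_I$ playing the role of $\hat\K_\infty$, and with the $d=3$ length scales $r_i=C_0\log n/\rho_i$ interacting with $T\asymp(\zeta n)^{1/3}$. I would write $\{0,\dots,n\}=\bigcup_{i=1}^I\hat\K_i$ as a disjoint union and decompose the corrector exactly as in \eqref{decomp-xi}--\eqref{def-Sigma}, with the outer sums capped at $I$ and with $\Sigma_5$ now gathering all contributions from $k\in\hat\K_I$ (further split into a near part $x\in Q(S_k,r_{I-1})$ and a far part $x\notin Q(S_k,r_{I-1})$). It then suffices to show that each of $\Sigma_1,\Sigma_2,\Sigma_3,\Sigma_4,\Sigma_5$ is at most $\zeta/10$ on the event $\cE(A,\delta,J)$.

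The pieces $\Sigma_1,\Sigma_2,\Sigma_4$ are controlled as in $d\ge 5$. Using $\sum_z G_T(z)/T=1$ gives $\Sigma_1\le |\hat\K_1|\le A\zeta^2/n$, which is $\le \zeta/10$ provided $\zeta\le \epsilon n$. For $\Sigma_2$ and $\Sigma_4$, Lemma~\ref{lem.sum.Green} applied to each $\hat\K_j$ at scale $r_{j-1}$ reduces everything to $C\sum_i|\hat\K_i|\rho_{i-1}$. On $\cE(A,\delta,J)$, and using $L_i\rho_{i-1}\asymp \zeta^2/(n\rho_i)$ (which is geometrically increasing in $i$), the sum over the high-$i$ window $I-J\le i\le I-1$ is dominated by its last term and contributes at most $C'\delta\zeta/c_0$, while the sum over $i<I-J$ is dominated by $i=I-J-1$ and contributes at most $C'A\zeta/(c_0 2^J)$. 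Choosing $J=J(A)$ large enough and then $\delta=\delta(J)$ small enough makes both $\le \zeta/10$; this is the reason the roles of $A$ and $\delta$ are reversed compared to the $d\ge 5$ statement.

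The delicate pieces are $\Sigma_3$ and the near part of $\Sigma_5$. For both, I would apply the shell decomposition used in the $d\ge 5$ proof together with the crude bound $\sum_i|\hat\K_i|\le n+1$, which does not invoke $\cE$. In $d\ge 5$ the inner shell sum $\sum_j(\log n)/(T r_j^{d-4})$ was dominated by its smallest scale $r_1$, whereas in $d=3$ it reads $\sum_j(\log n)^2/(T\rho_j)$ and is dominated by its largest index. Since the density of each $k\in\hat\K_i$ (respectively $k\in\hat\K_I$) at scale $r_{i-1}$ (respectively $r_{I-1}$) is at most $\rho_{i-1}$ (respectively $\rho_{I-1}$), this produces a bound of order $n(\log n)^2/(T\rho_{I-1})$ in each case. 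Using $\rho_{I-1}\ge c_0\zeta/n$ from the definition of $I$ and $T\asymp(\zeta n)^{1/3}$, this is of order $n^{5/3}(\log n)^2/(c_0\zeta^{4/3})$, which is $\le \zeta/10$ precisely when $\zeta\ge C_* n^{5/7}(\log n)^{6/7}$, matching the hypothesis $\zeta\ge n^{5/7}\log n$. For the far part of $\Sigma_5$, I would exchange the sums over $k$ and $x$ and apply Lemma \ref{lem.sum.Green} to $\hat\K_I$ at scale $r_{I-1}$ with density $\rho_{I-1}$, obtaining $C\rho_{I-1}|\cR_n|\le 4Cc_0\zeta\le \zeta/10$ by fixing $c_0$ small at the outset, independently of $A,\delta,J$.

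The main obstacle is the tight bookkeeping in $\Sigma_3$ and near-$\Sigma_5$: in $d=3$ the shell sum is no longer geometrically summable and concentrates at the largest available scale $r_{I-1}\asymp n\log n/\zeta$. This single estimate pins down both the time scale $T\asymp(\zeta n)^{1/3}$ and the threshold $\zeta\ge n^{5/7}\log n$, and is the reason, as explained in Remark \ref{rem.5/7}, that the present technique cannot cover the whole non-Gaussian regime in dimension three.
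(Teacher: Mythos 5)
Your proposal follows the paper's proof closely: the same decomposition \eqref{decomp-xi}--\eqref{def-Sigma} with outer sums capped at $I$, the same use of the definition of $r_i$ to reduce the shell sums, and the same event $\cE(A,\delta,J)$ to control the cross-terms. The bookkeeping differs slightly --- you keep a separate $\Sigma_5$ for $k\in\hat\K_I$, while the paper just lets $\Sigma_3,\Sigma_4$ run up to $i=I$ and treats $\hat\K_I$ as the last layer --- but these are equivalent. Your observation that in $d=3$ the shell sum $\sum_{\ell<i}\rho_\ell r_\ell^2/T$ is dominated by the largest scale $r_{i-1}$, and that pushing this to $i=I$ together with $\rho_{I-1}\ge c_0\zeta/n$ and $T\asymp(\zeta n)^{1/3}$ forces $\zeta\gtrsim n^{5/7}(\log n)^{6/7}$, is exactly the mechanism behind the paper's hypothesis and Remark~\ref{rem.5/7}. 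One mild variation: for $\Sigma_3$ (and your near-$\Sigma_5$) you use the unconditional bound $\sum_i|\hat\K_i|\le n+1$ together with the fact that $\rho_{i-1}r_{i-1}^2/T\le C\rho_{i-1}$ (via $r_{i-1}^2\le CT$) to get $\le C(n+1)\rho_{I-1}\le C'c_0\zeta$; the paper instead rewrites the per-term factor as $C\rho_i$ and also invokes the event for $i<I$. Both are correct; your route is in fact somewhat cleaner because the bound on $\Sigma_3$ ends up not depending on $\cE$ at all.

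There is one genuine (if small) gap. For $\Sigma_1$ you write $|\hat\K_1|\le A L_1=A\zeta^2/n\le\zeta/10$ ``provided $\zeta\le\epsilon n$'', but the proposition allows $\zeta$ all the way up to $n$, so this leaves the window $\epsilon n<\zeta\le n$ uncovered. The paper closes it by noting that when $\zeta>n/(4A)$ one may take $J=I-1$, which puts the index $i=1$ into the $\delta$-window of $\cE(A,\delta,J)$, giving $|\hat\K_1|\le\delta L_1\le\delta\zeta$, which is $\le\zeta/4$ once $\delta<1/4$. You should incorporate this case distinction (or equivalently restate the proposition with $\zeta\le\epsilon n$, which is in any event all that is used in the proof of \eqref{theo.d3}).
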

We note that exactly as in higher dimension, one can deduce from this proposition 
the upper bound in \eqref{theo.d3}, as well as Theorem \ref{theo-path}, for the case $d=3$.  

\begin{proof}
We proceed as in higher dimension, and write 
$$\sum_{k=0}^n \sum_{x\in \cR_k} \frac{1}{T} G_T(x-S_k)
\le \Sigma_1 + \Sigma_2 + \Sigma_3 + 2\Sigma_4,$$
with the $(\Sigma_k)_{1\le k\le 4}$ as in \reff{def-Sigma} except that we take only a sum over
$i$ running up to $I$.
Now we assume that $\delta<1/4$, and if $\zeta> n/(4A)$, we set $J=I-1$. Thus in all cases, one has $|\hat \K_1| \le  \zeta/4$, on the event $\cE(A,\delta,J)$. It follows that on this event 
\begin{equation}\label{decomp-8}
\Sigma_1 \le
|\hat \K_1| \le  \zeta/4.
\end{equation}
The term $\Sigma_2$ is treated as in higher dimension. Concerning $\Sigma_3$, we note that for any $k\in \hat \K_i$, with $i\ge 2$, one has 
\begin{equation*}
\sum_{x\in \cR_k}
\frac{1}{T} G_T(S_k-x)\1\{x\in Q(S_k, r_{i-1}) \}\le  
C \sum_{\ell=1}^{i-1} \frac{\rho_\ell r_\ell^2}{T} 
\le C\rho_i,
\end{equation*}
using that $r_\ell^2\le CT$, for all $\ell$.   
Therefore, 
\begin{equation*}
\Sigma_3 \le C\sum_{i=1}^I |\hat \K_i| \rho_{i}.
\end{equation*} 
Bounding simply $|\hat \K_I|$ by $n+1$, we get that on the event $\cE(A,\delta,J)$, with the constant $c_0$ from the definition of $\rho_I$, 
\begin{equation*}
\sum_{i=1}^I |\hat \K_i| \rho_i 
\le \zeta\left(A\sum_{i=1}^{I-J} \frac{\rho_I}{\rho_i} 
+ \delta \sum_{I-J< i< I}  \frac{\rho_I}{\rho_i} + c_0\right)  
\le \zeta(A 2^{-J+1} + 2\delta + c_0).
\end{equation*}
Thus by taking $J$ large enough, and then $\delta$ and $c_0$ small enough, 
we obtain $\Sigma_3\le \zeta/8$ .
An argument as in the proof of Proposition \ref{prop.up.5} 
shows that one has as well
$$
\Sigma_4 \le C\sum_{i=1}^I |\hat \K_i| \rho_{i}\le \zeta/8. 
$$ 
This concludes the proof of the proposition. 
\end{proof}


\section{Lower bounds in \eqref{bounds-d5} and \eqref{theo.d3}}\label{sec-LB}
We prove here the lower bounds in \ref{bounds-d5} and \eqref{theo.d3}. 
We start with the case of dimension $5$ and more which is the easiest. 

\subsection{Dimensions Five and larger}
In this case we obtain the following. Note that the result covers a slightly larger range of values for $\zeta$ than in the statement of Theorem \ref{theo.d5} (the logarithmic factor can be removed).   

\begin{proposition}\label{prop.lower5}
Assume $d\ge 5$. There exist positive constants $\varepsilon$, $K$ and $\underline{\kappa}$, 
such that for any $n\ge 2$ and any $K n^{\frac{d}{d+2}}  \le \zeta \le \varepsilon n$, one has 
$$
\bP\left(|\RR_n| - \bE[|\RR_n|]\le -\zeta\right) 
\ \ge\  \exp\left(- \underline{\kappa} \cdot \zeta^{1-\frac 2d} \right).$$
\end{proposition}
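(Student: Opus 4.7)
The plan is to exhibit an explicit scenario that pays the advertised cost and loses $\zeta$ from the range. Motivated by the pathwise description in Theorem~\ref{theo-path}, the natural choice is to \emph{confine} the walk during an initial time window of length $T \sim \zeta$ inside a cube of side $r \sim \zeta^{1/d}$. Inside such a cube the range is at most $r^d \sim \zeta$, while during the same time a free walk typically contributes $\gamma_d T \sim \zeta$ new sites, so with $c_2$ (the prefactor in $T$) large compared to $c_1^d$ (the prefactor in $r^d$), the loss is of order $\zeta$. The confinement probability is of order $\exp(-c T/r^2) = \exp(-c' \zeta^{1-2/d})$, matching the target.

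More concretely, fix constants $c_1$ small and $c_2$ large (to be chosen later), and set $r := \lceil c_1 \zeta^{1/d}\rceil$ and $T := \lceil c_2\zeta\rceil$. The upper bound $\zeta\le \varepsilon n$, with $\varepsilon$ small, ensures $T\le n/2$. Let
\[
A := \{S_k \in Q(0,r) \text{ for all } 0\le k\le T\}.
\]
By standard spectral estimates for the simple random walk killed on exiting $Q(0,r)$ (whose smallest Dirichlet eigenvalue is of order $1/r^2$, and the principal eigenfunction is bounded below by a positive constant on $Q(0,r/2)$), there exists $c_3>0$ depending only on $d$ such that
\[
\bP(A) \ \ge\ \exp\!\left(-c_3\, T/r^2\right) \ \ge\ \exp\!\left(-c_3\, c_2\, c_1^{-2}\, \zeta^{1-2/d}\right).
\]

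From \eqref{king-2} we have $|\RR_n| \le |\RR_T| + |\RR[T,n]|$, and on $A$, $|\RR_T|\le |Q(0,r)|\le (c_1^d+o(1))\zeta$. Moreover, translation invariance gives $\bE[|\RR[T,n]|] = \bE[|\RR_{n-T}|]$, and in $d\ge 5$ the classical asymptotic $\bE[|\RR_m|] = \gamma_d m + O(1)$ (a consequence of \eqref{esperance.intersection}) yields $\bE[|\RR_n|]-\bE[|\RR[T,n]|] = \gamma_d T + O(1)$. Combining, on $A$,
\[
|\RR_n| - \bE[|\RR_n|] \ \le\ (c_1^d+o(1))\zeta + \bigl(|\RR[T,n]|-\bE[|\RR[T,n]|]\bigr) - \gamma_d T + O(1).
\]
Choosing $c_2$ so that $\gamma_d c_2 \ge 2 + c_1^d$, we get the inclusion
\[
A \cap \bigl\{|\RR[T,n]| - \bE[|\RR[T,n]|] \le \zeta\bigr\}\ \subseteq\ \bigl\{|\RR_n|-\bE[|\RR_n|]\le -\zeta\bigr\},
\]
for $n$ large enough.

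By the Markov property at time $T$, the second event above is independent of $A$ and, by translation invariance, has the same probability as $\{|\RR_{n-T}|-\bE[|\RR_{n-T}|]\le \zeta\}$. Since $\zeta \ge K n^{d/(d+2)} \gg \sqrt{n}$ when $d\ge 5$ (because $d/(d+2) > 1/2$), and since by Lemma~\ref{lem.variance} we have $\var(|\RR_{n-T}|) = O(n)$, Chebyshev's inequality gives that this event has probability at least $1/2$ for $n$ large. Hence
\[
\bP\bigl(|\RR_n|-\bE[|\RR_n|]\le -\zeta\bigr) \ \ge\ \tfrac12\, \bP(A) \ \ge\ \tfrac12\exp\!\left(-c_3\, c_2\, c_1^{-2}\, \zeta^{1-2/d}\right),
\]
which is the claimed bound with $\underline\kappa := 2\, c_3\, c_2\, c_1^{-2}$ (absorbing the factor $1/2$).

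The one step that is not a purely formal manipulation is the quantitative lower bound $\bP(A)\ge \exp(-c_3 T/r^2)$ in the regime $T\gg r^2$, which is the main obstacle. This is classical — it follows either from a Perron–Frobenius argument on the killed walk, or by iterating the one-step lower bound $\bP_x(S_1 = y) \ge (2d)^{-1}$ over a well-chosen skeleton — but the constants must be uniform in $r$ and $T$, and the walk must be shown to have probability bounded away from $0$ of returning deep inside the cube on every time scale of order $r^2$. Everything else (the expectation asymptotics, the Markov step, and the Chebyshev bound for the free piece) is routine.
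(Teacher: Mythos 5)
Your proof is correct and follows essentially the same strategy as the paper's: confine an initial segment of length $\Theta(\zeta)$ in a cube of volume $\Theta(\zeta)$, use independence of the remaining piece, and show the remaining centered range has a good chance of not deviating too much. The only cosmetic difference is that you control the free piece with Chebyshev (exploiting $\zeta \gg \sqrt n$) instead of invoking the CLT of Jain–Orey as the paper does; both work.
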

\begin{proof}
The argument is the same as in \cite{AS}, but for convenience let us briefly recall it. The idea is to force one piece of the walk to localize in a small cube of volume $\zeta$. 
More precisely, set $m:=\lfloor \frac{3}{\gamma_d}\zeta\rfloor$. Note first that by \eqref{king-2} and \eqref{esperance.intersection}, 
one has  
$$
|\RR_n| -\bE[|\RR_n|] \le |\RR_m| - \bE[|\RR_m|] + Y,
\quad\text{with}\quad Y:=|\RR[m,n]| - \bE[|\RR[m,n]|] + \cO(1).
$$ 
Note also that $Y$ is independent of $\RR_m$ and that
on the event $\cE:=\{\RR_m \subseteq Q(0,\zeta^{1/d})\}$, 
one has $|\RR_m|\le |Q(0,\zeta^{1/d})| = \zeta$. 
Furthermore, we recall that
$$
\left| \bE[|\RR_m|] - \gamma_d m\right| =\cO(\sqrt m).
$$ 
Thus on the event $\cE \cap \{Y\le 0\}$, 
at least for $\zeta$ large enough, 
$$
|\RR_n| -\bE[|\RR_n|] \le -\zeta.
$$
Finally, recall that on one hand, for some constant $\underline \kappa$, 
$$
\bP(\cE) \ge \exp(-\underline \kappa\cdot  \zeta^{1-2/d}).$$
On the other hand, choosing $\varepsilon$ such that $n-m\ge n/2$, and then using the Central Limit Theorem for the volume of the range \cite{JP,JO}, we deduce that at least for $n$ large enough, 
$$\bP(Y\le 0) \ge 1/4.$$
Since $Y$ and $\cE$ are independent, this concludes the proof of the proposition. 
\end{proof}

\subsection{Dimension three}
Our result covers here as well a larger range of 
values for $\zeta$ than in  \eqref{theo.d3} (recall however, that in dimension $3$ a more general and more precise asymptotic is proved in \cite[Theorem 8.5.2]{Chen}).  
\begin{proposition}\label{prop.lower3}
Assume $d=3$. 
There exist positive constants $\varepsilon$, $\underline{\kappa}$, and $K$, such that for any $n\ge 2$, and any $K n^{2/3} \le \zeta \le \epsilon n$, 
one has 
\begin{eqnarray*}
\bP\left(|\RR_n|-\bE[|\RR_n|] \le - \zeta\right)  \ge \exp(- \underline{\kappa} \cdot (\frac{\zeta^2}n)^{1/3}). 
\end{eqnarray*}
\end{proposition}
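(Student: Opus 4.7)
The plan is to use a confinement strategy analogous to that of Proposition~\ref{prop.lower5}, but adapted to the smaller target rate $(\zeta^2/n)^{1/3}$. Whereas in $d\ge 5$ one confines the walk for a short time $m\asymp\zeta$ in a cube of volume $\zeta$, for $d=3$ I would instead force the walk to stay in a much larger ball, of volume $\asymp n^2/\zeta$, for the entire time interval $[0,n]$. This is natural in view of the folding picture of Theorem \ref{theo-path}: in $d=3$ the typical density is $\zeta/n$, so the typical confinement volume should be $n/(\zeta/n) = n^2/\zeta$.

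More concretely, for a small constant $c_0>0$ to be chosen, I would set $R:=\lfloor c_0(n^2/\zeta)^{1/3}\rfloor$, let $B:=\{x\in\Z^3:\|x\|_\infty\le R\}$ (so $|B|\asymp n^2/\zeta$), and consider the event $E:=\{S_k\in B \text{ for all } 0\le k\le n\}$. A standard estimate based on the first Dirichlet eigenvalue of the discrete Laplacian on $B$, which is of order $1/R^2$, gives $\bP(E)\ge c\exp(-c_1 n/R^2)=c\exp(-c_2(\zeta^2/n)^{1/3})$ for positive constants $c,c_1,c_2$ depending on $c_0$.

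The heart of the argument is to show that on $E$ the range is typically reduced by at least $\zeta$ from its unconditional mean. Heuristically, the confined walk has local density $\sim n/|B|\sim\zeta/n$, and a Swiss-cheese type calculation predicts $\bE[|\RR_n|\mid E]\approx |B|(1-e^{-\gamma_3 n/|B|})\approx \gamma_3 n-\gamma_3^2\zeta/(2c_0^3)$, yielding a reduction of order $\zeta/c_0^3$ which exceeds $4\zeta$ provided $c_0$ is small. To make this rigorous, I would split $[0,n]$ into successive blocks of length $T\asymp R^2$ (the mixing time of the walk in $B$), use the Markov property together with the near-uniformity of the positions at block boundaries to bound the expected pairwise intersection of the ranges of two distinct blocks by $\asymp(\gamma_3 T)^2/|B|$, and then sum over the $\binom{n/T}{2}$ pairs to obtain a total expected overlap $\asymp n^2/|B|\asymp\zeta/c_0^3$, which is the required range reduction.

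For the concentration step, an analogous block-by-block variance calculation gives $\vr{|\RR_n|\mid E}=O(n^2/R^2)=O(n^{2/3}\zeta^{2/3})$, which is $o(\zeta^2)$ in the regime $\zeta\ge Kn^{2/3}$ for $K$ large. Chebyshev's inequality then yields $\bP(|\RR_n|\le\bE[|\RR_n|\mid E]+3\zeta\mid E)\ge 1/2$, and combined with the expectation bound and $|\bE[|\RR_n|]-\gamma_3 n|=O(\sqrt n)\ll\zeta$, this gives the desired lower bound. The main technical obstacle will be the rigorous control of the pairwise block intersections under the conditioning on $E$, including the handling of the boundary layer near $\partial B$ where the $h$-transform associated to the first Dirichlet eigenfunction differs significantly from the unconditioned walk; both should however contribute only lower-order corrections to the leading Swiss-cheese term.
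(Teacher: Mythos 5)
Your strategy (confine the \emph{entire} walk in a ball $B$ of volume $\asymp n^2/\zeta$ for the whole time interval, then show that on this event $E$ the range is typically reduced by $\ge\zeta$ through a conditional mean/variance computation plus Chebyshev) is a genuinely different route from the paper's. The paper confines only the \emph{first half} of the walk inside $Q((n^2/\zeta)^{1/3})$, leaves the second half free, and then invokes Proposition 4.1 of \cite{AS}, which says that a free walk of length $n/2$ hits a fixed set $\Lambda\subset Q((n^2/\zeta)^{1/3})$ of size $\asymp n$ in at least $\zeta$ points with probability $\ge\exp(-\kappa'(\zeta^2/n)^{1/3})$. Feeding $\Lambda=\RR_m$ into \eqref{king-2} then produces the desired deficit $-\zeta$ directly, with only upward moderate-deviation bounds needed to control $|\RR_m|$ and $|\RR[m,n]|$; nothing about the conditioned walk's moments is needed. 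Your approach is more in the self-contained Donsker--Varadhan spirit, while the paper's reuses an off-the-shelf intersection estimate and thereby avoids the two hardest steps in your plan.

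Those two steps are where the gaps sit, and you partly acknowledge them, but let me be concrete. First, the statement ``sum over the $\binom{n/T}{2}$ pairs to obtain a total expected overlap'' is not a valid truncation of inclusion--exclusion: multiply-visited sites make the all-pairs sum an over-subtraction, so it bounds $\bE[|\RR_n|\mid E]$ from below, not from above, which is the opposite of what you need. To get an honest upper bound you must use the \emph{exact} dyadic identity \eqref{UBd5-1}, in which the subtracted terms are intersections of \emph{consecutive} dyadic pieces at each scale; the dominant contribution then comes from the coarsest scale $\ell=1$, i.e.\ from $|\cR_1^1\cap\cR_2^1|=|\RR_m\cap\RR[m,n]|$ with $m=\lfloor n/2\rfloor$, which is precisely the object the paper controls directly. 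Second, the conditional variance bound $\vr{|\RR_n|\mid E}=O(n^2/R^2)$ is asserted rather than derived; under the $h$-transform by the Dirichlet eigenfunction the increments are no longer exchangeable and the boundary layer contributes, so one cannot simply import the free-walk variance estimate of Lemma~\ref{lem.variance}. Neither gap looks fatal, but both would require substantial new work, and the $\ell=1$ observation essentially collapses your program onto the paper's: once you see that the leading overlap term is the first/second-half intersection, confining only the first half and letting the second half run free (so that Proposition 4.1 of \cite{AS} applies verbatim) is the shorter path.
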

\begin{proof} 
One can use also the same argument as in \cite{AS}. Since the proof is quite long, let us only give the main steps, and simply refer to \cite{AS} for details.

Set $m:=\lfloor n/2\rfloor$, and note that  \eqref{king-2} and \eqref{esperance.intersection} now give, 
$$|\RR_n| -\bE[|\RR_n|] \le |\RR_m| - \bE[|\RR_m|] + |\RR[m,n]| - \bE[|\RR[m,n]|] -|\RR_m \cap \RR[m,n]| + \cO(\sqrt n).$$ 
We consider next the event $\cE:=\{\RR_m\subseteq Q((n^2/\zeta)^{1/3})\}$, whose probability is of the right order, since for some constant $\underline \kappa$,  
$$\bP(\cE)\ge \exp(-\underline \kappa \cdot (\zeta^2/n)^{1/3}).$$ 
Moreover, by the results of \cite{AS}, one has for some constant $c>0$, 
$$\bP(|\RR_m|\le \frac{\gamma_d}{2} m ) \le \exp(-cn^{1/3}),$$
which is thus negligible when compared to the probability of $\cE$, if $\zeta\le \epsilon n$, with $\epsilon$ small enough. 
Finally we use the Proposition 4.1 in \cite{AS}, which implies that for any fixed $\Lambda$ in $Q((n^2/\zeta)^{1/3})$, with size of order $n$, the probability that $\RR[m,n]$ 
intersects $\Lambda$ in more than $\zeta$ points, is at least $\exp(-\kappa' \cdot (\zeta^2/n)^{1/3})$, provided $\zeta^3/n^2$ is large enough. 
Applying the latter with $\Lambda = \RR_m$, on the event $\cE\cap \{|\RR_m|\ge \frac{\gamma_d}{2} m\}$, and using known estimates for upward deviations (see for instance \cite[Theorem 8.5.2]{Chen}), concludes the proof of the proposition. 
\end{proof}

\begin{remark}
\emph{We note that rough upper bounds (sufficient for the proof above) for upward moderate deviations of the range can 
also be obtained using soft arguments, so we do not really need to rely on any result of \cite{Chen}. More details are given in \cite{AS19b} (which concerns the capacity of the range, instead of the volume, but the argument works the same in both cases).  }
\end{remark}


\section{The Gaussian Regime}\label{sec-gauss}
We deal here with the Gaussian regime and prove Theorem \ref{theo-gauss}. 
We start with recalling some preliminary estimates from the literature 
in the next subsection. Then we prove 
Theorem \ref{theo-gauss} in Subsection \ref{sec-gaussMDP}. 

\subsection{Preliminary results}

We first state an instance 
of G\"artner-Ellis' Theorem (see Theorem 2.3.6 in \cite{DZ}). 
\begin{theorem}[G\"artner-Ellis]\label{theo-GE}
Let $\{X_n\}_{n\ge 0}$ be a sequence of real random variables.
Assume that for a sequence $\{b_n\}_{n\ge 0}$ going to infinity, and
for any $\theta\in \R$, 
\begin{equation*}
\lim_{n\to\infty} \frac{1}{b_n} \log \bE[\exp(\theta b_n\cdot X_n)]=
\frac{\sigma^2}{2}\cdot \theta^2. 
\end{equation*}
Then, for any $\lambda>0$, 
\begin{equation*}
\lim_{n\to\infty} \frac{1}{b_n}
\log \bP(X_n> \lambda)=-\frac{\lambda^2}{2\sigma^2}.
\end{equation*}
\end{theorem}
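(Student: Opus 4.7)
The plan is to prove matching upper and lower bounds separately. The upper bound is a direct Chebyshev argument and the limiting log-MGF hypothesis handles it immediately; the lower bound is the substantial step and requires an exponential change of measure (tilting) at a carefully chosen parameter, followed by a soft weak-law argument under the tilted law. Throughout, I will use the Legendre transform of $\Lambda(\theta):=\sigma^2\theta^2/2$, which is $\Lambda^*(\lambda)=\sup_{\theta\in\mathbb R}(\theta\lambda-\sigma^2\theta^2/2)=\lambda^2/(2\sigma^2)$, achieved at $\theta^*=\lambda/\sigma^2$.

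For the upper bound, I would apply the exponential Markov inequality: for any $\theta>0$,
\[
\mathbb P(X_n>\lambda)\le e^{-\theta b_n\lambda}\,\mathbb E[\exp(\theta b_n X_n)].
\]
Taking $\tfrac{1}{b_n}\log$ of both sides, letting $n\to\infty$, and using the hypothesis, I obtain $\limsup \tfrac{1}{b_n}\log\mathbb P(X_n>\lambda)\le -\theta\lambda+\sigma^2\theta^2/2$. Optimizing over $\theta>0$ yields the bound $-\lambda^2/(2\sigma^2)$.

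For the lower bound, I fix a small $\delta>0$ and tilt with $\theta:=(\lambda+\delta)/\sigma^2$. Define the tilted probability measures $\widetilde{\mathbb P}_n$ by
\[
\frac{d\widetilde{\mathbb P}_n}{d\mathbb P}:=\frac{\exp(\theta b_n X_n)}{\mathbb E[\exp(\theta b_n X_n)]}.
\]
Restricting to the event $\{\lambda<X_n<\lambda+2\delta\}$ and inverting the change of measure gives the estimate
\[
\mathbb P(X_n>\lambda)\;\ge\;\exp\bigl(-\theta b_n(\lambda+2\delta)\bigr)\,\mathbb E[\exp(\theta b_n X_n)]\,\widetilde{\mathbb P}_n(\lambda<X_n<\lambda+2\delta).
\]
Taking $\tfrac1{b_n}\log$ and invoking the hypothesis, it remains to show that $\tfrac1{b_n}\log\widetilde{\mathbb P}_n(\lambda<X_n<\lambda+2\delta)\to 0$. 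For this I would observe that the tilted log-MGF satisfies, for every $\eta\in\mathbb R$,
\[
\frac{1}{b_n}\log\widetilde{\mathbb E}_n[\exp(\eta b_n X_n)]=\frac{1}{b_n}\bigl(\Lambda_n(\eta+\theta)-\Lambda_n(\theta)\bigr)\ \xrightarrow[n\to\infty]{}\ \sigma^2\theta\,\eta+\tfrac{\sigma^2}{2}\eta^2,
\]
so under $\widetilde{\mathbb P}_n$, $X_n$ behaves like a random variable with asymptotic mean $\sigma^2\theta=\lambda+\delta$. Applying the already-established upper bound to $-X_n$ under $\widetilde{\mathbb P}_n$, i.e.\ to the events $\{X_n\le \lambda\}$ and $\{X_n\ge\lambda+2\delta\}$ (both at distance $\delta$ from the tilted mean), shows each of these probabilities decays at least as $\exp(-c b_n)$ with $c>0$, hence their complement has probability tending to $1$. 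Plugging back, the lower bound reads $-\theta(\lambda+2\delta)+\sigma^2\theta^2/2$; letting $\delta\downarrow 0$ yields exactly $-\lambda^2/(2\sigma^2)$.

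The main obstacle is the soft step of showing the tilted probability $\widetilde{\mathbb P}_n(\lambda<X_n<\lambda+2\delta)$ does not decay exponentially. The cleanest route is bootstrapping the upper bound already proved: because the limiting tilted log-MGF is again quadratic (just re-centered), the very same Chebyshev argument applies verbatim to the tilted laws, giving an effective weak law of large numbers that concentrates $X_n$ near $\lambda+\delta$ under $\widetilde{\mathbb P}_n$. This trick replaces any need for a CLT under the tilted measure and is the standard reason the Gärtner--Ellis argument works whenever the limit log-MGF is finite and differentiable at the relevant point, which here is immediate from the quadratic form.
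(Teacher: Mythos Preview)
Your proof is correct and follows the standard Gärtner--Ellis argument: the upper bound via the exponential Markov inequality, and the lower bound via an exponential change of measure at $\theta=(\lambda+\delta)/\sigma^2$, followed by a weak-law type estimate under the tilted law obtained by reapplying the upper bound. The computations check out; in particular, under $\widetilde{\mathbb P}_n$ the limiting log-MGF is $\eta\mapsto \sigma^2\theta\eta+\tfrac{\sigma^2}{2}\eta^2$, so the Chebyshev bound gives $\limsup\tfrac{1}{b_n}\log\widetilde{\mathbb P}_n(X_n\le\lambda)\le -\delta^2/(2\sigma^2)<0$, and similarly on the other side.

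Regarding comparison with the paper: the paper does not prove this statement at all. Theorem~\ref{theo-GE} is quoted as ``an instance of G\"artner--Ellis' Theorem (see Theorem 2.3.6 in \cite{DZ})'' and used as a black box in the proof of Theorem~\ref{theo-gauss}. So there is no proof in the paper to compare against; you have supplied a self-contained argument where the authors simply cite the literature. Your approach is exactly the textbook proof (and is in fact how Dembo--Zeitouni prove the general version), specialized to the easy case where the limiting log-MGF is quadratic and hence smooth and steep everywhere, which is why no additional exposedness or essential-smoothness verifications are needed.
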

The second result we shall need concerns large deviations for random variables with stretched exponential tail.  
\begin{theorem}[A. Nagaev \cite{anagaev}]\label{theo-nagaev}
Let $\{Y_n\}_{n\ge 0}$ be a sequence of centered random variables, such that 
$\bE[\exp(\kappa |Y_1|^\alpha)]<\infty$, for some constants $\kappa>0$, and $\alpha\in (0,1]$. Then 
there are positive constants $c$ and $C$, such that for any $n\ge 1$ and any $t>n^{\frac 1{2-\alpha}}$, 
\begin{equation*}
\bP\big(Y_1+\dots+Y_n>t\big)\le C\exp(-c t^\alpha). 
\end{equation*}
\end{theorem}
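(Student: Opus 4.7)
The plan is to prove this via the classical truncation-plus-exponential-Chebyshev method of A. Nagaev. I will assume the $Y_i$ are i.i.d. (the standard setting for such inequalities). Fix a small $\eta>0$ to be tuned, set $T:=\eta t$, and decompose
\[
\bP(S_n>t)\;\le\;\bP\bigl(\max_{i\le n}|Y_i|>T\bigr)\;+\;\bP\Bigl(\sum_{i=1}^n Y_i\,\ind\{|Y_i|\le T\}>t\Bigr).
\]
For the first term, a union bound together with Markov's inequality applied to $e^{\kappa|Y_1|^\alpha}$ yields $\bP(\max_i|Y_i|>T)\le Cn\exp(-\kappa\eta^\alpha t^\alpha)$. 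Under the hypothesis $t>n^{1/(2-\alpha)}$, one has $t^\alpha>n^{\alpha/(2-\alpha)}\gg\log n$, so the prefactor $n$ is absorbed into the exponent and this contribution is at most $C'\exp(-ct^\alpha)$ for some $c\in(0,\kappa\eta^\alpha)$.

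For the second term, I would center the truncated variables: put $m:=\bE[Y_1\ind\{|Y_1|\le T\}]=-\bE[Y_1\ind\{|Y_1|>T\}]$, whose modulus is exponentially small in $T^\alpha$ and thus satisfies $n|m|=o(t)$ under the hypothesis. It then suffices to control $\bP\bigl(\sum_i(Y_i\ind\{|Y_i|\le T\}-m)>t/2\bigr)$, a tail probability for a sum of independent centered random variables bounded by $2T$ with variance at most $\sigma^2=\bE[Y_1^2]$, via exponential Chebyshev.

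The main obstacle is that a plain Bernstein or Bennett bound, although it recovers the correct Gaussian tail in the range $t\lesssim\sqrt{n}$, falls into its linear regime with $T=\eta t$ and delivers only polynomial decay, which is insufficient for the stretched-exponential target. The classical remedy, going back to Nagaev, is to sharpen the Laplace transform estimate by combining the second-moment control with the stretched-exponential moment: for $\lambda$ in a suitable range (depending on $T$ and $\alpha$), one uses the inequality $\lambda y\le\kappa y^\alpha$ valid for $0\le y\le T$ whenever $\lambda\le\kappa T^{\alpha-1}$ (this is where the assumption $\alpha\le 1$ enters), to obtain an MGF bound of the schematic form
\[
\bE\!\left[e^{\lambda(Y_i\ind\{|Y_i|\le T\}-m)}\right]\;\le\;\exp\!\left(C\lambda^2\sigma^2\right)\cdot\bigl(1+C'\bE[e^{\kappa|Y_1|^\alpha}]\bigr),
\]
in which the Gaussian piece captures the bulk fluctuations and the factor on the right inherits the stretched-exponential tail. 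Optimizing $\lambda$ at the crossover scale $\lambda\sim t^{\alpha-1}$, and using that $t>n^{1/(2-\alpha)}$ is precisely the threshold at which the Gaussian contribution $t^2/n$ and the stretched-exponential one $t^\alpha$ coincide up to constants, yields $\bP(\sum_i(\cdots)>t/2)\le C\exp(-ct^\alpha)$, which combined with the bound on the big-jump piece proves the theorem. When $\alpha=1$ the MGF of $Y_1$ is itself finite, so no truncation is needed and Cram\'er's theorem can be applied directly, giving a simpler proof in that extreme case.
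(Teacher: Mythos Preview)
The paper does not actually prove this statement: it is quoted from Nagaev's work, with only the remark that the formulation here (a stretched-exponential moment condition rather than exact tail asymptotics) follows from the original via stochastic domination. Your outline reproduces the classical truncation argument behind that reference, so conceptually you are on the same track.

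There is, however, a concrete error in your MGF step. The displayed bound
\[
\bE\!\left[e^{\lambda(Y_i\1\{|Y_i|\le T\}-m)}\right]\le\exp(C\lambda^2\sigma^2)\cdot\bigl(1+C'\bE[e^{\kappa|Y_1|^\alpha}]\bigr)
\]
is fatal as written: raising it to the $n$-th power produces a factor $(1+C')^n=e^{cn}$, and for $\alpha<1$ one has $n\gg t^\alpha$ at the threshold $t=n^{1/(2-\alpha)}$, so this swamps the gain $e^{-\lambda t}$. What the argument needs is that the stretched-exponential moment enters only through the constant in front of $\lambda^2$, not as a multiplicative prefactor. Concretely: from $|e^x-1-x|\le\tfrac12 x^2e^{|x|}$ together with your own observation that $\lambda|y|\le\tfrac\kappa2|y|^\alpha$ on $\{|y|\le T\}$ whenever $\lambda\le\tfrac\kappa2 T^{\alpha-1}$, one gets
\[
\bE\!\left[e^{\lambda(Z_i-m)}\right]\le\exp\!\Bigl(\tfrac12\lambda^2\,\bE\bigl[Y_1^2 e^{(\kappa/2)|Y_1|^\alpha}\bigr]\Bigr)=:\exp(M\lambda^2),
\]
valid for all $0\le\lambda\le\tfrac\kappa2 T^{\alpha-1}$. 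Now the $n$-th power is $\exp(Mn\lambda^2)$; taking $\lambda=c\,t^{\alpha-1}$ with $c>0$ small enough yields $-\tfrac12\lambda t+Mn\lambda^2\le -c't^\alpha$ precisely under the hypothesis $t>n^{1/(2-\alpha)}$. With this correction your outline goes through.
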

In fact the result in \cite{anagaev} (which is also quoted in \cite[Equation (2.32)]{snagaev}) concerns centered random variables with variance $1$ and 
tail distribution equal to $\bP(Y_1>t) = \exp(-\kappa t^{\alpha})(1+o(1))$, for $t$ large enough, but one can easily deduce Theorem \ref{theo-nagaev} from it using stochastic domination.

To be able to use this result in our case, we need
to control the moments of the cross-term, and for that we use a recent result from \cite{AS20b}, which strengthens the previous bound of \cite{KMSS}.

\begin{theorem}[\cite{AS20b}] \label{prop.tworanges}
 Assume that $d\ge 5$. Let $\cR_\infty$ and $\widetilde \cR_\infty$ be the ranges of two
independent simple random walks on $\Z^d$.
There exists a constant $\kappa>0$, such that
\begin{equation*}
\bE\left[\exp\left(\kappa \cdot
|\cR_\infty\cap \widetilde \cR_\infty|^{1-\frac 2d} \right)\right]<\infty.
\end{equation*}
\end{theorem}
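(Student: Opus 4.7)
The goal is the tail bound $\bP(|\cR_\infty\cap\widetilde\cR_\infty|>L)\le C\exp(-\kappa L^{1-2/d})$ for $L\ge 1$; the stated exponential moment then follows by integration of tails. The plan is to recycle the capacity-based paradigm developed in this paper: convert ``large intersection'' into the event that both (independent) walks heavily occupy a common high-capacity region, and then apply Proposition~\ref{prop.cap.old} independently to each walk.

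On the event $\{|\cR_\infty\cap\widetilde\cR_\infty|>L\}$, a dyadic pigeonhole over length-scales $r$ and density-scales $\rho\in(0,1]$ (combined with a spatial annulus decomposition of $\Z^d$ to handle the unboundedness of $\cR_\infty$) produces some $r\ge 1$, $\rho$, and a $2r$-separated collection $\mathcal C\in\mathcal A(r)$ such that both walks have local time at least $\rho r^d$ on each $Q(x,r)$, $x\in\mathcal C$, and $|\mathcal C|\rho r^d\gtrsim L/\log L$. Restricting to the critical scale $\rho r^{d-2}\asymp\log L$ ensures condition \eqref{hyp.r} of Proposition~\ref{prop.cap.old} is met. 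Apply Lemma~\ref{lem-cap} to extract $U\subseteq\mathcal C$ with $|U|\asymp|\mathcal C|^{1-2/d}$ and $\cp(V)\gtrsim r^{d-2}|U|$, where $V:=\bigcup_{x\in U}Q(x,r)$, and then Proposition~\ref{prop.cap.old} independently to each walk to get
\[
\bP\!\left(\min_{x\in U}\ell^{(k)}_\infty(Q(x,r))\ge\rho r^d\text{ for }k=1,2\right)\,\le\,(Cn|U|)^{2|U|}\exp\!\bigl(-2\kappa\rho\,\cp(V)\bigr).
\]
With $\rho r^{d-2}\asymp\log L$ and $|U|\asymp|\mathcal C|^{1-2/d}$, the combinatorial prefactor is absorbed once $C_0$ in \eqref{hyp.r} is taken large enough. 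A direct substitution using $|\mathcal C|r^d\asymp L/(\rho\log L)$ unwinds the exponent as $\rho r^{d-2}|\mathcal C|^{1-2/d}\asymp \rho^{2/d}L^{1-2/d}$, which is of order $L^{1-2/d}$ in the natural regime $\rho\asymp 1$. A union bound over the $O((\log L)^c)$ dyadic parameters then yields the desired tail estimate.

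The main obstacle is the squared combinatorial prefactor $(Cn|U|)^{2|U|}$ coming from two applications of Proposition~\ref{prop.cap.old}: for a single walk Lemma~\ref{lem-cap} is already tight (as Proposition~\ref{prop-cap} shows), and for two walks the exponential gain also only doubles, so the accounting has to be done carefully to keep a residual factor large enough to absorb the prefactor. A secondary difficulty is the low-density regime $\rho\ll 1$, where the naive exponent $\rho^{2/d}L^{1-2/d}$ is too weak; here one argues separately that the corresponding event forces $|\mathcal C|\gg L^{1-2/d}$ and invokes a second-moment bound on $|\cR_\infty\cap\widetilde\cR_\infty|$ (of the type available from \cite{KMSS}) to rule it out at the correct rate. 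Finally, the spatial localization used to apply Proposition~\ref{prop.cap.old} (which has an $n$-dependence through the $(n|U|)^{2|U|}$ factor) should be set to the extent the walks must have reached to realize the event, which is polynomial in $L$ and introduces only $\log L$ factors that are harmlessly absorbed into $\kappa$.
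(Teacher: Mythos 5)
The paper does not supply a proof of Theorem~\ref{prop.tworanges}; it is imported from \cite{AS20b}, so there is no internal argument to compare against. Taking your proposal on its own merits, the high-level plan — pigeonhole $\cR_\infty\cap\widetilde\cR_\infty$ into $(r,\rho)$-levels, extract a small high-capacity subset $U$ via Lemma~\ref{lem-cap}, and apply Proposition~\ref{prop.cap.old} once per walk — is in the spirit of the paper's technology, but the central gap you flag is not actually closed by the fix you sketch. In the low-density regime $\rho\ll 1$, with $m\rho r^d\asymp L$ and $\rho r^{d-2}\asymp \log L$, the two applications of Proposition~\ref{prop.cap.old}, after absorbing both the squared combinatorial prefactor and the entropy of the choice of $U$ (each of order $\exp(|U|\log L)$), net an exponent of order $\rho\,\cp(V)\asymp \rho^{2/d}L^{1-\frac 2d}$, which degrades to nothing as $\rho\to 0$. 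The factor-of-two gain from independence is essentially cancelled by the entropy of $U$, so the accounting really does bottom out at $\rho^{2/d}L^{1-2/d}$, not $L^{1-2/d}$. Your suggested remedy — a second-moment bound of KMSS type — cannot rescue this: a variance estimate on $|\cR_\infty\cap\widetilde\cR_\infty|$ only yields polynomial decay in $L$, and no Chebyshev-type argument produces a stretched exponential. What is needed is a separate argument that the spread-out configuration ($|A|>L$ with small local density) is itself stretched-exponentially unlikely, which requires exploiting the independence of the two walks more finely than as a product of $U$-covering probabilities — for example by conditioning on one walk's range and controlling the intersection count as a sum of hitting indicators, as is done implicitly in the corrector analysis of Section~\ref{sec-trans}.

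A secondary issue is the $n$-dependence. Proposition~\ref{prop.cap.old} is stated for $\ell_n$, not $\ell_\infty$, and its prefactor $(n|\C|)^{|\C|}$ has no direct meaning for an infinite-time walk. You propose to set $n\sim\mathrm{poly}(L)$ on the grounds that the walks must have reached only polynomially far, but this is not automatic: the local time a transient walk accumulates in a fixed cube $Q(x,r)$ can in principle be contributed over arbitrarily many late returns, so a localization lemma (bounding the local time in the relevant cubes contributed after first exit from a box of side $\mathrm{poly}(L)$) is needed and should be stated, not just asserted. Neither of these obstructions is a surface-level detail; both are exactly the places where the single-walk estimate of Theorem~\ref{theo-Kn} does not transfer for free to the two-walk intersection.
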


\subsection{Proof of Theorem~\ref{theo-gauss}}\label{sec-gaussMDP}
We start with recalling a standard dyadic decomposition
for the volume of the range, which follows from using \eqref{king-2}
repeatedly along a dyadic scheme. For any $L\ge 1$, and $n\ge 2^L$,
\begin{equation}\label{UBd5-1}
|\cR_n|-\bE[|\cR_n|]=  \sum_{i=1}^{2^L} \left( |\cR_i^L| - \bE[ |\cR_i^L|] \right)
-\sum_{\ell=1}^{L}\sum_{i=1}^{2^{\ell-1}}
\left(|\cR_{2i-1}^\ell\cap \cR_{2i}^\ell | -\bE[|\cR_{2i-1}^\ell\cap \cR_{2i}^\ell |]\right),
\end{equation}
where for any fixed $1\le \ell\le L$,
the $\{\cR_i^\ell\}_{i=1,\dots,2^\ell}$, are independent
ranges of length $n2^{-\ell}$ (the time-length is not
exactly equal for each of them since we do not suppose that $n$ is of the form $n=2^K$, for some $K\ge 1$, but they
differ by at most one unit).

Let now $\{\zeta_n\}_{n\ge 0}$ be a sequence as in the statement of Theorem \ref{theo-gauss}, and let $L$ be the integer such that 
$2^{L-1} \le \zeta_n  <2^L$.

We first show that the intersection terms appearing in 
\eqref{UBd5-1} are negligible. Set $Y_i^\ell = |\cR_{2i-1}^\ell\cap \cR_{2i}^\ell | -\bE[|\cR_{2i-1}^\ell\cap \cR_{2i}^\ell |]$. 
Applying Theorems \ref{theo-nagaev} and \ref{prop.tworanges}, we get that for any $\delta>0$, and any $\ell \le L$, 
$$\limsup_{n\to \infty} \frac{n}{\zeta_n^2}\cdot \log \bP\left(\pm \sum_{i=1}^{2^\ell} Y_i^\ell \ge \frac{\delta\zeta_n}{L} \right)  = - \infty. $$ 
By using a union bound we also deduce 
$$\limsup_{n\to \infty} \frac{n}{\zeta_n^2}\cdot \log \bP\left(\pm \sum_{\ell = 1}^L \sum_{i=1}^{2^\ell} Y_i^\ell \ge \delta\zeta_n \right)  = - \infty. $$ 
Thus indeed the intersection terms in \eqref{UBd5-1} can be ignored, and we focus now on proving the Moderate Deviation Principle for the first sum.

For simplicity, let $Z_i:= |\cR_i^L| -\bE[ |\cR_i^L|]$. We apply Theorem \ref{theo-GE} with $X_n:= \frac{\pm 1}{\zeta_n}\sum_{i=1}^{2^L}Z_i$, and 
$b_n:=\zeta_n^2/n$. One has using independence, and the fact that $\frac{\zeta_n}{n}\cdot |Z_1|$ is bounded, 
\begin{equation*}
\bE[\exp(\theta b_n X_n]=
\Big(\bE[\exp(\theta \frac{\zeta_n}{n} Z_1] \Big)^{2^L}\\
= \Big(1+\frac{\theta^2}{2}\big(\frac{\zeta_n}{n}\big)^2\cdot
\bE[Z_1^2]+
\mathcal O\big(\big(\frac{\zeta_n}{n}\big)^3\cdot \bE[|Z_1|^3]\big) \Big)^{2^L}.
\end{equation*}
Note that $2^L\cdot \bE[Z_1^2]/n$ converges to $\sigma^2>0$, 
and that the fourth centered moment of $|\cR_n|$ 
is $\mathcal O(n^2 (\log n)^2)$, as proved by Le Gall in \cite{LG86}.
Thus, using that $\bE[|Z_1|^3]\le \bE[Z_1^4]^{3/4}$, we have
\[
\big(\frac{\zeta_n}{n}\big)^3\cdot \bE[|Z_1|^3]\le C
\big(\frac{\zeta_n\log n}{n}\big)^{3/2}.
\]
It follows that for any $\theta\in \R$,
\begin{equation}\label{GE-2}
\lim_{n\to\infty} \frac{n}{\zeta_n^2} \log \bE[
\exp\Big(\theta\frac{\zeta_n}{n} X_n\Big)=
\frac{\sigma^2}{2} \theta^2, 
\end{equation}
and one can then apply G\"artner--Ellis' Theorem, which concludes the proof of Theorem \ref{theo-gauss}.

\section*{Acknowledgments}
We thank Niccolo Torri for valuable help 
at an early stage of this project.
We acknowledge the support of the projects SWiWS (ANR-17-CE40-0032) and MALIN (ANR-16-CE93-0003).

\end{document}